\newtheorem{theorem}{Theorem}[section]
\newtheorem{lemma}[theorem]{Lemma}
\newtheorem{corollary}[theorem]{Corollary}
\newtheorem{conjecture}[theorem]{Conjecture}
\newtheorem{definition}[theorem]{Definition}
\newtheorem*{claim*}{Claim}
\newcommand{\textotherwise}{\text{otherwise}}
\newcommand{\RR}{\mathbb{R}}
\newcommand{\ZZ}{\mathbb{Z}}
\newcommand{\cT}{\mathcal{T}}
\DeclareMathOperator{\diag}{\mathrm{diag}}
\DeclarePairedDelimiter{\card}{\lvert}{\rvert}
\DeclarePairedDelimiter{\set}{\lbrace}{\rbrace}
\DeclarePairedDelimiter{\lrangle}{\langle}{\rangle}
\DeclarePairedDelimiter{\ceil}{\lceil}{\rceil}
\title{Comparison between the first Steklov eigenvalue and algebraic connectivity on trees}
\author{Huiqiu Lin\footnote{email: huiqiulin@126.com}}
\author{Da Zhao\footnote{email: zhaoda@ecust.edu.cn}~\orcidlink{0000-0002-9582-0778}}
\affil{School of Mathematics, East China University of Science and Technology, 130 Meilong Road, Shanghai 200237, China.}
\date{}
\begin{document}
\maketitle

\begin{abstract} 
    Trees can be regarded as discrete analogue of Hadamard manifolds, namely simply-connected Riemannian manifolds of non-positive sectional curvature. 
    In this paper, we compare the first (non-trivial) Steklov eigenvalue and algebraic connectivity of trees with prescribed number of boundary vertices and matching number.
    It is particularly noteworthy that while the extremal trees coincide for both operators, their corresponding eigenvalues differ significantly.
\end{abstract}


\section{Introduction}

Steklov~\cite{stekloff1902ProblemesFondamentauxPhysique, kuznetsov2014LegacyVladimirAndreevich} introduced the so called Steklov problem over a century ago during the study of liquid sloshing. 
Given a compact Riemannian manifold with boundary, the Steklov operator is defined to be the map which sends the Dirichlet boundary data of a harmonic function on the manifold to its Neumann boundary data. 
The Steklov operator establishes the model for inverse problem of detecting the inside of a body by measuring the data on the boundary. 
The eigenvalues of the Steklov operators are called Steklov eigenvalues. 
The relationship between Steklov eigenvalues and the Yamabe problem on Riemannian manifold with boundary was found by Escobar~\cite{escobar1992YamabeProblemManifolds}. 
In the last decade, Fraser and Scheon~\cite{fraser2011FirstSteklovEigenvalue, fraser2016SharpEigenvalueBounds, fraser2019ShapeOptimizationSteklov, fraser2020ResultsHigherEigenvalue} showed the relationship between Steklov eigenfunctions and free boundary minimal submanifolds in Euclidean ball and proceed on extremal problems of Steklov eigenvalues. 

Independently, Hua--Huang--Wang~\cite{hua2017FirstEigenvalueEstimates} and Hassannezhad--Miclo~\cite{asmahassannezhad2020HigherOrderCheeger} extended the Steklov problem to discrete spaces. 
On the other hand, Colbois--Girouard~\cite{girouard2014SpectralGapGraphs} constructed compact surfaces with first nontrivial Steklov eigenvalue uniformly bounded away from zero from expander graphs. 
Numerous studies investigated the properties of Steklov eigenvalues on graphs later. 
In~\cite{han2023SteklovEigenvalueProblem, asmahassannezhad2020HigherOrderCheeger,hua2017FirstEigenvalueEstimates, hua2023CheegerEstimatesDirichlettoneumann, perrin2019LowerBoundsFirst, perrin2020IsoperimetricUpperBound,tschanz2022UpperBoundsSteklov}, the authors controlled the Steklov eigenvalues by isoperimetrical parameters. 
He--Hua studied the upper bound of first Steklov eigenvalue on trees in~\cite{he2022UpperBoundsSteklov,he2022SteklovFlowsTrees}. 
In~\cite{linFirstSteklovEigenvalue2024}, the authors give an upper bound for the first Steklov eigenvalue on planar graphs and block graphs. 
In~\cite{lin2024UpperBoundsSteklov}, the authors provide an upper bound for the first Steklov eigenvalue on graphs of genus $g$. 
In~\cite{yu2024MonotonicitySteklovEigenvalues}, Yu--Yu proved the monotonicity of Steklov eigenvalues. 
Perrin provided some lower bounds of Steklov eigenvalues in~\cite{perrin2019LowerBoundsFirst}, and Shi--Yu generalized the bound to weighted graphs~\cite{shi2025ExtensionRigidityPerrins}. 
Shi--Yu compared the Steklov eigenvalue and the Laplacian eigenvalue in~\cite{shi2022ComparisonSteklovEigenvalues}. 
In~\cite{shi2022LichnerowicztypeEstimateSteklov}, the authors proved a Lichnerowicz--type estimate for Steklov eigenvalues. 
In~\cite{shi2022DirichlettoneumannMapsDifferential}, the authors established Steklov operators for differential forms on graphs. 
The survey paper \cite{colbois2023RecentDevelopmentsSteklov} summarizes recent progress on Steklov problem.

A graph is a pair $G = (V, E)$, where $V$ is the vertex set and $E \subseteq \binom{V}{2}$ is the edge set.
The edge $(x, y) \in E$ is sometimes abbreviated as $xy$ or $x \sim y$. 
Let $B$ be a subset of the vertex set, which we call the boundary of the graph. 
For a graph $G$ with boundary $B$, we can define the (discrete) Steklov operator $\Lambda: \RR^B \to \RR^B$ by $\Lambda(f) = \frac{\partial \hat{f}}{\partial n}$, where $\hat{f}$ is the harmonic extension of $f$ to the whole graph $V$, and $\frac{\partial \hat{f}}{\partial n}$ is the discrete normal derivative given by $\frac{\partial \hat{f}}{\partial n}(x) = \sum_{(x, y) \in E} (f(x) - f(y))$. 
The eigenvalues of $\Lambda$ are called Steklov eigenvalues of the pair $(G, B)$, denoted by 
\begin{align}
    0 = \sigma_1(G,B) \leq \sigma_2(G,B) \leq \cdots \leq \sigma_{\card{B}}(G, B). 
\end{align}
We write $\sigma_i$ for $\sigma_i(G, B)$ when the graph and the boundary are clear from context. 

A path of length $\ell$ in a graph is a sequence of distinct vertices $v_0 v_1 \cdots v_{\ell}$ such that $v_{i-1} \sim v_i$ for all $i = 1,2, \ldots, \ell$. 
The distance between two vertices $x, y \in V$ is the length of shorted path connecting them. 
The diameter $D$ of a graph $G$ is the maximum distance among all pairs of vertices. 
A graph $G$ is connected if for every pair of vertices $x, y \in V$, there exists a path connecting them. 
A tree is a minimal connected graph, namely it is a connected graph with $(\card{V} - 1)$ edges. 
The path graph $P_n$ is a graph consisting of a path with $n$ vertices. 
The degree of a vertex $v \in V$ is the number of edges adjacent to it, namely $\deg(v) = \card{\set{u \in V: u \sim v}}$. 
The vertices of degree $1$ in a tree are called leaves. 
We take the set of leaves as the natural boundary of a tree. 
A matching $M$ on a graph $G = (V, E)$ is a subset of the edges such that no two chosen edges share an end vertex. 
The (maximum) matching number of a graph $G$ is the maximum number of edges among all matchings on the graph $G$.
A perfect matching is a matching which occupies all vertices. 
Hall~\cite{hall1935RepresentativesSubsets} characterized perfect matching for bipartite graphs and Tutte~\cite{tutte1947FactorizationLinearGraphs} characterized perfect matching for general graphs. 

Trees can be regarded as discrete analogue of Hadamard manifolds, namely simply-connected Riemannian manifolds of non-positive sectional curvature. Establishing bounds for the Steklov eigenvalues of trees has attracted significant attention from researchers.
He--Hua studied the upper bound of first Steklov eigenvalue on trees in~\cite{he2022UpperBoundsSteklov,he2022SteklovFlowsTrees}, by using diameter $D$, they obtained a very useful upper bound of $\sigma_2$, that is $$\sigma_2\leq \frac{2}{D}. $$
The authors~\cite{lin2025MaximizeSteklovEigenvalue} determine the maximum Steklov eigenvalue on trees with given vertex number and leaf number or given vertex number and diameter. 
In~\cite{yu2022MinimalSteklovEigenvalues}, the authors characterized the minimal Steklov eigenvalues on trees. 
In this paper, we determine the maximal Steklov eigenvalues on trees with given matching number, which can be regarded as a continuation of the work in~\cite{he2022UpperBoundsSteklov,he2022SteklovFlowsTrees,yu2024MonotonicitySteklovEigenvalues,lin2025MaximizeSteklovEigenvalue}.

Let $n, m$ be integers such that $n \geq 2$ and $1 \leq m \leq n/2$. 
Let $\cT(n, m)$ be the set of trees with $n$ vertices and maximum matching number $m$. 

Our first main result is an upper bound of $\sigma_{2}(T)$ for $T \in \cT(n, m)$.

\begin{theorem}\label{thm:slope}
    Let $n, m$ be integers such that $n \geq 2$ and $1 \leq m \leq n/2$. 
    Let $T$ be a tree with $n$ vertices with maximum matching number $m$. 
    Then 
    \begin{align}
        \sigma_{2}(T) \leq 
        \begin{cases}
            1, & m = 1, \\
            \frac{n-2}{2n-5}, & m = 2, \\
            \frac{1}{2}, & m \geq 3.
        \end{cases}
    \end{align}
    For $m = 1$, the equality $\sigma_2(T) = 1$ holds if and only if $T$ is the star graph $S_n$. 
    For $m = 2$, the equality $\sigma_2(T) = \frac{n-2}{2n-5}$ holds if and only if $T$ is the crab graph $CG_{1, n-3; 1}$. 
    For $m \geq 3$, the equality $\sigma_2(T) = \frac{1}{2}$ holds if and only if $T$ is the spider graph $Sp_{m-1, n-2m+1; 2, 1}$. 
    See~\cref{sec:wide} for the definition of crab graph and spider graph. 
\end{theorem}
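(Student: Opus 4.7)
The plan is to proceed by case analysis on the matching number $m$, with the He--Hua diameter bound $\sigma_2(T) \leq 2/D$ serving as the main uniform tool. The combinatorial link from $m$ to $D$ is elementary but decisive: a tree with matching number $1$ has no two disjoint edges, forcing $D \leq 2$ and hence $T = S_n$; a tree with matching number $2$ satisfies $D \leq 4$ (else $P_6 \subseteq T$ would produce $3$ independent edges); and conversely any tree with $D \leq 3$ is a double star or smaller, whose matching is at most $2$, so $m \geq 3$ forces $D \geq 4$.

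In the case $m = 1$, the Steklov operator on $S_n$ is just $I - \tfrac{1}{n-1} J$ on the $(n-1)$-dimensional leaf space, immediately giving $\sigma_2(S_n) = 1$. In the case $m = 2$, I split by diameter. When $D = 3$, $T$ is a double star $D(s,t)$ with $s + t = n - 2$, and using the symmetric test function $f \equiv \alpha$ on one side's leaves and $f \equiv \beta$ on the other, subject to $s\alpha + t\beta = 0$, a direct calculation yields
\begin{align*}
\sigma_{2}(D(s,t)) = \frac{n-2}{st + n - 2},
\end{align*}
which is maximized by minimizing $st$; the unique optimizer $\{s,t\} = \{1, n-3\}$ then gives the claimed bound, attained by $CG_{1, n-3; 1} = D(1, n-3)$. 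When $D = 4$, He--Hua yields $\sigma_2 \leq 1/2$, and the inequality $(n-2)/(2n-5) > 1/2$ (trivial for all $n \geq 4$) rules this out as extremal. In the case $m \geq 3$, the combination $D \geq 4$ and He--Hua immediately gives $\sigma_2 \leq 1/2$; to see the bound is attained by $Sp_{m-1, n-2m+1; 2, 1}$, I would exhibit the antisymmetric test function $f(y_1) = 1$, $f(y_2) = -1$ at two ends of stems, zero on the other boundary vertices, whose harmonic extension satisfies $\hat{f}(c) = 0$ and $\hat{f}(x_i) = f(y_i)/2$, and directly verify $\Lambda f = \tfrac{1}{2} f$.

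The main obstacle is the equality characterization for $m \geq 3$. Trees of diameter $4$ are parametrized by a center $c$ with $p \geq 0$ pendant leaves and $q \geq 0$ stems (paths of length $2$), each stem carrying $\ell_j \geq 1$ terminal leaves, with matching number equal to $q + 1$ if $p \geq 1$ and $q$ if $p = 0$. The spider $Sp_{m-1, n-2m+1; 2, 1}$ arises from the uniform choice $p = n - 2m + 1$, $q = m - 1$, and all $\ell_j = 1$; the subtle claim is that any deviation from this configuration forces $\sigma_2 < 1/2$. The natural strategy is to decompose $\Lambda$ under the automorphism group acting on $(c; p\text{-pendants}; q\text{-stems})$ and, block by block, track how asymmetry between stem sizes or absence of pendant leaves disrupts the extremal eigenfunction above and strictly lowers the supporting Rayleigh quotient. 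An analogous but simpler refinement, leveraging that the bound $2/D$ is strictly better than $(n-2)/(2n-5)$, handles the strict inequality for diameter-$4$ trees in the $m = 2$ case.
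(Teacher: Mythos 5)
Your upper bounds are correct and follow essentially the paper's route: the matching number controls the diameter, and the bound $\sigma_2\le 2/D$ (which the paper derives by applying the Yu--Yu monotonicity lemma, \cref{lem:monotone}, to a diametral path) does the rest; your $m=2$ analysis, including the double-star computation and the observation that $\frac{n-2}{2n-5}>\frac12$ eliminates the diameter-$4$ trees, is complete. The genuine gap is the equality characterization for $m\ge 3$, which you yourself flag as ``the main obstacle'' and then only sketch. Proposing to ``decompose $\Lambda$ under the automorphism group and, block by block, track how asymmetry \dots strictly lowers the supporting Rayleigh quotient'' is a plan, not a proof: nothing in your write-up shows that a diameter-$4$ tree in which some neighbour of the center carries $\ell_j\ge 2$ terminal leaves has $\sigma_2<\frac12$. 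The paper closes this in two lines with the monotonicity lemma: any such tree contains the spider $Sp_{1,2;3,1}$ as a subtree, $\sigma_2(Sp_{1,2;3,1})\approx 0.42<\frac12$, hence $\sigma_2(T)\le\sigma_2(Sp_{1,2;3,1})<\frac12$; the surviving diameter-$4$ trees are exactly the spiders $Sp_{p_1,p_2;2,1}$, and the matching number then pins down the parameters. Your symmetry approach can be completed without monotonicity --- for two stems with $\ell_1,\ell_2$ leaves, the mean-zero test function equal to $1$ on the first stem's leaves and $-\ell_1/\ell_2$ on the second's, extended by $0$ at the center, has Rayleigh quotient at most $\bigl(\tfrac{\ell_2}{\ell_1+1}+\tfrac{\ell_1}{\ell_2+1}\bigr)/(\ell_1+\ell_2)$, which is $<\tfrac12$ unless $\ell_1=\ell_2=1$ --- but some such computation must actually appear for the ``only if'' direction to be proved.

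Two smaller points. First, your own parametrization of diameter-$4$ trees contains a configuration you should not dismiss as a ``deviation'': $p=0$, $q=m$, all $\ell_j=1$, i.e.\ the spider $Sp_{m;2}$, which occurs precisely when $n=2m+1$, lies in $\cT(2m+1,m)$, and by \cref{lem:spider} also has $\sigma_2=\tfrac12$. So the claim that any deviation from $Sp_{m-1,n-2m+1;2,1}$ forces $\sigma_2<\tfrac12$ is false as stated, and the uniqueness assertion needs this tree either excluded or admitted (the paper's proof passes over the same case). Second, in the $m=2$, $D=3$ step a test function only yields an upper bound for $\sigma_2(D(s,t))$; to conclude the exact value you should observe that the orthogonal complement of the side-symmetric functions consists of $1$-eigenfunctions (differences of leaves sharing a neighbour), or simply quote the crab-graph spectrum lemma as the paper does.
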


Let $b, m, k$ be integers such that $b \geq 2$, $m \geq 1$, and $b \geq k \geq 1$. 
Let $\widetilde{\cT}(b, m)$ be the set of trees with $b$ leaves and maximum matching number $m$. 

Our second main result is an upper bound of $\sigma_{2}(T)$ for $T \in \widetilde{\cT}(b, m)$.

\begin{theorem}\label{thm:fell}
    Let $b, m$ be integers such that $b \geq 2$ and $m \geq 1$. 
    Let $T$ be a tree with $b$ leaves and maximum matching number $m$. 
    Then
    \begin{align}
        \sigma_{2}(T) \leq
        \begin{cases}
            1, & m = 1, \\
            \frac{b}{2b-1}, & m = 2, \\
            \frac{2}{2m-1}, & b=2, m \geq 3, \\
            \frac{1}{2r + \frac{b - 1}{b}}, & b \geq 3, m = br+1, r \in \ZZ_{+}, \\
            \frac{2}{4r+3}, & b \geq 3, m = br+2, r \in \ZZ_{+}, \\
            \frac{1}{2r+2}, & b \geq 3, m = br+s, 3 \leq s \leq b, r \in \ZZ_{\geq 0}. 
        \end{cases}
    \end{align}

    For $m = 1$, the equality $\sigma_2(T) = 1$ holds if and only if $T$ is the star graph $S_{b+1}$. 
    For $m = 2$, the equality $\sigma_2(T) = \frac{b}{2b-1}$ holds if and only if $T$ is the crab graph $CG_{1, b-1; 1}$. 
    For $b = 2, m \geq 3$, the equality $\sigma_2(T) = \frac{2}{2m-1}$ holds if and only if $T$ is the path graph $P_{2m}$. 
    For $b \geq 3, m = br+1, r \in \ZZ_+$, the equality $\sigma_2(T) = \frac{1}{2r+\frac{b-1}{b}}$ holds if and only if $T$ is the crab graph $CG_{1, b-1; 2r} \cong Sp_{1, b-1; 2r+1, 2r}$. 
    For $b \geq 3, m = br+s, 3 \leq s \leq b, r \in \ZZ_{\geq 0}$, the spider graph $Sp_{2r+2, 2r+1; s-1, b-s+1} \in \widetilde{\cT}(b, br+s)$ achieves the equality $\sigma_2(T) = \frac{1}{2r+2}$. 
    The tree achieves the equality may not be unique, say both $Sp_{2r+2, 2r+1; b-1, 1}, Sp_{2r+2; b} \in \widetilde{\cT}(b, br+b)$ achieve the equality $\sigma_2(T) = \frac{1}{2r+2}$. 
\end{theorem}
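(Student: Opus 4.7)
I would combine two ingredients: an exact computation of $\sigma_2$ on every extremal tree listed in the theorem, and the He--Hua bound $\sigma_2(T)\le 2/D(T)$ applied to every other tree, together with a sharp diameter lower bound for $\widetilde{\cT}(b,m)$. The underlying simplification is that on any tree the discrete harmonic extension is affine along every path whose interior vertices all have degree~$2$; in particular, on a spider with centre $c$ and a leg of length $\ell$ ending at a leaf with value $\alpha$, the normal derivative at that leaf is $(\alpha-\gamma)/\ell$ with $\gamma=\hat f(c)$. This identity decouples the Steklov eigenvalue equations leg-by-leg on every extremal tree.

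Each extremal tree admits a large automorphism group permuting its isomorphic legs. Decomposing boundary functions into isotypic components splits $\Lambda$ into (i) antisymmetric modes vanishing at the centre, whose eigenvalue on length-$\ell$ legs is exactly $1/\ell$, with multiplicity one less than the number of isomorphic length-$\ell$ legs; and (ii) symmetric modes whose values on equal-length legs agree, which reduce to a small linear system in the leg-type averages $\alpha_i$ together with $\gamma$, with characteristic polynomial factoring as $\lambda$ times a polynomial of degree at most~$2$. Executing this on $S_{b+1}$, $CG_{1,b-1;1}$, $P_{2m}$, $Sp_{1,b-1;2r+1,2r}$ and $Sp_{2r+2,2r+1;s-1,b-s+1}$ yields $\sigma_2$ equal to $1$, $b/(2b-1)$, $2/(2m-1)$, $b/(2br+b-1)$ and $1/(2r+2)$ respectively, and a comparison with the antisymmetric $1/\ell$ candidates confirms that the listed value is indeed the minimum nonzero eigenvalue.

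For every other tree $T\in\widetilde{\cT}(b,m)$ I would prove the diameter lower bound $D(T)\ge D_{\min}(b,m)$: namely $D_{\min}=2m-1$ for $b=2$, and for $b\ge 3$ with $m=br+s$, $D_{\min}$ equals $4r+4$, $4r+3$ or $4r+1$ according as $s\ge 3$, $s=2$ or $s=1$. On spiders this is an elementary consequence of the matching formula $m=\sum_i\lfloor\ell_i/2\rfloor+\mathbf{1}[\text{some }\ell_i\text{ is odd}]$ together with $D=\ell_{b-1}+\ell_b$, obtained by maximizing the matching subject to a given diameter; for trees with more than one branch vertex a structural lemma shows that among all trees in $\widetilde{\cT}(b,m)$ the minimum diameter is realized by a spider, via a branch-merging argument that replaces two closest branch vertices by one without increasing $D$ or decreasing $m$. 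He--Hua's bound then gives $\sigma_2(T)\le 2/D_{\min}(b,m)$, which coincides with the claimed bound in the rows $b=2$, $b\ge 3$ with $s=2$, and $b\ge 3$ with $3\le s\le b$.

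The principal obstacle is the two remaining cases $m=2$ and $m=br+1$ with $b\ge 3$, where the claimed bound is strictly below $2/D_{\min}$, so I must handle the diameter-minimal non-extremal trees by a direct argument. For $m=2$ the diameter-$3$ trees are exactly the caterpillars $v_0v_1v_2v_3$ with $k_1$ and $k_2$ extra pendants on $v_1$ and $v_2$; a direct $2\times 2$ Steklov calculation gives $\sigma_2=b/((k_1+1)(k_2+1)+b)$, which is uniquely maximized at $(k_1,k_2)=(0,b-2)$, i.e., at the crab $CG_{1,b-1;1}$. For $m=br+1$, the diameter-$(4r+1)$ family contains non-spider trees in addition to $Sp_{1,b-1;2r+1,2r}$ (for instance the tree obtained by linking two centres each carrying a bundle of length-$2r$ legs by a single edge); for these I would compute $\sigma_2$ by the same symmetry decomposition as above and verify the strict inequality $\sigma_2<b/(2br+b-1)$, possibly invoking the monotonicity of Steklov eigenvalues of Yu--Yu~\cite{yu2024MonotonicitySteklovEigenvalues} to conclude that any non-spider in this class can be replaced by a spider with strictly larger $\sigma_2$, pinning the extremum at $Sp_{1,b-1;2r+1,2r}$.
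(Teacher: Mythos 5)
Your overall architecture coincides with the paper's: sharp diameter lower bounds for trees in $\widetilde{\cT}(b,m)$, the bound $\sigma_2(T)\le 2/D$ (which the paper obtains from the subtree monotonicity of \cref{lem:monotone} applied to a diametral path), and a direct spectral computation on crab graphs in the two boundary cases $m=2$ and $m=br+1$ where the claimed bound is strictly below $2/D_{\min}$. Your exact values of $\sigma_2$ on the extremal trees all agree with \cref{lem:spider} and the crab eigenvalue formula $\sigma_2(CG_{b_1,b_2;r})=\tfrac{b_1+b_2}{b_1b_2+r(b_1+b_2)}$, and your table of $D_{\min}(b,m)$ matches \cref{lem:visitor,lem:surface,lem:material}.

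There are, however, two genuine gaps in how you propose to establish the combinatorial backbone. First, the branch-merging lemma as you state it --- identify two closest branch vertices ``without increasing $D$ or decreasing $m$'' --- is false: identifying the two centres of $CG_{2,2;1}$ yields the star $S_5$, dropping the matching number from $2$ to $1$. The conclusion you want (that the minimum diameter over $\widetilde{\cT}(b,m)$ is realized by a spider) may still hold, but this operation does not prove it; the paper instead proves the diameter bounds for \emph{arbitrary} trees directly, by fixing a diametral path and bounding the matching edges contributed by the path and by each of the remaining $b-2$ leaf-to-path branches, with a case analysis near the midpoint (\cref{lem:visitor,lem:surface,lem:material}). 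Second, in the case $m=br+1$ you must show that the diameter-$(4r+1)$ trees in $\widetilde{\cT}(b,br+1)$ are \emph{exactly} the crabs $CG_{b_1,b-b_1;2r}$ (this is \cref{lem:recent}); your proposed fallback via Yu--Yu monotonicity cannot substitute for this classification, since the competing diameter-minimal trees all have the same vertex count $2rb+2$ and none is a proper subtree of another, so monotonicity yields no comparison among them. Once the classification is in hand, the computation $\sigma_2(CG_{b_1,b-b_1;2r})=\tfrac{b}{b_1(b-b_1)+2rb}\le\tfrac{b}{2rb+b-1}$ closes that case exactly as in the paper. (A minor point: for $m=2$ you should also dispose of the diameter-$4$ trees with matching number $2$, e.g.\ $Sp_{1,2;3,1}$, via $\sigma_2\le\sigma_2(P_5)=\tfrac12<\tfrac{b}{2b-1}$; your $2/D$ step covers this, but your sentence restricting attention to diameter-$3$ caterpillars should not suggest those are all the trees with $m=2$.)
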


\begin{conjecture}
    Let $b, r$ be integers such that $b \geq 2$ and $r \geq 1$. 
    Let $T$ be a tree with $b$ leaves and maximum matching number $br+2$. 
    Then $\sigma_{2}(T) \leq \sigma_{2}^-(ES_{b, 2r})$ where 
    \begin{align}
        \sigma_{2}^{\pm}(ES_{b; p}) = \frac{2bp + 3b - 3 \pm \sqrt{b^2-2 b+9}}{2(bp^2 + 3bp - 3p + 2b - 4)}.
    \end{align} 
    The equality holds if and only if $T$ is the extra special graph $ES(b, 2r)$.
    See~\cref{sec:wide} for the definition of extra special graph. 
\end{conjecture}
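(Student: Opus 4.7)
The conjecture refines the case $b \geq 3$, $m = br+2$ of \cref{thm:fell}, for which that theorem gives only $\sigma_{2}(T) \leq \frac{2}{4r+3}$ without characterizing equality. My plan is a two-step argument: (i) compute $\sigma_{2}(ES_{b, 2r})$ explicitly and verify that it equals $\sigma_{2}^{-}(ES_{b; 2r})$, and (ii) show that no other tree in $\widetilde{\cT}(b, br+2)$ achieves a larger value.

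For step (i), I would exploit the cyclic symmetry that permutes the $b$ pendant arms of $ES_{b, 2r}$. Under this action, $\RR^{V}$ decomposes into a one-dimensional trivial component and its $(b-1)$-dimensional orthogonal complement. The first non-trivial Steklov eigenvalue must sit in the latter sector, where any harmonic extension vanishes at every spine vertex fixed by the symmetry. Along a single arm parametrized by $p = 2r$ steps, the harmonic equation reduces to a linear two-term recurrence, and combining the Steklov boundary condition at the leaf with the vanishing condition at the symmetry-fixed endpoint produces a single quadratic equation in $\sigma$. Its two roots are exactly $\sigma_{2}^{\pm}(ES_{b; 2r})$, and the smaller root is $\sigma_{2}(ES_{b, 2r})$. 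The presence of $\sqrt{b^{2} - 2b + 9}$ and of a quadratic polynomial in $p$ in the denominator of the conjectured formula are both consistent with this derivation.

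For step (ii), I would apply the Rayleigh characterization
\begin{equation}
    \sigma_{2}(T) = \min_{\substack{f \colon V \to \RR \\ f|_{B} \not\equiv 0,~ \sum_{x \in B} f(x) = 0}} \frac{\sum_{xy \in E} (f(x) - f(y))^{2}}{\sum_{x \in B} f(x)^{2}}
\end{equation}
together with the Steklov flow viewpoint of He--Hua, and then carry out a sequence of local moves (edge-grafting, arm-shifting, path-balancing) on $T$ that preserve both the leaf count $b$ and the matching number $br + 2$. Each elementary move should be either neutral or strictly $\sigma_{2}$-increasing, and iteration should terminate at $ES_{b, 2r}$. The hardest part is not the eigenvalue calculation but this reduction: in contrast with \cref{thm:slope}, where one is free to modify the leaf count as long as $n$ and $m$ are preserved, here requiring $b$ to stay fixed together with the rigid condition $m = br + 2$ rules out many standard transformations, since rerouting an edge in a tree typically shifts $m$ by $\pm 1$. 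A careful bookkeeping of how a maximum matching decomposes at each branching vertex, combined with direct Rayleigh-quotient comparisons using test functions built from the conjectured extremal eigenfunction on $ES_{b, 2r}$, will likely be needed to rule out the near-extremal configurations; exhausting these borderline cases is the main obstacle.
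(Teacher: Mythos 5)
The statement you are addressing is presented in the paper as a \emph{conjecture}: the authors give no proof of it, so there is no argument of theirs to compare against, and the only question is whether your proposal actually closes the problem. It does not. Step (ii) is an outline rather than an argument: the local moves (edge-grafting, arm-shifting, path-balancing) are never defined, no monotonicity of $\sigma_2$ under any of them is established under the constraints that both $b$ and $m = br+2$ stay fixed, and you yourself flag the termination and borderline analysis as the main unresolved obstacle. Since that reduction is precisely the content of the conjecture beyond what \cref{thm:fell} already provides (the weaker bound $\tfrac{2}{4r+3}$ with no equality characterization), the conjecture remains open after your proposal.

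Step (i) also contains a concrete error, even though its conclusion is true and is already proved in the paper as \cref{lem:extendedstar} by directly exhibiting the eigenfunctions. By \cref{def:extraspecial}, $ES_{b;p} = Sp_{1,1,b-2;\,p+2,p+1,p}$: one arm of length $p+2$, one of length $p+1$, and $b-2$ arms of length $p$. There is therefore no cyclic symmetry permuting all $b$ pendant arms, only an $S_{b-2}$ action on the equal-length arms. Moreover, in the sector where the harmonic extension vanishes at the centre, the extension is linear on each arm, and the Steklov condition at the leaf of an arm of length $\ell$ forces $\sigma = 1/\ell$; since the three arm lengths are distinct, that sector contributes only the eigenvalue $\tfrac{1}{p}$ with multiplicity $b-3$, not $\sigma_{2}^{\pm}$. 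The roots $\sigma_{2}^{\pm}(ES_{b;p})$ arise instead from the $S_{b-2}$-invariant quotient (a three-armed weighted spider with arm lengths $p+2$, $p+1$, $p$), where the eigenfunction is nonzero at the centre --- this is visible in the paper's explicit eigenfunctions, whose values $f_1$, $f_2$ and $1 - p\sigma_m$ on the two special arms and at the centre are generically nonzero. So the quadratic produced by your setup would not be the one whose roots are $\sigma_{2}^{\pm}$, and step (i) as written does not recover the claimed formula.
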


For higher Steklov eigenvalues, the maximum value is always $1$. 

\begin{theorem}\label{thm:older}
    Let $b, m, k$ be integers such that $b \geq k \geq 3$ and $m \geq 1$. 
    Let $T$ be a tree with $b$ leaves and maximum matching number $m$.
    Then
    \begin{align}
        \sigma_{k}(T) \leq 1.
    \end{align}
    The spider graph $Sp_{1, b-1; 2m-1, 1} \in \widetilde{\cT}(b, m)$ achieves the equality $\sigma_k(T) = 1$.
\end{theorem}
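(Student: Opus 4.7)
My plan rests on two observations. The first, which is slightly stronger than the theorem asks, is that $\|\Lambda\|_{\mathrm{op}} \leq 1$ for any graph whose boundary consists of degree-one vertices. To see this, let $f \in \RR^B$ have harmonic extension $\hat f$, and for each leaf $v \in B$ write $u_v$ for its unique neighbor. Then
\begin{align*}
    \langle \Lambda f, f \rangle = \sum_{xy \in E}(\hat f(x) - \hat f(y))^2 \geq \sum_{v \in B}(\hat f(v) - \hat f(u_v))^2 = \|\Lambda f\|^2,
\end{align*}
the inequality holding because $\{vu_v : v \in B\}$ is a set of pairwise distinct edges in $E$. Combining with Cauchy--Schwarz $\langle \Lambda f, f \rangle \leq \|\Lambda f\|\,\|f\|$ gives $\|\Lambda f\| \leq \|f\|$, and since $\Lambda$ is symmetric and positive semi-definite this implies $\sigma_k(T) \leq 1$ for every $k$.

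For the second step, I check that the spider $S := Sp_{1, b-1; 2m-1, 1}$ attains equality for $k \geq 3$. Write $c$ for the center, $w_1, \dots, w_{b-1}$ for its pendant leaves, and $c = p_0 \sim p_1 \sim \cdots \sim p_{2m-1}$ for the long leg (so $p_{2m-1}$ is the long leaf). I first verify $S \in \widetilde{\cT}(b, m)$: the matching $\{cp_1, p_2p_3, \dots, p_{2m-2}p_{2m-1}\}$ has size $m$, and a short case analysis on whether a candidate matching uses an edge incident to $c$ confirms this is maximum. Then for each $i \in \{2, \dots, b-1\}$ I set $f_i := e_{w_1} - e_{w_i}$ and claim that $\hat f_i \equiv 0$ on every non-boundary vertex: harmonicity on the long leg reduces to the zero-boundary Dirichlet problem there, and at $c$ one verifies $\deg(c)\hat f_i(c) = f_i(w_1) + f_i(w_i) + 0 = 0$. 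Consequently $\Lambda f_i = f_i$, yielding $b - 2$ linearly independent eigenvectors with eigenvalue $1$. Combined with the upper bound, these force $\sigma_3(S) = \sigma_4(S) = \cdots = \sigma_b(S) = 1$.

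Neither step looks hard: the upper bound is a short operator-theoretic estimate, and the spider is purpose-built so that the $b-1$ pendants sharing the neighbor $c$ generate the required $1$-eigenspace through their pairwise differences. If any delicacy arises, it is notational---cleanly presenting the simultaneous extension-vanishing argument on the long leg---rather than mathematical.
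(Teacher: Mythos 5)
Your proposal is correct and follows essentially the same route as the paper: the paper's proof simply combines the general bound $\sigma_k(T)\leq 1$ for trees with leaves as boundary (cited from earlier work) with the explicit computation of the spectrum of the spider graph via eigenfunctions supported on pairs of legs (Lemma~\ref{lem:spider}), which is exactly what your difference vectors $e_{w_1}-e_{w_i}$ reproduce. The only difference is that you supply a short self-contained energy/operator-norm argument for the upper bound $\norm{\Lambda}\leq 1$ where the paper cites the literature; that argument is sound, since the edges $vu_v$ for distinct leaves $v$ are pairwise distinct in any tree with at least three leaves.
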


For a graph $G = (V, E)$, we can define the (discrete) Laplacian operator $L: \RR^V \to \RR^V$ by $L(f) = \sum_{xy \in E} (f(x) - f(y))$. 
The eigenvalues of $L$ are called the Laplacian eigenvalues of $G$, denoted by 
\begin{align}
    0 = \lambda_1(G) \leq \lambda_2(G) \leq \cdots \lambda_{|V|}(G). 
\end{align}
We write $\lambda_i$ for $\lambda_i(G)$ when the graph is clear from context. 
The second Laplacian eigenvalue $\lambda_2$ is also known as the algebraic connectivity $a(G)$. 

Our third main result is an upper bound of $\lambda_{2}(T)$ for $T \in \cT(n, m)$.

\begin{theorem}\label{thm:ranch}
    Let $n, m$ be integers such that $n \geq 2$ and $1 \leq m \leq n/2$. 
    Let $T$ be a tree with $n$ vertices with maximum matching number $m$.  
    Then 
    \begin{align}
        \lambda_{2, \text{max}}(n, m) \leq 
        \begin{cases}
            1, & m = 1, \\
            \lambda_2(CG_{1, n-2; 1}), & m = 2, \\
            4 \sin^2 \frac{\pi}{10}, & m \geq 3,
        \end{cases}
    \end{align}
    where $\lambda_2(CG_{1, n-2, 1})$ is the smallest root of $x^3 - (n+3) x^2 + (3n+1) x - n - 1 = 0$.
    For $m = 1$, the equality $\lambda_2(T) = 1$ holds if and only if $T$ is the star graph $S_n$. 
    For $m = 2$, the equality $\sigma_2(T) = \lambda_2(CG_{1, n-2; 1})$ holds if and only if $T$ is the crab graph $CG_{1, n-3; 1}$. 
    For $m \geq 3$, the equality $\sigma_2(T) = 4 \sin^2 \frac{\pi}{10}$ holds if and only if $T$ is the spider graph $Sp_{m-1, n-2m+1; 2, 1}$. 
\end{theorem}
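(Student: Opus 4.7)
The plan is to proceed by case analysis on the matching number $m$, paralleling the structure of the proof of \cref{thm:slope}. The key tools will be Fiedler's theory of the algebraic connectivity on trees (characteristic vertex/edge and bottleneck-matrix formulation) together with the structural classification of trees of prescribed matching number.

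\textbf{Cases $m = 1$ and $m = 2$.} For $m = 1$, any tree with matching number $1$ cannot contain two vertex-disjoint edges, which forces it to be a star $S_n$; direct diagonalization of $L(S_n)$ yields $\lambda_2(S_n) = 1$. For $m = 2$, one enumerates the finite list of structural types: double stars (diameter $3$) and crab-type spiders formed by attaching pendant leaves and at most one length-$2$ arm to a common center (diameter $4$). A direct computation of the Laplacian characteristic polynomial of each candidate, followed by a monotonicity comparison along the family, identifies the crab graph $CG_{1,n-3;1}$ as the unique maximizer; the resulting characteristic polynomial restricted to the Fiedler-vector block yields the stated cubic $x^3 - (n+3)x^2 + (3n+1)x - (n+1) = 0$.

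\textbf{Case $m \geq 3$.} I would first verify that $\lambda_2(Sp_{m-1, n-2m+1; 2, 1}) = 4\sin^2(\pi/10)$. Using the rotational symmetry of the spider, the Fiedler eigenspace decomposes according to the action of the automorphism group: taking a vector antisymmetric across two of the length-$2$ arms and zero elsewhere reduces the problem to the second Laplacian eigenvalue of the $P_5$ spanned by those arms through the center, giving $\lambda_2 = \lambda_2(P_5) = 2(1-\cos(\pi/5)) = 4\sin^2(\pi/10)$. For the upper bound, observe that every $T \in \cT(n, m)$ with $m \geq 3$ has diameter at least $4$ (since trees of diameter at most $3$ are double stars, which have matching number at most $2$), so $T$ contains $P_5$ as a geodesic subpath. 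A Fiedler-theoretic monotonicity argument, implemented through comparison of bottleneck matrices corresponding to the branches hanging off this $P_5$-axis, then shows that any proper structural extension of the extremal spider strictly decreases $\lambda_2$.

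\textbf{Main obstacle.} The technical heart is this last monotonicity step: proving that in $\cT(n, m)$ with $m \geq 3$, the spider $Sp_{m-1, n-2m+1; 2, 1}$ uniquely maximizes $\lambda_2$. Unlike the Steklov setting, Laplacian eigenvalues do not obey clean pendant-monotonicity under simple vertex operations, so one must treat separately the Type I (characteristic-vertex) and Type II (characteristic-edge) Fiedler configurations. For each, the comparison reduces to showing that the Perron value of an appropriate bottleneck matrix strictly exceeds the one attached to the spider's length-$2$ arm whenever the tree deviates from $Sp_{m-1, n-2m+1; 2, 1}$; tracking the equality cases and pinning down the uniqueness claim requires a careful perturbation argument along the moduli of trees in $\cT(n, m)$ adjacent to the extremal spider.
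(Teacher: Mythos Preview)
Your central misconception is the claim that ``unlike the Steklov setting, Laplacian eigenvalues do not obey clean pendant-monotonicity under simple vertex operations.'' In fact they do: if $T'$ is any subtree of $T$, then $\lambda_i(T) \le \lambda_i(T')$ for all $i \le |V(T')|$. The proof is one line of Rayleigh-quotient manipulation---extend a test function $f$ on $V(T')$ to $\widetilde{f}$ on $V(T)$ by making it constant on each connected component of $T - E(T')$; then the Dirichlet energy is unchanged (no new edge contributes) while the $\ell^2$-norm can only grow. This is exactly the Laplacian analogue of the Steklov monotonicity lemma used in \cref{thm:slope}, and the paper records it as \cref{lem:naturally}.

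Once you have this lemma, the entire Fiedler bottleneck-matrix apparatus is unnecessary, and the proof of \cref{thm:ranch} becomes word-for-word parallel to that of \cref{thm:slope}. For $m \ge 3$: if the diameter is at least $5$, then $T \supseteq P_6$ and $\lambda_2(T) \le \lambda_2(P_6) < \lambda_2(P_5)$; if the diameter is $4$ and $T$ contains $Sp_{1,2;3,1}$, then $\lambda_2(T) \le \lambda_2(Sp_{1,2;3,1}) \approx 0.32 < \lambda_2(P_5) \approx 0.38$; if the diameter is $4$ and $T$ does \emph{not} contain $Sp_{1,2;3,1}$, a short structural argument (neither end of a diametral path has a twin) forces $T = Sp_{p_1,p_2;2,1}$, and the matching constraint pins it to $Sp_{m-1,n-2m+1;2,1}$. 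The extremal value $\lambda_2(Sp_{m-1,n-2m+1;2,1}) = \lambda_2(P_5)$ itself follows from two applications of the same monotonicity (it is sandwiched between $\lambda_2(Sp_{n-m;2})$ and $\lambda_2(P_5)$, both equal to $4\sin^2(\pi/10)$). The $m=2$ case likewise reduces to comparing $\lambda_2(P_5)$ with $\lambda_2(CG_{1,n-3;1})$ and a monotonicity of $\lambda_2(CG_{p,q;1})$ along $p+q = n-2$, which the paper handles by a characteristic-polynomial comparison. Your proposed route through Type I/Type II characteristic vertices and Perron values of bottleneck matrices would eventually work, but it is a substantial detour around a one-line lemma.
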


\cref{thm:ranch} can be regarded as a refinement of~\cite[Theorem 1.3]{zhu2013AlgebraicConnectivityGraphs}.

Our second main result is an upper bound of $\lambda_{2}(T)$ for $T \in \widetilde{\cT}(b, m)$.

\begin{theorem}\label{thm:unit}
    Let $b, m$ be integers such that $b \geq 2$ and $m \geq 1$. 
    Let $T$ be a tree with $b$ leaves and maximum matching number $m$. 
    Then
    \begin{align}
        \sigma_{2}(T) \leq
        \begin{cases}
            1, & m = 1, \\
            \lambda_2(CG_{1, b-1, 1}), & m = 2, \\
            4 \sin^2 \frac{\pi}{4m}, & b=2, m \geq 3, \\
            \lambda_2(CG_{1, b-1; 2r}), & b \geq 3, m = br+1, r \in \ZZ_{+}, \\
            4 \sin^2 \frac{\pi}{8r+8}, & b \geq 3, m = br+2, r \in \ZZ_{+}, \\
            4 \sin^2 \frac{\pi}{8r+10}, & b \geq 3, m = br+s, 3 \leq s \leq b, r \in \ZZ_{\geq 0},
        \end{cases}
    \end{align}
    where $\lambda_2(CG_{1, b-1, 1})$ is the smallest root of $x^3 - (b+4) x^2 + (3b+4) x - b - 2 = 0$.

    For $m = 1$, the equality $\sigma_2(T) = 1$ holds if and only if $T$ is the star graph $S_{b+1}$. 
    For $m = 2$, the equality $\sigma_2(T) = \lambda_2(CG_{1, b-1, 1})$ holds if and only if $T$ is the crab graph $CG_{1, b-1; 1}$. 
    For $b = 2, m \geq 3$, the equality $\sigma_2(T) = 4 \sin^2 \frac{\pi}{4m}$ holds if and only if $T$ is the path graph $P_{2m}$. 
    For $b \geq 3, m = br+1, r \in \ZZ_+$, the equality $\sigma_2(T) = \lambda_2(CG_{1,b-1;2r})$ holds if and only if $T$ is the crab graph $CG_{1, b-1; 2r} \cong Sp_{1, b-1; 2r+1, 2r}$. 
    For $b \geq 3, m = br+s, 3 \leq s \leq b, r \in \ZZ_{\geq 0}$, the spider graph $Sp_{s-1, b-s+1; 2r+2, 2r+1} \in \widetilde{\cT}(b, br+s)$ achieves the equality $\sigma_2(T) = 4 \sin^2 \frac{\pi}{8r+10}$. 
    The tree achieves the equality may not be unique, say both $Sp_{b-1, 1; 2r+2, 2r+1}, Sp_{b; 2r+2} \in \widetilde{\cT}(b, br+b)$ achieve the equality $\sigma_2(T) = 4 \sin^2 \frac{\pi}{8r+10}$. 
\end{theorem}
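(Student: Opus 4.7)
The plan is to bound $\lambda_2(T) = a(T)$, the algebraic connectivity, on $T \in \widetilde{\cT}(b,m)$ (the sentence introducing the theorem announces $\lambda_2(T)$, and the closed-form right-hand sides $\lambda_2(CG_{1,b-1,1})$ and $4\sin^2(\pi/\cdot)$ are path-Laplacian quantities in the pattern of \cref{thm:ranch}; the $\sigma_2$ in the displayed inequalities is a typographical slip). My strategy mirrors the architecture of the Steklov analog \cref{thm:fell}: structural grafting arguments reduce any extremal $T$ to one of the named spider or crab graphs, and then $\lambda_2$ of that graph is computed explicitly by exploiting its automorphism symmetry.

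The small cases are quick. For $m=1$, any tree with matching number $1$ is the star $S_{b+1}$ with $\lambda_2 = 1$. For $m=2$, a short case analysis places the two matching edges and forces $T = CG_{1,b-1;1}$; the cubic $x^3 - (b+4)x^2 + (3b+4)x - b - 2 = 0$ falls out of the Laplacian characteristic polynomial after peeling off eigenvalue $1$ of multiplicity $b-2$ coming from the $b-1$ symmetric pendant leaves. For $b=2$ the tree is a path $P_{2m}$, whose algebraic connectivity is the classical $4\sin^2(\pi/4m)$.

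For the main cases $b \geq 3$, $m \geq 3$, take a Fiedler vector $f$ of an extremal $T$ and apply Fiedler's dichotomy, obtaining a characteristic vertex $v^{*}$ (Type I) or a characteristic edge (Type II). The engine is a pendant-branch grafting lemma of Guo--Wang type: transferring a pendant subtree from a vertex where $|f|$ is small to one where $|f|$ is larger weakly increases $\lambda_2$. I would design a sequence of such moves that preserves both the leaf count $b$ and the matching number $m$, iterating to symmetrise the branches at $v^{*}$ down to at most two distinct leg lengths. The resulting graph is a spider $Sp_{a_1,a_2;\ell_1,\ell_2}$, and the arithmetic $m = br+s$ then forces $(a_1,a_2,\ell_1,\ell_2) = (s-1, b-s+1, 2r+2, 2r+1)$. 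The subcases $m = br+1$ and $m = br+2$ collapse analogously to the crab $CG_{1,b-1;2r}$ and to a balanced spider with leg lengths $2r+1, 2r+2$.

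On the reduced spider, the automorphism group $S_{a_1} \times S_{a_2}$ permuting equal legs block-diagonalises the Laplacian into isotypic components; the block carrying $\lambda_2$ lives on an antisymmetric orbit and coincides with the Laplacian of a single path of combinatorial length $4r+5$ obtained by concatenating two legs through $v^{*}$. Its smallest nonzero eigenvalue is $4\sin^2(\pi/(8r+10))$, giving the stated bound, and analogous reductions yield $4\sin^2(\pi/(8r+8))$ and the crab-graph cubic in the other subcases. The hard part will be the matching-controlled grafting step: the matching number is not monotone under arbitrary branch-moves because relocating a pendant edge can simultaneously create and destroy augmenting paths, so establishing a valid Guo--Wang-type move inside $\widetilde{\cT}(b,m)$ likely requires a two-stage reduction -- first collapsing pendant ears of length $\leq 2$ to enforce leg lengths in $\{2r+1, 2r+2\}$, then balancing the $b$ long legs -- together with a K\"onig-type audit of alternating paths at each step to certify that $m$ is preserved.
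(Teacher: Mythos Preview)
Your approach is workable in spirit but takes a substantially harder route than the paper, and has one genuine slip.

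The paper does \emph{not} use Fiedler vectors or grafting at all. Its engine is a pair of elementary observations. First, the matching number forces a lower bound on the diameter: the combinatorial lemmas (\cref{lem:visitor}, \cref{lem:surface}, \cref{lem:material}, \cref{lem:recent}) show that a tree with $b$ leaves and matching number $m=br+s$ has diameter at least $4r+1$, $4r+3$, or $4r+4$ depending on $s$. Second, subtree monotonicity for Laplacian eigenvalues (\cref{lem:naturally}) immediately gives $\lambda_2(T)\le\lambda_2(P_{D+1})$ whenever $T$ has diameter $D$. That single inequality yields the path-type bounds $4\sin^2(\pi/(8r+8))$ and $4\sin^2(\pi/(8r+10))$ with no further work. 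Only the borderline case $m=br+1$ needs more: there \cref{lem:recent} pins the structure to a crab graph $CG_{b_1,b-b_1;2r}$, and a direct characteristic-polynomial comparison across $b_1$ (together with the bound $\lambda_2(CG_{1,b-1;2r})>\lambda_2(P_{4r+3})$) finishes. No grafting, no matching-preservation audit.

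Your grafting programme would also have to confront the step you flagged as hard, and it really is hard: a Guo--Wang move that preserves both $b$ and $m$ simultaneously is not available off the shelf, and your two-stage sketch does not yet supply one. The paper's diameter argument sidesteps this entirely.

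One concrete slip: in the $m=2$ case you write that the structure ``forces $T=CG_{1,b-1;1}$''. It does not; any double star $CG_{b_1,b-b_1;1}$ has $b$ leaves and matching number $2$ (e.g.\ $CG_{2,2;1}$ for $b=4$). The paper needs, and proves, the comparison $\lambda_2(CG_{b_1,b-b_1;r})\le\lambda_2(CG_{1,b-1;r})$ via an explicit polynomial difference $\Phi(Q_{b_1+1,b_2-1;r})-\Phi(Q_{b_1,b_2;r})=(b_2-b_1-1)\Phi(P_r)^2\ge 0$. Your argument is missing this step.

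Your eigenvalue computation on the spider via the $S_{a_1}\times S_{a_2}$ symmetry is essentially the same as the paper's equitable-partition analysis, so that piece is fine.
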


\begin{conjecture}
    Let $b, r$ be integers such that $b \geq 2$ and $r \geq 1$. 
    Let $T$ be a tree with $b$ leaves and maximum matching number $br+2$. 
    Then $\sigma_{2, \text{max}}(T) \leq \lambda_{2}^-(ES_{b, 2r})$ for $b \geq 3, m = br+2, r \in \ZZ_{+}$.
    The equality holds if and only if $T$ is the extra special graph $ES(b, 2r)$.
\end{conjecture}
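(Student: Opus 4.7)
The plan is to adapt the strategy used in \cref{thm:unit} to the residue class $m \equiv 2 \pmod b$, which is precisely the case left open there. \cref{thm:unit} already establishes the bound $\lambda_2 \leq 4\sin^2 \frac{\pi}{8r+8}$ via the spider/crab families, so the content of the conjecture is twofold: to identify the extra special graph $ES_{b,2r}$ as the unique maximizer strictly beating these competitors, and to compute $\lambda_2^-(ES_{b,2r})$ in closed form. The approach would proceed in three stages: first, reduce to a short list of candidate extremal trees via perturbation arguments; second, diagonalize the Laplacian of $ES_{b,2r}$ via its natural equitable partition; and third, compare $\lambda_2^-(ES_{b,2r})$ with the Laplacian values of the remaining candidates.

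For the first stage I would follow the reductions used in the proofs of \cref{thm:ranch,thm:unit}, combining edge-switching and path-lengthening operations with the constraint on the number of leaves and the matching number, to show that a $\lambda_2$-maximizer in $\widetilde{\cT}(b, br+2)$ must be a spider, a crab, or an extra special graph. The matching-number value $m = br+2$ is pivotal: it allows a single matching edge beyond the homogeneous spider at level $br+1$ (where \cref{thm:unit} shows the crab is extremal) but not enough slack to reach the spider extremum at level $br+s$ for $s \geq 3$, and the extra special graph is precisely the configuration that absorbs this extra matched pair in the most symmetric way. For the second stage, the automorphism group of $ES_{b,2r}$ furnishes an equitable partition that reduces the Laplacian to a small quotient matrix whose two smallest positive eigenvalues produce $\lambda_2^{\pm}(ES_{b,2r})$; the conjectured closed form can then be verified directly from the associated characteristic polynomial, in analogy with the formula already given for $\sigma_2^{\pm}(ES_{b;p})$. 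The third stage is a comparison of characteristic polynomials among the finite list of candidates, which by interlacing amounts to a sign check at a single point.

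\textbf{Main obstacle.} The hard step is the first one. Ruling out exotic configurations with multiple asymmetrically placed branching vertices appears to require a strict monotonicity principle: lengthening one pendant path while shortening another (preserving $b$ and $m$) must strictly decrease $\lambda_2$ unless the tree already belongs to the extremal family. The standard symmetrization and path-lifting lemmas apply cleanly under symmetry assumptions that fail at this residue class, and adapting them to the asymmetric perturbations forced by $m = br+2$---without a universal eigenvector interpolation linking a spider to its extra special deformation---is plausibly why the statement remains conjectural. A secondary difficulty is the uniqueness claim for small $r$, where $\lambda_2^-(ES_{b,2r})$ lies close to the values attained by near-extremal spiders and the comparison of characteristic polynomials may require a finite case check at the boundary rather than a uniform argument.
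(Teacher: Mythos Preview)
The statement you are addressing is labeled as a \emph{Conjecture} in the paper, not a theorem; the paper supplies no proof for it. There is therefore nothing to compare your proposal against. Your outline is a sensible program for attacking the problem---reduction to a finite candidate list, quotient-matrix diagonalization of $ES_{b,2r}$ via its equitable partition, then polynomial sign comparisons---and you have correctly located the real obstruction: the perturbation and monotonicity tools available in the paper (\cref{lem:naturally}, the crab comparisons, the path-subtree bounds) do not force every extremal tree in $\widetilde{\cT}(b,br+2)$ into the spider/crab/extra-special family when $m\equiv 2\pmod b$. That reduction is precisely what the authors did not complete, which is why the statement is left open.

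One small correction to your plan: you speak of verifying ``the conjectured closed form'' for $\lambda_2^{\pm}(ES_{b,2r})$, but the paper never actually records such a formula---only the Steklov analogue $\sigma_{ES}^{\pm}(b;p)$ appears explicitly (in \cref{lem:extendedstar}). Carrying out the equitable-partition computation you describe and extracting $\lambda_2^{-}(ES_{b,2r})$ would itself be new relative to the paper, though it is routine once set up.
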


\section{Proofs}\label{sec:wide}

\subsection{Steklov eigenvalue}

Recall that shrinking a tree only enlarges the Steklov eigenvalues. 

\begin{lemma}[{\cite[Corollary 1.1]{yu2024MonotonicitySteklovEigenvalues}}]\label{lem:monotone}
    Let $T$ be a finite tree with leaves as boundary $B$. 
    Let $T'$ be a subtree of $T$ with leaves as boundary $B'$. 
    Then
    \begin{align}
        \sigma_{i}(T) \leq \sigma_{i}(T')
    \end{align}
    for $i = 1,2, \ldots, \card{B'}$.
\end{lemma}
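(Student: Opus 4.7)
My plan is to use the Courant--Fischer min-max characterization of Steklov eigenvalues. Writing $E_T(u) = \sum_{xy \in E(T)}(u(x)-u(y))^2$ for the Dirichlet energy and $\hat{f}$ for the harmonic extension of $f \colon B \to \RR$ to $V(T)$, we have
\begin{align}
    \sigma_i(T,B) = \min_{\substack{W \subseteq \RR^B \\ \dim W = i}} \max_{f \in W \setminus \set{0}} \frac{E_T(\hat{f})}{\sum_{b \in B} f(b)^2}.
\end{align}
To prove $\sigma_i(T) \leq \sigma_i(T')$, it suffices to exhibit a linear injection $\Phi \colon \RR^{B'} \to \RR^B$ satisfying $E_T(\widehat{\Phi(g)}) \leq E_{T'}(\hat{g})$ and $\norm{\Phi(g)}_{\ell^2(B)} \geq \norm{g}_{\ell^2(B')}$ for every $g$; then applying min-max to an $i$-dimensional witness $W'$ for $\sigma_i(T')$ and taking $W = \Phi(W')$ delivers the bound.

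For the construction, given $g \colon B' \to \RR$, first form its harmonic extension $\hat{g}$ on $V(T')$. For each connected component $C$ of the subgraph of $T$ induced on $V(T) \setminus V(T')$, there is a unique vertex $w_C \in V(T')$ adjacent to $C$, since $T$ is a tree. Define $u \colon V(T) \to \RR$ by $u = \hat{g}$ on $V(T')$ and $u \equiv \hat{g}(w_C)$ on each such $C$, and set $\Phi(g) := u|_B$. Every edge outside $E(T')$ either lies inside some $C$ or joins $C$ to $w_C$, and $u$ is constant across each such edge, so $E_T(u) = E_{T'}(\hat{g})$. Since the harmonic extension $\widehat{\Phi(g)}$ minimizes Dirichlet energy among all extensions of $\Phi(g)$ on $T$, we obtain $E_T(\widehat{\Phi(g)}) \leq E_T(u) = E_{T'}(\hat{g})$.

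It remains to verify the norm comparison and injectivity. Any leaf $b$ of $T$ lying in $V(T')$ is automatically a leaf of $T'$, hence $B \cap V(T') \subseteq B'$ and each such $b$ contributes $g(b)^2$ to both $\norm{\Phi(g)}_{\ell^2(B)}^2$ and $\norm{g}_{\ell^2(B')}^2$. For every $b' \in B' \setminus B$, the vertex $b'$ has degree at least $2$ in $T$ but degree $1$ in $T'$, so there exists at least one pruned component $C$ with $w_C = b'$; each such $C$ contains at least one leaf of $T$ at which $\Phi(g)$ takes the value $\hat{g}(b') = g(b')$, contributing at least $g(b')^2$ to $\norm{\Phi(g)}_{\ell^2(B)}^2$. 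Pruned components attached at interior vertices of $T'$ only add further nonnegative terms. Summing over $b' \in B' \setminus B$ yields $\norm{\Phi(g)}_{\ell^2(B)}^2 \geq \norm{g}_{\ell^2(B')}^2$, and the same ``witness leaf'' observation shows $\Phi(g) = 0$ forces $g \equiv 0$, giving injectivity.

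The main obstacle is the norm-comparison step: the bookkeeping needed to match every $b' \in B' \setminus B$ to at least one distinct leaf of $T$ hanging off $b'$ through a pruned component, and to safely ignore (without double-counting) the pruned components attached at interior vertices of $T'$. Once this combinatorial matching is settled, the rest reduces to the Dirichlet principle and Courant--Fischer.
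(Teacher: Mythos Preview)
Your argument is correct. The paper does not actually prove this lemma: it is quoted verbatim from \cite{yu2024MonotonicitySteklovEigenvalues} without proof, so there is no ``paper's own proof'' to compare against. That said, your approach is exactly the one the paper uses to prove the Laplacian analogue (\cref{lem:naturally}): extend test functions on $T'$ to $T$ by making them constant on each deleted branch, then feed the resulting subspace into Courant--Fischer. The only additional wrinkle in the Steklov case is that the quadratic form in the denominator lives on the boundary rather than the whole vertex set, which is precisely what your ``witness leaf'' argument handles.

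Regarding the obstacle you flag: the double-counting worry is a non-issue. Because $T$ is a tree, each pruned component $C$ is attached at a \emph{unique} vertex $w_C \in V(T')$, so components hanging off distinct $b' \in B' \setminus B$ are automatically disjoint, and the witness leaves you pick for different $b'$ are automatically distinct. Pruned components attached at interior vertices of $T'$ contribute only further nonnegative terms $\hat g(w_C)^2$ to $\norm{\Phi(g)}_{\ell^2(B)}^2$ and can simply be dropped. With that observation made explicit, the bookkeeping collapses to a one-line inequality and your proof is complete.
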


Let us consider a family of trees with one center. 

\begin{definition}\label{def:spider}
    Let $p_1, p_2, \ldots, p_t \geq 0$, $\ell_1 \geq \ell_2 \geq \cdots \geq \ell_t\geq 1$ be integers such that $p_1+p_2 \geq 2$. 
    Consider $p_i$ paths of length $\ell_i$ for $i = 1, 2, \ldots, t$. 
    For each of the $(p_1 \ell_1 + \cdots + p_t \ell_t)$ paths, take one end of the path and identify these ends as one vertex. 
    We get a graph called the spider graph $Sp_{p_1, p_2, \ldots, p_t; \ell_1, \ell_2, \ldots, \ell_t}$.
    (see \cref{fig:spider}).
\end{definition}

\begin{figure}[htbp]
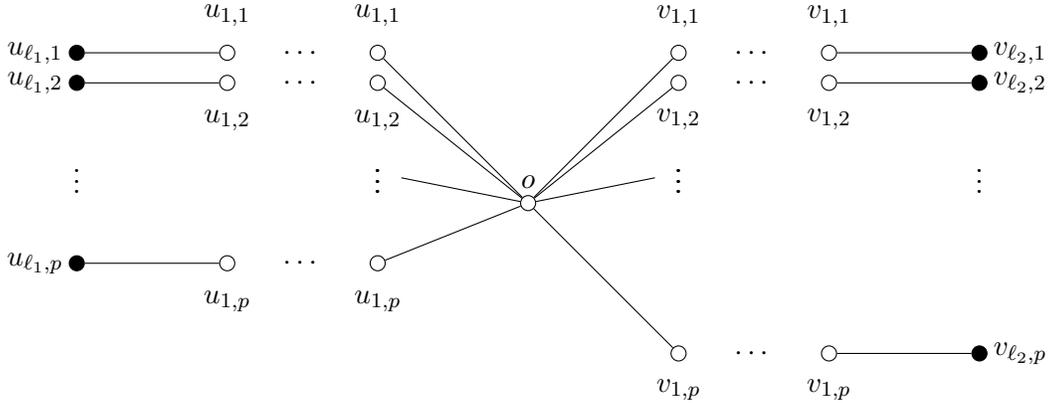

    \centering
    \includestandalone{spider}
    \caption{The spider graph $Sp_{p_1, q_2; \ell_1, \ell_2}$ with leaves as boundary.}
    \label{fig:spider}
\end{figure}

Next we determine the Steklov eigenvalues of the spider graphs with two different lengths. 

\begin{lemma}\label{lem:spider}
    Let $p_1 \geq 0$, $p_2 \geq 0$, $\ell_1 \geq \ell_2 \geq 1$ be integers such that $p_1+p_2 \geq 2$. 
    Let $T$ be the spider graph $Sp_{p_1, p_2; \ell_1, \ell_2}$ with leaves as boundary. 
    Then $\sigma_1(T) = 0$, $\sigma_2(T) = \cdots = \sigma_{p_1}(T) = \frac{1}{\ell_1}$, $\sigma_{p_1+1}(T) = \frac{p_1+p_2}{\ell_2 p_1 + \ell_1 p_2}$, and $\sigma_{p_1+2}(T) = \cdots = \sigma_{p_1+p_2}(T) = \frac{1}{\ell_2}$. 
    If $p_1 = 0$, then the eigenvalue $\frac{p_1+p_2}{\ell_2 p_1 + \ell_1 p_2}$ is missing.  
    If $p_1 = 0$ or $1$, then the eigenvalue $\frac{1}{\ell_1}$ is missing. 
    If $p_2 = 0$, then the eigenvalue $\frac{p_1+p_2}{\ell_2 p_1 + \ell_1 p_2}$ is missing.  
    If $p_2 = 0$ or $1$, then the eigenvalue $\frac{1}{\ell_2}$ is missing.  
\end{lemma}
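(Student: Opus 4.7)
The plan is to solve the Steklov eigenvalue problem for $T = Sp_{p_1, p_2; \ell_1, \ell_2}$ directly, exploiting its symmetry so that the problem reduces to a small linear system in the center value and the leaf values.

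First I would pin down the harmonic extension. Write $v = \hat{f}(c)$ at the center $c$ and $w$ for the value at a leaf joined to $c$ by a pendant path of length $\ell$. Every interior vertex on such a path has degree two, so discrete harmonicity forces the usual three-term linear recurrence along the path; hence $\hat{f}$ is the affine interpolation, and at distance $k$ from $c$ it equals $v + k(w-v)/\ell$. Two consequences follow immediately: the discrete normal derivative at the leaf is $(w-v)/\ell$, and harmonicity of $\hat{f}$ at $c$ translates to $\sum_{i} (v - w_i)/\ell_i = 0$, where the sum runs over all pendant paths.

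Second, I would write the Steklov condition $\Lambda f = \sigma f$ as a per-leaf equation $(1 - \sigma \ell)\, w = v$ and case-split on whether $1 - \sigma \ell_j$ vanishes. If $\sigma = 0$ we get the constant eigenfunction. If $\sigma = 1/\ell_1$ with $\ell_1 \neq \ell_2$, the length-$\ell_2$ equation forces $v = 0$ and then each $w^{(2)}_j = 0$; the remaining constraint $\sum_i w^{(1)}_i = 0$ cuts out a $(p_1 - 1)$-dimensional eigenspace, which contributes nothing when $p_1 \leq 1$. The symmetric case $\sigma = 1/\ell_2$ gives a $(p_2 - 1)$-dimensional eigenspace. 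In the remaining case both factors $1 - \sigma \ell_j$ are nonzero, so all leaves of a common length are equal and proportional to $v$; substituting into the center equation and clearing denominators yields the single relation $\sigma (p_1 \ell_2 + p_2 \ell_1) = p_1 + p_2$, giving a one-dimensional eigenspace that disappears if $p_1 = 0$ or $p_2 = 0$.

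Third, I would tally multiplicities: in the generic case these total $1 + (p_1 - 1) + 1 + (p_2 - 1) = p_1 + p_2 = \card{B}$, exhausting the spectrum, and the drops in the degenerate subcases match exactly the ``missing'' eigenvalues listed in the statement. The ordering then follows from the chain $1/\ell_1 \leq (p_1+p_2)/(p_1 \ell_2 + p_2 \ell_1) \leq 1/\ell_2$, each inequality reducing to $\ell_1 \geq \ell_2$ after cross-multiplication. The only delicate point is the bookkeeping for the degenerate cases where $\ell_1 = \ell_2$ or one of the $p_i$ is zero or one; in each such case several candidate eigenvalues coincide or fail to admit a nontrivial eigenvector, and one must check that the surviving eigenvalue-with-multiplicity list still accounts for $p_1 + p_2$ dimensions, but this is routine verification rather than a real obstacle.
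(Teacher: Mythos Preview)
Your argument is correct. The paper's proof takes a different, more terse route: it simply writes down explicit formulas for a full set of eigenfunctions $\xi_1,\ldots,\xi_{p_1+p_2}$ (affine along each leg, supported on one or two legs for the $1/\ell_j$-eigenfunctions, and a single symmetric function for the mixed eigenvalue) and asserts without further comment that they are eigenfunctions for the stated eigenvalues. It does not discuss the ordering or the degenerate subcases.

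Your approach is a derivation rather than a verification: you parametrize every harmonic extension by the center value $v$ and the leaf values, reduce the Steklov condition to $(1-\sigma\ell)w=v$, and case-split on vanishing of $1-\sigma\ell_j$. This buys you an explanation of where the eigenvalues come from, a proof of completeness via the dimension count, a clean treatment of the degenerate cases (small $p_i$, equal $\ell_j$), and the ordering inequality---none of which the paper spells out. The paper's approach is shorter once one already knows the answer, but yours is more self-contained and would be the more informative version to include.
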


\begin{proof}
    We give the Steklov eigenfunctions directly. 
    Define $\sigma_1 = 0$, $\sigma_2 = \cdots = \sigma_{p_1} = \frac{1}{\ell_1}$, $\sigma_{p_1+1} = \frac{p_1+p_2}{\ell_2 p_1 + \ell_1 p_2}$, and $\sigma_{p_1+2} = \cdots = \sigma_{p_1+p_2} = \frac{1}{\ell_2}$. 
    Define
    \begin{align}
        \xi_1(v) = 1, \quad \forall v \in V; 
    \end{align}
    \begin{align}
        \xi_m(v) = 
        \begin{cases}
            1 - \frac{\ell_1 - j}{\ell_1}, & v = u_{m-1,j}, 1 \leq j \leq \ell_1, \\
            -(1 - \frac{\ell_1 - j}{\ell_1}), & v = u_{m,j}, 1 \leq j \leq \ell_1, \\
            0, & \textotherwise,
        \end{cases}
    \end{align}
    for $m = 2, 3, \ldots, p_1$;
    \begin{align}
        \xi_{p_1+1}(v) = 
        \begin{cases}
            p_2(1 - \frac{(\ell_1 - j)(p_1+p_2)}{\ell_2 p_1 + \ell_1 p_2}), & v = u_{i,j}, 1 \leq i \leq p_1, 1 \leq j \leq \ell_1, \\
            -p_1(1 - \frac{(\ell_2 - j)(p_1+p_2)}{\ell_2 p_1 + \ell_1 p_2}), & v = v_{i,j}, 1 \leq i \leq p_2, 1 \leq j \leq \ell_2, \\
            \frac{(\ell_2 - \ell_1)p_1 p_2}{\ell_2 p_1 + \ell_1 p_2}, & v = o;
        \end{cases}
    \end{align}
    and
    \begin{align}
        \xi_m(v) = 
        \begin{cases}
            1 - \frac{\ell_2 - j}{\ell_2}, & v = v_{m-q-1,j}, 1 \leq j \leq \ell_2, \\
            -(1 - \frac{\ell_2 - j}{\ell_2}), & v = v_{m-q,j}, 1 \leq j \leq \ell_2, \\
            0, & \textotherwise,
        \end{cases}
    \end{align}
    for $m = p_1+2, p_1+3, \ldots, p_1+p_2$.
    Then $\xi_j$ is a Steklov eigenfunction corresponding to Steklov eigenvalue $\sigma_j$ for $j = 1, 2, \ldots, p_1+p_2$.
\end{proof}

\begin{definition}\label{def:extraspecial}
    Let $p \geq 1$ and $b \geq 3$ be integers.
    The extra special graph $ES_{b; p}$ is the spider graph $Sp_{1, 1, b-2; p+2, p+1, p}$.
\end{definition}

Next we determine the Steklov eigenvalues of the extra special graph.

\begin{lemma}\label{lem:extendedstar}
    Let $p \geq 1$ and $b \geq 3$ be integers. 
    Let $T$ be the extra special graph $ES_{b; p}$. 
    Then $\sigma_1(T) = 0$, $\sigma_2(T) = \sigma_{ES}^-(b; p)$, $\sigma_{3}(T) = \sigma_{ES}^+(b; p)$, and $\sigma_{4}(T) = \cdots = \sigma_{b}(T) = \frac{1}{p}$, 
    where 
    \begin{align}
        \sigma_{ES}^{\pm}(b; p) = \frac{2bp + 3b - 3 \pm \sqrt{b^2-2 b+9}}{2(bp^2 + 3bp - 3p + 2b - 4)}.
    \end{align}
    If $b = 3$, then the eigenvalue $\frac{1}{p}$ is missing. 
\end{lemma}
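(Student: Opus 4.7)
The plan is to construct all $b$ Steklov eigenfunctions explicitly, in the spirit of the proof of \cref{lem:spider}. Label the three types of pendant paths emanating from the centre $o$: the length-$(p+2)$ path $o = a_0, a_1, \dots, a_{p+2}$, the length-$(p+1)$ path $o = b_0, b_1, \dots, b_{p+1}$, and the $b-2$ length-$p$ paths $o = c_{i,0}, c_{i,1}, \dots, c_{i,p}$ for $i = 1, \dots, b-2$.

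First I would produce a $(b-3)$-dimensional eigenspace at eigenvalue $1/p$ from functions that vanish on the two long paths and at $o$. Any such harmonic function is determined by leaf values $\alpha_1, \dots, \alpha_{b-2}$; linearity along each short path gives $f(c_{i,j}) = \alpha_i j/p$, and harmonicity at $o$ reduces to $\sum_i \alpha_i = 0$. At each short leaf one checks $\partial_n f(c_{i,p}) = \alpha_i/p$, so the Steklov equation holds with eigenvalue $1/p$. This eigenspace has dimension $b-3$, which is $0$ when $b = 3$, matching the remark that $1/p$ is missing in that case.

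Next I would find the three ``symmetric'' eigenfunctions that take a common profile on all $b-2$ short paths. Such a harmonic function is determined by the three leaf values $A, B, C$ on the length $p+2, p+1, p$ paths and the central value $D$. The Steklov equations at the three leaf types read
\begin{align}
    A - D = \sigma(p+2) A, \quad B - D = \sigma(p+1) B, \quad C - D = \sigma p\, C,
\end{align}
and, using these, harmonicity at $o$ simplifies to $\sigma\bigl(A + B + (b-2)C\bigr) = 0$. The branch $\sigma = 0$ forces $A = B = C = D$ and yields the trivial constant eigenfunction. For $\sigma \neq 0$, eliminating $D$ and then the common scale produces the quadratic
\begin{align}
    \bigl(bp^2 + (3b-3)p + 2b - 4\bigr)\sigma^2 - (2bp + 3b - 3)\sigma + b = 0,
\end{align}
whose discriminant simplifies to $b^2 - 2b + 9$ and whose two roots are exactly $\sigma_{ES}^{\pm}(b; p)$.

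The main obstacle is the careful bookkeeping of the symmetric branch: verifying that the harmonicity condition at $o$ combines with the three Steklov equations to give precisely the displayed quadratic, and fixing the ordering of its two roots relative to $1/p$. For the ordering I would evaluate the quadratic at $\sigma = 1/p$, obtaining the value $2(b-2)/p^2 \geq 0$ for $b \geq 3$, which together with the positive leading coefficient places both roots on the same side of $1/p$; inspecting the explicit formula then shows $\sigma_{ES}^+ < 1/p$, so $0 = \sigma_1 < \sigma_2 = \sigma_{ES}^- < \sigma_3 = \sigma_{ES}^+ \leq \sigma_4 = \cdots = \sigma_b = 1/p$. Finally, the $1 + 2 + (b-3) = b$ eigenfunctions built in the two steps lie in eigenspaces that are separated by their eigenvalue or, at eigenvalue $1/p$, by antisymmetry among the short paths; hence they are linearly independent and exhaust the boundary spectrum.
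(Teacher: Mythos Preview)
Your proposal is correct and follows essentially the same route as the paper: split the boundary space into the $(b-3)$-dimensional antisymmetric piece supported on the short legs (eigenvalue $1/p$) and the three-dimensional symmetric piece, then solve for the latter explicitly. The paper simply writes down the resulting eigenfunctions and asserts their eigenvalues, while you supply the derivation of the quadratic and the ordering check $\sigma_{ES}^{+}<1/p$, but the underlying decomposition and eigenfunctions are the same.
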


\begin{proof}
    We give the Steklov eigenfunctions directly. 
    Define $\sigma_1 = 0$, $\sigma_2 = \sigma_{ES}^-(b; p)$, $\sigma_{3} = \sigma_{ES}^+(b; p)$, and $\sigma_{4} = \cdots = \sigma_{b} = \frac{1}{p}$.
    Define
    \begin{align}
        \xi_1(v) = 1, \quad \forall v \in V; 
    \end{align}
    \begin{align}
        \xi_m(v) = 
        \begin{cases}
            f_1(1 - (p+2-j)\sigma_m), & v = v_{1,j}, 1 \leq j \leq p+2, \\
            f_2(1 - (p+1-j)\sigma_m), & v = v_{2,j}, 1 \leq j \leq p+1, \\
            1 - (p-j)\sigma_m, & v = v_{i,j}, 3 \leq i \leq b, 1 \leq j \leq p, \\
            1 - p \sigma_m, & v = v_{0}, 
        \end{cases}
    \end{align}
    where
    \begin{align}
        f_1 &= 1 - 2b - bp + (bp^2 + 3bp - 3p + 2b - 4)\sigma_m, \\
        f_2 &= 1 + b + bp + (bp^2 + 3bp - 3p + 2b - 4)\sigma_m,
    \end{align}
    for $m = 2, 3$;
    \begin{align}
        \xi_m(v) = 
        \begin{cases}
            1 - \frac{p - j}{p}, & v = v_{m-1,j}, 1 \leq j \leq p, \\
            -(1 - \frac{p - j}{p}), & v = v_{m,j}, 1 \leq j \leq p, \\
            0, & \textotherwise,
        \end{cases}
    \end{align}
    for $m = 4, 5, \ldots, b$.
    Then $\xi_j$ is a Steklov eigenfunction corresponding to Steklov eigenvalue $\sigma_j$ for $j = 1, 2, \ldots, b$.
\end{proof}

Next we consider a family of trees with two centers. 

\begin{definition}
    Let $b_1, b_2, r \geq 1$ be integers. 
    The crab graph $CG_{b_1, b_2; r}$ is obtained by identifying the center of the spider graph $Sp_{b_1; r}$ to an end of a separate edge, and identifying the center of another spider graph $Sp_{b_2; r}$ to the other end of the edge. 
    See~\cref{fig:spring}.
\end{definition}

\begin{figure}[htbp]
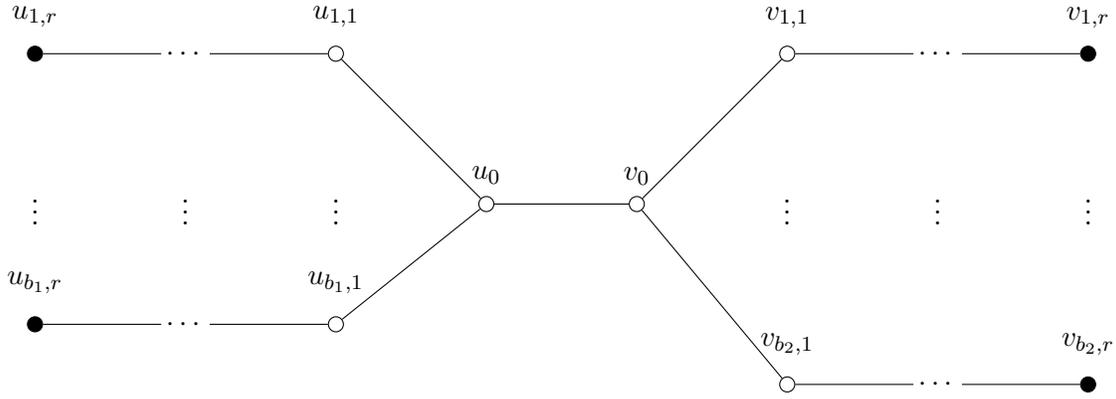

    \centering
    \includestandalone{crab}
    \caption{a crab graph $CG_{b_1, b_2; r}$ with leaves as boundary.}\label{fig:spring}
\end{figure}

The Steklov eigenvalue of crab graphs are given as follows.

\begin{lemma}[{\cite[Lemma 2.7]{lin2025MaximizeSteklovEigenvalue}}]
    Let $b_1, b_2, r \geq 1$ be integers. 
    Let $T = (V, E)$ be the crab graph $CG_{b_1, b_2; r}$ with leaves as boundary $B$. 
    Then the Steklov eigenvalues of $T$ are
    $\sigma_1 = 0$, $\sigma_2 = \frac{b_1 + b_2}{b_1 b_2 + r(b_1 + b_2)}$, and $\sigma_3 = \cdots = \sigma_{b_1 + b_2} = 1/r$. 
\end{lemma}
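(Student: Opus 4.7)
The plan is to exhibit explicit Steklov eigenfunctions for each claimed eigenvalue and observe that together they span $\RR^B$, where $|B| = b_1 + b_2$. The constant function on $V$ is harmonic with vanishing normal derivative, giving $\sigma_1 = 0$.

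For the eigenvalue $1/r$, I would use localized, arm-supported test functions. Label the $b_i$ paths attached to the $i$-th center as $P_1^{(i)}, \dots, P_{b_i}^{(i)}$. For each pair $P_j^{(i)}, P_k^{(i)}$ of arms of the same spider, define $\xi$ to be the linear interpolation rising from $0$ at the center to $1$ at the leaf of $P_j^{(i)}$, falling from $0$ at the center to $-1$ at the leaf of $P_k^{(i)}$, and identically zero elsewhere in the tree (including the other spider, the bridge edge and its two endpoints). This is harmonic on every interior vertex of the two chosen paths, harmonic at the center since the $\pm$ contributions cancel, and trivially harmonic on the zero region, so it is a harmonic extension of its boundary trace. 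The normal derivative at the two active leaves equals $\pm 1/r$, showing $\xi$ is an eigenfunction with eigenvalue $1/r$. Choosing $b_1 - 1$ independent pairs in the first spider and $b_2 - 1$ in the second gives $b_1 + b_2 - 2$ linearly independent such eigenfunctions.

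For $\sigma_2$, I would exploit the symmetry permuting arms of each spider and look for an eigenfunction taking a common value $a$ at all leaves of the first spider and a common value $b$ at all leaves of the second. Harmonicity forces linear interpolation along each arm, with values $c_1, c_2$ at the two centers; matching at the centers gives the coupled system
\begin{align}
    c_1(r + b_1) = b_1 a + r c_2, \qquad c_2(r + b_2) = b_2 b + r c_1,
\end{align}
while the Steklov condition at a leaf of either spider reads $(a - c_1)/r = \sigma a$ and $(b - c_2)/r = \sigma b$. Imposing orthogonality to the constant via the ansatz $a = b_2$, $b = -b_1$, eliminating $c_1, c_2$, and collecting terms reduces the pair of center equations to a single linear equation in $\sigma$ whose unique solution is $\sigma = (b_1+b_2)/(b_1 b_2 + r(b_1+b_2))$. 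Combined with the constant function and the $b_1 + b_2 - 2$ local eigenfunctions above, this produces all $b_1 + b_2$ eigenvalues counted with multiplicity, and the ordering $0 < \sigma_2 < 1/r$ is immediate from the formula since $b_1 b_2 > 0$.

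The main obstacle is purely bookkeeping in the global step: one must keep track of the affine profiles on both sides, impose harmonicity at both centers (which couples through the bridge term $r c_2$ and $r c_1$), and match with the Steklov boundary condition at the leaves. Once the symmetric ansatz is fixed, the problem collapses to two linear equations and the stated closed-form for $\sigma_2$ drops out. Independence of the constructed eigenfunctions is clear from their supports: the local eigenfunctions vanish at the centers and at leaves of the untouched spider, while the global $\sigma_2$-eigenfunction is non-zero on all leaves.
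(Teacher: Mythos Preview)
The paper does not actually prove this lemma: it is quoted from \cite{lin2025MaximizeSteklovEigenvalue} without argument. Your proof is correct and follows exactly the template the paper uses for the neighbouring Lemmas~\ref{lem:spider} and~\ref{lem:extendedstar}: write down explicit harmonic extensions that are linear along each arm, check the boundary normal-derivative condition, and count dimensions. Your treatment is in fact more informative than those proofs, since you derive the value of $\sigma_2$ from the two coupled center equations rather than simply asserting the eigenfunction; the only cosmetic difference is that the paper would state the eigenfunctions by formula and leave the verification to the reader.
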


We are prepared to prove~\cref{thm:slope}.

\begin{proof}[Proof of~\cref{thm:slope}]
    Let $T$ be a tree with $n$ vertices and matching number $m$. 
    If $T$ contains a path of length $\ell$, then the matching number of $T$ is at least $\ceil{\ell/2}$. 
    Therefore, if the matching number of a tree is $m$, then the diameter $D$ of the tree is at most $2m$. 

    If $m = 1$, then the diameter is at most $2$. 
    The only tree with diameter $1$ is the star graph $S_2$. 
    Every tree with diameter $2$ is a star graph. 
    It is clear that the second Steklov eigenvalue of a star graph is $1$. 

    If $m = 2$, then the diameter is at most $4$. 
    If the diameter of $T$ is $4$, then it contains the path graph $P_5$ as a subtree. 
    Then by~\cref{lem:monotone} $\sigma_2(T) \leq \sigma_2(P_5) = \frac{1}{2} < \frac{n-2}{2n-5}$. 
    If the diameter of $T$ is $3$, then $T$ is a crab graph $CG_{p,q;1}$ with $p + q = n-2$. 
    Then $\sigma_2(T) = \frac{p+q}{pq + p + q} = \frac{n-2}{p(n-2-p) + n-2}\leq \frac{n-2}{2n-5}$. 
    The equality holds if and only if $T \cong CG_{1, n-3; 1}$. 
    If the diameter of $T$ is $2$, then it is a star graph and the matching number is $1$, contradiction. 

    Suppose $m \geq 3$. 
    Note that $\sigma_2(Sp_{m-1,n-2m+1;2,1}) = \frac{1}{2}$ as $m - 1 \geq 2$.
    If the diameter of $T$ is at least $5$, then it contains the path graph $P_6$ as a subtree. 
    Then by~\cref{lem:monotone} $\sigma_2(T) \leq \sigma_2(P_6) = \frac{2}{5} < \frac{1}{2}$. 
    If the diameter of $T$ is $4$. 
    If $T$ contains the spider graph $Sp_{1,2; 3, 1}$ as a subtree, then by~\cref{lem:monotone} we have $\sigma_2(T) \leq \sigma_2(Sp_{1,2; 3, 1}) \approx 0.42 < \frac{1}{2}$. 
    Suppose $T$ does not contain the spider graph $Sp_{1,2;3, 1}$ as a subtree.
    Let $u$ and $v$ be two vertices of distance $4$. 
    This implies that neither of $u$ and $v$ is a twin vertex. 
    So the tree $T$ must be a spider graph $Sp_{p_1,p_2; 2, 1}$ with $2p_1+p_2+1 = n$ and $p_1 \geq 2$. 
    Since the matching number is $m$, it can only be the spider graph $Sp_{m-1, n-2m+1;2,1}$. 
    If the diameter of $T$ is at most $3$, then the matching number is at most $2$, contradiction. 
\end{proof}

Next we bound the matching number by the number of leaves and the diameter of the tree. 

\begin{lemma}\label{lem:visitor}
    Let $b \geq 3$ and $t \geq 1$ be integers. 
    Let $T$ be a tree with $b$ leaves and of diameter at most $4t$. 
    Then the matching number of $T$ is at most $bt$.
\end{lemma}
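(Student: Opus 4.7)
The plan is to root $T$ at a central vertex and then bound the matching number by a double-counting argument between matching edges and root-to-leaf paths. Since the diameter of $T$ is at most $4t$, the radius of $T$ is at most $2t$, so I may choose a vertex $o$ with $\dist(o, v) \leq 2t$ for every $v \in V(T)$; root $T$ at $o$.

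Let $M$ be a maximum matching of $T$ and for each leaf $\ell$ let $P_\ell$ denote the unique path in $T$ from $o$ to $\ell$. The first key observation is that $P_\ell$ has at most $2t$ edges, so any submatching contained in $P_\ell$ has size at most $t$, giving the per-leaf upper bound
\begin{align}
    \card{\set{e \in M : e \subseteq P_\ell}} \leq t.
\end{align}
The second key observation is that every matching edge $e = uv \in M$ lies on some path $P_\ell$: orient $e$ so that $v$ is a child of $u$ in the rooted tree, pick any leaf $\ell$ in the subtree rooted at $v$ (which is nonempty since $v$ itself is either a leaf or has descendant leaves), and note that $P_\ell$ necessarily traverses the edge $uv$.

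Combining these, a double-count of pairs $(e, \ell)$ with $e \in M$ and $e \subseteq P_\ell$ yields
\begin{align}
    \card{M} \leq \sum_{e \in M} \card{\set{\ell : e \subseteq P_\ell}} = \sum_{\ell \text{ leaf}} \card{\set{e \in M : e \subseteq P_\ell}} \leq b t,
\end{align}
which is the claimed bound. The only conceptual subtlety is noticing that one does not need to assign each matching edge to a unique leaf (which would demand a Hall-type condition); the weaker fact that every edge lies on at least one such path, together with the per-leaf upper bound of $t$, is enough for the double count to close. Everything else is routine, so I do not foresee a genuine obstacle.
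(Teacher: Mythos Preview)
Your argument is correct. Rooting at a centre $o$ with eccentricity at most $2t$, each root-to-leaf path has at most $2t$ edges and hence carries at most $t$ edges of $M$; since every edge of the rooted tree has a unique child endpoint, following descendants down to a leaf shows every matching edge sits on at least one $P_\ell$, and the double count closes exactly as you wrote.

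The paper argues the same inequality via a slightly different decomposition: it fixes a diametral path of length $4t$ (contributing at most $2t$ matching edges) and then, for each of the remaining $b-2$ leaves, uses the branch from that leaf to the diametral path, which has length at most $2t$ and therefore contributes at most $t$ matching edges, for a total of $2t+(b-2)t=bt$. Both proofs rest on the same mechanism---cover all edges by a small family of short paths and bound the matching edges per path---but your centre-based version is more symmetric (all $b$ leaves are treated identically) and makes the covering/double-counting step explicit, whereas the paper's diametral-path version leaves the ``every matching edge is covered'' step implicit. The paper's decomposition, on the other hand, generalises directly to the subsequent odd-diameter lemmas (\cref{lem:recent,lem:material,lem:surface}), where the diametral path is analysed more finely; your centre-based argument would need modification there.
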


\begin{proof}
    Without loss of generality, we may assume the diameter is $4t$.
    Consider a diametral path of length $4t$. 
    It contributes at most $2t$ edges to the maximum matching. 
    For the rest of $(b-2)$ leaves, consider their distance to this diametral path. 
    The distance is at most $2t$, which would contribute at most $t$ edges to the maximum matching. 
    The total number of edges in the matching is at most $2t + (b-2)t = bt$. 
\end{proof}

\begin{lemma}\label{lem:recent}
    Let $b \geq 2$ and $t \geq 1$ be integers. 
    Let $T$ be a tree of diameter $4t+1$ with $b$ leaves and maximum matching number $bt+1$. 
    Then $T$ is a crab graph $CG_{b_1, b - b_1; 2t}$ for some positive integers $b_1 = 1,2, \ldots, b - 1$. 
\end{lemma}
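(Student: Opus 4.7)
The plan is to identify the middle edge $u_{2t} u_{2t+1}$ of a diametral path $P = u_0 u_1 \cdots u_{4t+1}$ as the bridge of the sought-after crab graph, and to show that removing it splits $T$ into two spiders whose legs all have length exactly $2t$.

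First, I would use the diametrality of $P$ to pin down where leaves can sit: if $w$ is a leaf of $T$ off $P$ whose shortest path to $P$ ends at $u_i$ with length $d_w$, then $d_w + i \leq 4t+1$ and $d_w + (4t+1-i) \leq 4t+1$, so $d_w \leq \min(i, 4t+1-i) \leq 2t$. Let $T_L, T_R$ denote the two components of $T - u_{2t} u_{2t+1}$ containing $u_0$ and $u_{4t+1}$ respectively, and let $b_L, b_R$ be the number of leaves of $T$ inside them, so $b_L + b_R = b$ with $b_L, b_R \geq 1$. Since every leaf of $T$ in $T_L$ is at distance $\leq 2t$ from $u_{2t}$ and every vertex of $T_L$ lies on a leaf-to-$u_{2t}$ path, I get $|V(T_L)| \leq 1 + 2t b_L$, hence $m(T_L) \leq t b_L$, and symmetrically $m(T_R) \leq t b_R$.

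Next I would split a maximum matching of $T$ by whether or not it uses the bridge. Without the bridge, $m(T) \leq m(T_L) + m(T_R) \leq bt$, too small; so every maximum matching must contain $u_{2t} u_{2t+1}$, and $bt + 1 = m(T) \leq 1 + m(T_L - u_{2t}) + m(T_R - u_{2t+1})$. For each component $C_j$ of $T_L - u_{2t}$, containing $b_j \geq 1$ leaves of $T$ now at distance $\leq 2t - 1$ from its root (the former neighbor of $u_{2t}$), the same leaf-path argument gives $|V(C_j)| \leq 1 + (2t-1) b_j$, hence $m(C_j) \leq \lfloor (1 + (2t-1) b_j)/2 \rfloor = t b_j - \lfloor b_j/2 \rfloor$. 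Summing over $j$ and adding the symmetric bound from the right side, the equality $m(T) = bt+1$ forces $\lfloor b_j/2 \rfloor = 0$ for every component on either side, i.e., every $b_j$ equals $1$, and simultaneously every $m(C_j) = t$.

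Finally, the equality $m(C_j) = t$ with $|V(C_j)| \leq 2t$ forces $|V(C_j)| = 2t$, which collapses the chain of inequalities and forces $C_j$ to be a bare path of length $2t - 1$ from $r_j$ to its unique leaf in $T$. Reattaching $u_{2t}$ turns each $C_j$ into a leg of length $2t$, so $T_L = Sp_{b_L; 2t}$; the symmetric argument gives $T_R = Sp_{b_R; 2t}$, and therefore $T = CG_{b_L, b - b_L; 2t}$ with $1 \leq b_L \leq b-1$, as claimed. The main obstacle I expect is carefully propagating the equality case through the nested chain of matching bounds to confirm that each $C_j$ really is a simple path rather than a more elaborate subtree with a single leaf of $T$.
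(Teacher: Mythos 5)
Your proof is correct. It follows the same overall strategy as the paper---anchor everything on a diametral path, prove the bound $bt+1$ by a counting argument, and read the crab structure off the equality case---but the mechanics are genuinely different. The paper counts matching contributions branch by branch: the diametral path gives at most $2t+1$ edges, each of the remaining $b-2$ leaves' attachment paths gives at most $t$, and it then asserts that equality forces every branch to attach at one of the two middle vertices with length exactly $2t$. You instead delete the central edge $u_{2t}u_{2t+1}$, observe that a maximum matching must use it (else $m(T)\le m(T_L)+m(T_R)\le bt$), and bound each component $C_j$ of $T_L-u_{2t}$ and $T_R-u_{2t+1}$ by half its vertex count via the estimate $|V(C_j)|\le 1+(2t-1)b_j$, giving $m(C_j)\le tb_j-\lfloor b_j/2\rfloor$; summing and comparing with $bt+1$ forces $b_j=1$ and $m(C_j)=t$ for every component, hence $|V(C_j)|=2t$ and each $C_j$ is the bare path from $r_j$ to its unique leaf of $T$, so each side is a spider with legs of length exactly $2t$. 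What your route buys is an airtight equality analysis: the step the paper states tersely (why no branch can attach away from the two central vertices, and why overlapping branches cause no double counting) comes for free from the component decomposition, since distinct components are vertex-disjoint; the cost is some extra bookkeeping with the floor terms and the verification (which you should keep explicit) that every leaf of a component other than its root is a leaf of $T$. Both arguments ultimately rest on the same diametrality fact $d_w\le\min(i,\,4t+1-i)\le 2t$ and on tightness of $2t+1+(b-2)t=bt+1$.
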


\begin{proof}
    Consider a diametral path of length $4t+1$. 
    It contributes at most $2t+1$ edges to the maximum matching. 
    For the rest of $(b-2)$ leaves, consider their distance to this diametral path. 
    The distance is at most $2t$, which would contribute at most $t$ edges to the maximum matching. 
    The total matching is at most $2t+1 + (b-2)t = bt+1$. 
    The equality holds if and only if the diametral path contributes exactly $2t+1$ edges and the other paths contribute exactly $t$ edges each. 
    In such case, these leaves can only be connected to the middle two vertices of the diametral path and the distance from the leaves to the diametral path is exactly $2t$. 
    So the graph is a crab graph $CG_{b_1, b - b_1; 2t}$ for some $b_1 = 1,2, \ldots, b - 1$. 
\end{proof}

\begin{lemma}\label{lem:material}
    Let $b \geq 3$ and $t \geq 0$ be integers. 
    Let $T$ be a tree with $b$ leaves and of diameter at most $4t+3$. 
    Then the matching number of $T$ is at most $bt+2$.
\end{lemma}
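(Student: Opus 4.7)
The plan is to prove \cref{lem:material} by strong induction on the number of leaves $b \geq 3$, with $t \geq 0$ fixed throughout.

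For the base case $b = 3$, the tree is a spider with three legs of lengths $\ell_1 \geq \ell_2 \geq \ell_3 \geq 1$ and diameter $\ell_1 + \ell_2 \leq 4t+3$. The matching number of such a spider is $\sum_{i=1}^{3}\lfloor \ell_i/2 \rfloor + \mathbbm{1}[\text{some } \ell_i \text{ is odd}]$. Using $\lfloor \ell_1/2\rfloor + \lfloor \ell_2/2\rfloor \leq \lfloor(\ell_1+\ell_2)/2\rfloor \leq 2t+1$ and $\ell_3 \leq \ell_2 \leq (4t+3)/2 \leq 2t+1$, the total is at most $2t+1+t+1 = 3t+2$.

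For the inductive step $b \geq 4$, fix a diametral path $P = p_0 p_1 \cdots p_D$ and call the \emph{pendant path} of a leaf $\ell$ the maximal sub-path starting at $\ell$ whose internal vertices all have degree $2$. I will split into two cases.

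\textbf{Case A:} Some leaf $\ell$ has pendant path of length $k \leq 2t$. Let $T'$ be the tree obtained by deleting the $k$ vertices of the pendant path other than its branching endpoint $w_\ell$. Then $T'$ has $b-1 \geq 3$ leaves and diameter at most $D \leq 4t+3$. Partitioning an arbitrary matching $M$ of $T$ into its $E(T')$-part and its pendant-path-part gives $\nu(T) \leq \nu(T') + \lceil k/2 \rceil$. By the inductive hypothesis $\nu(T') \leq (b-1)t+2$, and $\lceil k/2 \rceil \leq t$, so $\nu(T) \leq bt+2$.

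\textbf{Case B:} Every leaf has pendant path of length $\geq 2t+1$. For any side leaf $\ell$, its distance to $P$ is at most $\lfloor D/2 \rfloor \leq 2t+1$, and its pendant length is at most this distance; hence equality holds throughout, so the pendant has length exactly $2t+1$ and ends at a vertex $w_\ell \in V(P)$. The identity $d(w_\ell,p_0) + d(w_\ell, p_D) = D$ combined with $2t+1+d(w_\ell,p_0), 2t+1 + d(w_\ell,p_D) \leq D$ forces $w_\ell$ to be one of the central vertices $\{p_{2t+1}\}$ (if $D = 4t+2$) or $\{p_{2t+1},p_{2t+2}\}$ (if $D = 4t+3$). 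The pendant-length bound at the endpoint leaves $u, v$ likewise pushes the first branching vertex along $P$ from either side into $\{p_{2t+1},p_{2t+2}\}$, so no branching occurs off $P$ or outside the center, and the side pendants are simple paths of length $2t+1$. This pins $T$ down to one of three shapes: the spider $Sp_{b;2t+1}$ (when $D = 4t+2$), the spider $Sp_{1,b-1;2t+2,2t+1}$ (when $D = 4t+3$ with only one central branching vertex), or the crab $CG_{k_1+1,k_2+1;2t+1}$ with $k_1+k_2 = b-2$ (when $D = 4t+3$ with two central branching vertices). A direct spider/crab matching computation — using that each spider of $m$ legs of odd length $2t+1$ has matching $mt+1$ and that gluing two such spiders by a single edge allows both ``$+1$'' bonuses simultaneously — yields $\nu(T) \leq bt+2$ in every shape, with equality in the crab case.

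The main obstacle is the rigidity argument in Case B: one must rule out any branching off $P$ and any branching at non-central positions of $P$ using only the pendant-length hypothesis, and then verify that summing the spider/crab matching numbers produces exactly $bt + 2$ and not more, which relies on noticing that each ``spider at a central branching vertex'' contributes at most one extra matching edge beyond $\sum_i \lfloor \ell_i/2 \rfloor$.
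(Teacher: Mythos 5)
Your proof is correct, but it goes by a genuinely different route than the paper's. The paper fixes a diametral path $v_0\cdots v_{4t+3}$ and bounds a maximum matching directly by a charging argument: the path contributes at most $2t+2$ edges, each remaining leaf's path to the diametral path contributes at most $t$ edges, and the possible ``$+1$'' gains at the two central vertices are handled by a case analysis on which of the edges $v_{2t}v_{2t+1}$, $v_{2t+1}v_{2t+2}$, $v_{2t+2}v_{2t+3}$ lie in the matching; no induction and no structural classification of $T$ is needed. You instead induct on the number of leaves: a leaf with pendant path of length $k\le 2t$ is pruned at its branching vertex $w_\ell$ (which keeps exactly $b-1\ge 3$ leaves and does not increase the diameter), giving $\nu(T)\le \nu(T')+\lceil k/2\rceil\le (b-1)t+2+t$; and when every pendant path has length at least $2t+1$, the chain $2t+1\le \text{pendant}\le d(\ell,P)\le\lfloor D/2\rfloor\le 2t+1$ rigidifies $T$ into $Sp_{b;2t+1}$, $Sp_{1,b-1;2t+2,2t+1}$, or a crab $CG_{k_1+1,k_2+1;2t+1}$, each of which is checked to have matching number at most $bt+2$. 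The details hold up (the spider matching formula, the base case $b=3$, and the $\le \lceil k/2\rceil$ splitting are all sound), though two small points deserve a sentence each in a final write-up: in Case B you should note explicitly that $b\ge 4$ forces $D\in\{4t+2,4t+3\}$ (otherwise the inequalities for $w_\ell$ admit no solution, so there would be no side leaves), and that every branching vertex off $P$ would lie strictly between some side leaf and $P$, contradicting pendant length $=d(\ell,P)$. Your approach is longer but yields as a by-product the exact list of trees attaining $bt+2$ (the crabs $CG_{k_1+1,k_2+1;2t+1}$), which the paper's counting argument does not provide; the paper's argument is shorter, avoids induction, and is uniform over all attachment configurations.
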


\begin{proof}
    Without loss of generality, we may assume the diameter is $4t+3$.
    Consider a diametral path of length $4t+3$, say $v_0 \sim v_1 \sim \cdots v_{4t+3}$.
    It contributes at most $2t+2$ edges to the maximum matching. 
    We divide the cases by the number of edges it contributes to the maximum matching. 

    Suppose the diametral path contributes exactly $2t+2$ edges to the maximum matching. 
    Then these edges must be $v_0 v_1, v_2 v_3, \ldots, v_{4t+2} v_{4t+3}$. 
    We denote by $M_1$ the set of these edges. 
    For the rest of $(b-2)$ leaves, consider their distance to this diametral path. 
    The distance is at most $2t+1$, which would contribute at most $t+1$ edges to the maximum matching. 
    But each vertex on the diametral path has already been occupied by an edge in $M_1$. 
    So the path from each remaining leaf to the diametral path could contribute at most $t$ edges to the maximum matching. 
    The total number of edges in the matching is at most $2t+2 + (b-2)t = bt+2$. 

    Suppose the maximum matching does not contain the edges $v_{2t} v_{2t+1}$, $v_{2t+1} v_{2t+2}$ but contains the edge $v_{2t+2} v_{2t+3}$. 
    Then the diametral path contributes at most $2t+1$ edges to the maximum matching. 
    For the rest of $(b-2)$ leaves, consider their distance to this diametral path. 
    Suppose $b_1$ remaining leaves links to the diametral path through the vertex $v_{2t+1}$. 
    Then they contribute at most $b_1 t + 1$ edges to the maximum matching. 
    For the leaves links to the diametral path through the vertex $v_{2t+2}$, each of them contributes at most $t$ edges to the maximum matching.
    For the leaves links to the diametral path through other vertices, each of them contributes at most $t$ edges to the maximum matching since the distance is at most $2t$.
    The total number of edges in the matching is at most $2t+1 + b_1 t + 1 + (b - 2 - b_1)t = bt+2$. 
    
    Suppose the maximum matching does not contain the edges $v_{2t+1} v_{2t+2}$, $v_{2t+2} v_{2t+3}$ but contains the edge $v_{2t} v_{2t+1}$. 
    Similarly, the total number of edges in the matching is at most $bt+2$. 
    
    Suppose the maximum matching does not contain the edges $v_{2t+1} v_{2t+2}$, $v_{2t+2} v_{2t+3}$, or $v_{2t} v_{2t+1}$. 
    Then the diametral path contributes at most $2t$ edges to the maximum matching. 
    Suppose $b_1$ remaining leaves links to the diametral path through the vertex $v_{2t+1}$. 
    Then they contribute at most $b_1 t + 1$ edges to the maximum matching. 
    Suppose $b_2$ remaining leaves links to the diametral path through the vertex $v_{2t+2}$. 
    Then they contribute at most $b_2 t + 1$ edges to the maximum matching. 
    For the leaves links to the diametral path through other vertices, each of them contributes at most $t$ edges to the maximum matching since the distance is at most $2t$. 
    The total number of edges in the matching is at most $2t + b_1 t + 1+ b_2 t + 1+ (b - 2 - b_1 - b_2)t = bt+2$. 

    In any case, the matching number is at most $bt+2$.
\end{proof}

\begin{lemma}\label{lem:surface}
    Let $b \geq 3$ and $t \geq 1$ be integers. 
    Let $T$ be a tree with $b$ leaves and of diameter at most $4t+2$. 
    Then the matching number of $T$ is at most $bt+1$.
\end{lemma}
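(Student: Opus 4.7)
The plan is to follow the diametral-path strategy established in~\cref{lem:visitor,lem:recent,lem:material} and do a case analysis on how a maximum matching uses the middle vertex of a diametral path of length $4t+2$. By~\cref{lem:visitor} applied with the same $t$, the desired bound already holds when the diameter is at most $4t$. For diameter exactly $4t+1$ a direct count suffices: the diametral path has $4t+2$ vertices and so contributes at most $2t+1$ matching edges, while each of the $b-2$ remaining leaves sits at distance at most $2t$ from the path and contributes at most $t$ matching edges, yielding a total of $2t+1+(b-2)t=bt+1$. The real work is thus concentrated in the diameter $4t+2$ case.

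Fix such a $T$, a diametral path $P\colon v_0v_1\cdots v_{4t+2}$, and a maximum matching $M$. For each $i$ let $T_i$ be the component of $T$ minus the edges of $P$ that contains $v_i$, and let $b_i$ be the number of leaves of $T$ lying in $T_i\setminus\{v_i\}$, so $\sum_i b_i=b-2$. Because $P$ is diametral, every such leaf sits at distance at most $\min(i,4t+2-i)$ from $v_i$, which is at most $2t+1$ only for $i=2t+1$ and at most $2t$ otherwise. I also rely on the following auxiliary subtree bound, provable by induction on a deepest leaf: a rooted tree with $k$ leaves at depth at most $h$ from its root $r$ has matching number at most $k\lfloor h/2\rfloor$ when $r$ is forbidden from the matching, and at most $k\lfloor h/2\rfloor+1$ when $r$ is allowed and $h$ is odd.

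I then split on the status of $v_{2t+1}$ in $M$. If $v_{2t+1}$ is matched on $P$, by symmetry assume $v_{2t+1}v_{2t+2}\in M$; then $|M\cap E(P)|$ is at most the matching number $2t+1$ of $P$ itself, while $v_{2t+1}$ and $v_{2t+2}$ are forbidden in the corresponding subtree matchings, so the auxiliary bound yields $|M\cap E(T_i)|\le b_it$ for every $i$ (at $i=2t+1$ the parity bonus is lost because the root is forbidden). Summing gives $|M|\le 2t+1+(b-2)t=bt+1$. Otherwise both edges at $v_{2t+1}$ on $P$ are absent from $M$; the path edges of $M$ are then confined to the two length-$2t$ subpaths $v_0\cdots v_{2t}$ and $v_{2t+2}\cdots v_{4t+2}$, contributing at most $t$ matching edges each, the subtree $T_{2t+1}$ (root now allowed, depth $\le 2t+1$) contributes at most $b_{2t+1}t+1$, and every other $T_i$ contributes at most $b_it$. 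Summing again yields $|M|\le 2t+(b_{2t+1}t+1)+\sum_{i\ne 2t+1}b_it=bt+1$.

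The main obstacle is the auxiliary subtree-matching bound; the rest is bookkeeping designed so that the extra matching edge saved by having $v_{2t+1}v_{2t+2}\in M$ on $P$ in the first case exactly balances the extra matching edge gained in the subtree $T_{2t+1}$ when $v_{2t+1}$ is free in the second case. Making precise the dependence of the subtree bound on the parity of the depth and on whether the root is available is the one technical step that warrants a careful induction, following the standard peeling argument of matching a deepest leaf with its parent and recursing.
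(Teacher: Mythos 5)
Your proof is correct and follows essentially the same route as the paper's: fix a diametral path of length $4t+2$, split on whether the middle vertex $v_{2t+1}$ is matched along the path, and bound each hanging subtree's contribution by the depth of its leaves, with the $+1$ bonus appearing exactly once so that both cases total $bt+1$. The only real difference is that you package the implicit subtree counts into an explicit auxiliary lemma; note that in your first case you also apply it to subtrees of even depth at most $2t$ whose root is \emph{not} forbidden, a configuration your lemma statement omits but which the same peeling argument covers with the same bound $b_i t$.
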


\begin{proof}
    Without loss of generality, we may assume the diameter is $4t+2$.
    Consider a diametral path of length $4t+2$, say $v_0 \sim v_1 \sim \cdots v_{4t+2}$.
    It contributes at most $2t+1$ edges to the maximum matching. 
    We divide the cases by the number of edges it contributes to the maximum matching. 

    Suppose the maximum matching contains one of the edges $v_{2t} v_{2t+1}$, $v_{2t+1} v_{2t+2}$. 
    For the rest of $(b-2)$ leaves, consider their distance to this diametral path. 
    Suppose $b_1$ remaining leaves links to the diametral path through the vertex $v_{2t+1}$. 
    Each of them contributes at most $t$ edges to the maximum matching since $v_{2t+1}$ has been occupied. 
    For the leaves links to the diametral path through other vertices, each of them contributes at most $t$ edges to the maximum matching since the distance is at most $2t$.
    The total number of edges in the matching is at most $2t+1 + b_1 t + (b - 2 - b_1)t = bt+1$. 

    Suppose the maximum matching does not contain the edges $v_{2t} v_{2t+1}$, $v_{2t+1} v_{2t+2}$. 
    Then the diametral path contributes at most $2t$ edges to the maximum matching. 
    For the rest of $(b-2)$ leaves, consider their distance to this diametral path. 
    Suppose $b_1$ remaining leaves links to the diametral path through the vertex $v_{2t+1}$. 
    Then they contribute at most $b_1 t + 1$ edges to the maximum matching. 
    For the leaves links to the diametral path through other vertices, each of them contributes at most $t$ edges to the maximum matching since the distance is at most $2t$.
    The total number of edges in the matching is at most $2t + b_1 t + 1 + (b - 2 - b_1)t = bt+1$. 
    
    In any case, the matching number is at most $bt+1$.
\end{proof}

We are prepared to prove~\cref{thm:fell,thm:older}.

\begin{proof}[{Proof of~\cref{thm:fell}}]
    Let $T$ be a tree with $b$ leaves and maximum matching number $m$. 

    If $m = 1$, then $T$ is a star and $\sigma_2(T) = 1$. 

    If $m = 2$, then $T$ is a crab graph $CG_{b_1, b - b_1; 1}$ for some $b_1 = 1,2,\ldots, b-1$. 
    We have $\sigma_2(T) = \frac{b}{b_1(b-b_1)+ b} \leq \frac{b}{2b-1}$. 
    The equality holds when $b_1 = 1$ or $b-1$. 

    If $m \geq 3$ and $b = 2$, then $T$ is the path graph $P_{2m}$ or $P_{2m+1}$. 
    We have $\sigma_2(T) = \frac{2}{2m-1}$ or $\sigma_2(T) = \frac{2}{2m}$. 

    Suppose $b \geq 3$ and $m = br+1, r \in \ZZ_+$. 
    By~\cref{lem:visitor}, the diameter of $T$ is at least $4r+1$. 
    If the diameter of $T$ is at least $4r+2$, then $\sigma_2(T) \leq \frac{2}{4r+2} = \frac{1}{2r+1} \leq \frac{1}{2r+\frac{b-1}{b}}$. 
    If the diameter of $T$ is exactly $4r+1$, then by~\cref{lem:recent} the tree $T$ is a crab graph $CG_{b_1, b-b_1; 2r}$ for some positive integer $b_1 = 1,2,\ldots, b-1$. 
    So $\sigma_2(T) = \frac{b}{b_1(b-b_1) + 2rb} \leq \frac{b}{2rb + b-1}$. 
    The equality holds if and only if $b_1 = 1$ or $b-1$. 
    
    Suppose $b \geq 3$ and $m = br+2, r \in \ZZ_+$. 
    By~\cref{lem:surface}, the diameter of $T$ is at least $4r+3$. 
    So $\sigma_2(T) \leq \frac{2}{4r+3}$.

    Suppose $b \geq 3$ and $m = br+s, 3 \leq s \leq b, r \in \ZZ_+$. 
    By~\cref{lem:material}, the diameter of $T$ is at least $4r+4$. 
    So $\sigma_2(T) \leq \frac{2}{4r+4} = \frac{1}{2r+2}$. 
\end{proof}

\begin{proof}[{Proof of~\cref{thm:older}}]
    Note that $\sigma_k(Sp_{1, b-1; 2m-1, 1}) = 1$ for $3 \leq k \leq b$. 
    Meanwhile, $\sigma_k(T) \leq 1$ for every tree $T$ (See~\cite[Theorem 1.5]{lin2024UpperBoundsSteklov} or~\cite[Corollary 2.12]{lin2025MaximizeSteklovEigenvalue}).
\end{proof}

\subsection{Algebraic connectivity}

    Let $M$ be a matrix. 
    We denote by $\Phi(M) = \Phi(M,x) = \det(xI - M)$ the characteristic polynomial of $M$. 
    The eigenvalues of $M$ are denoted by $\mu_1(M) \leq \mu_2(M) \leq \cdots$ if they are all real. 
    Let $G$ be a graph. 
    We denote by $L(G)$ the Laplacian matrix of $G$, by $\Phi(G) = \Phi(L(G))$ the characteristic polynomial of $L(G)$, and by $\lambda_k(G) = \lambda_k(L(G))$ the $k$-th Laplacian eigenvalue of $G$. 

We first show that shrinking a tree only enlarges the Laplacian eigenvalues. 

\begin{lemma}\label{lem:naturally}
    Let $T$ be a finite tree of order $n$. 
    Let $T'$ be a subtree of $T$ of order $n'$. 
    Then
    \begin{align}
        \lambda_{i}(T) \leq \lambda_{i}(T')
    \end{align}
    for $i = 1,2, \ldots, n'$.
\end{lemma}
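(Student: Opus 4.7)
The plan is to prove \cref{lem:naturally} by induction on $n - n'$, reducing to the case where $T$ is obtained from $T'$ by attaching a single new pendant vertex. This reduction is valid because any proper subtree can be grown back to the ambient tree one leaf at a time: if $V(T') \subsetneq V(T)$, then since $T$ is connected there is an edge of $T$ with exactly one endpoint in $V(T')$; its outer endpoint $v \notin V(T')$ is necessarily a leaf of the enlarged subtree on $V(T') \cup \{v\}$, because a second edge from $v$ into $V(T')$ would create a cycle in $T$.

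For the inductive step, suppose $T$ is obtained from $T'$ by attaching a new vertex $v$ via a single edge $uv$ with $u \in V(T')$. The argument is the Courant--Fischer min-max
\[
    \lambda_i(G) = \min_{\substack{S \subseteq \RR^{V(G)} \\ \dim S = i}} \max_{0 \neq f \in S} \frac{\sum_{xy \in E(G)} (f(x) - f(y))^2}{\sum_{w \in V(G)} f(w)^2}
\]
combined with a \emph{constant extension}: for $f : V(T') \to \RR$, define $\tilde f : V(T) \to \RR$ by $\tilde f|_{V(T')} = f$ and $\tilde f(v) = f(u)$. The new edge $uv$ contributes $(\tilde f(u) - \tilde f(v))^2 = 0$, so the Dirichlet numerator is preserved, while the denominator grows by $f(u)^2 \geq 0$. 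Writing $R_G(\cdot)$ for the Laplacian Rayleigh quotient on $G$, this yields $R_T(\tilde f) \leq R_{T'}(f)$ whenever $f \not\equiv 0$ (which forces $\tilde f \not\equiv 0$ since $\tilde f|_{V(T')} = f$).

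Taking $S' \subseteq \RR^{V(T')}$ to be an optimal $i$-dimensional subspace realizing $\lambda_i(T')$, the linear injective map $f \mapsto \tilde f$ produces an $i$-dimensional subspace $\tilde S \subseteq \RR^{V(T)}$, and the min-max principle applied with the test subspace $\tilde S$ yields
\[
    \lambda_i(T) \leq \max_{0 \neq \tilde f \in \tilde S} R_T(\tilde f) \leq \max_{0 \neq f \in S'} R_{T'}(f) = \lambda_i(T'),
\]
completing the induction. There is no serious obstacle here: this is the Laplacian analogue of the Steklov monotonicity \cref{lem:monotone}, and the simplification compared to the Steklov setting is that $\RR^V$ is the correct ambient space for both $T$ and $T'$ simultaneously, so no boundary bookkeeping is required and harmonic extension can be replaced by the cruder but sufficient constant extension.
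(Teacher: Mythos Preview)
Your proof is correct and follows essentially the same approach as the paper's: extend test functions from $V(T')$ to $V(T)$ by constants so that the Dirichlet energy is unchanged while the $\ell^2$-norm can only grow, then apply the Courant--Fischer min--max principle. The paper treats the general subtree in one step (extending $f$ to be constant on each connected component of $T - E(T')$) rather than inducting leaf by leaf, but the underlying argument is identical.
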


\begin{proof}
    Let $\xi_1, \xi_2, \ldots, \xi_n$ be an orthogonal system of Laplacian eigenfunctions of $L(T)$ such that $\xi_i$ is a $\lambda_i(T)$-eigenfunction on $V(T)$ for $i = 1,2, \ldots, n$. 
    Let $\eta_1, \eta_2, \ldots, \eta_{n'}$ be an orthogonal system of Laplacian eigenfunctions of $L(T')$ such that $\eta_i$ is a $\lambda_i(T')$-eigenfunction on $V(T')$ for $i = 1,2, \ldots, n'$. 
    For a function $f$ on $V(T')$, we extend it to a function $\widetilde{f}$ on $V(T)$ such that $\widetilde{f}$ takes constant value in the same connected components of $T - E(T')$. 
    It is clear that $\lambda_1(T) = \lambda_1(T') = 0$. 
    Let $2 \leq i \leq n'$ be an integer. 
    Consider the function $\widetilde{f} = c_1 \widetilde{f}_1 + c_2 \widetilde{f}_2 + \cdots + c_i \widetilde{f}_i$, where $c_1, c_2, \ldots, c_i$ are constants that are not all zero such that $\lrangle{\widetilde{f}, \xi_j} = 0$ for every $j = 1,2, \ldots, i-1$. 
    Such choice of $c_1, c_2, \ldots, c_i$ exists because homogeneous linear system with $i-1$ equations on $i$ variables must have a nonzero solution. 
    Then
    \begin{align}
        \lambda_i(T) &\leq \frac{\sum_{uv \in E(T)}(\widetilde{f}(u) - \widetilde{f}(v))^2}{\sum_{v \in T} \widetilde{f}(v)^2} \\
        &\leq \frac{\sum_{uv \in E(T')}(f(u) - f(v))^2}{\sum_{v \in T'} f(v)^2} = \lambda_i(T'). \qedhere
    \end{align}
\end{proof}

\subsubsection{Laplacian eigenvalues of the crab graph}

In this subsection we determine the eigenvalues of $L(CG_{b_1, b_2;r})$. 

Note that we have an equitable partition of $CG_{b_1, b_2; r}$, namely the parts $V_{u, i}, V_{v, i}$ with $i = 0,1, \ldots, r$ given by
\begin{align}
    V_{u, 0} &= \set{u_0}, \\
    V_{u, i} &= \set{u_{i, 1}, \ldots, u_{i, b_1}}, 1 \leq i \leq r,  \\
    V_{v, 0} &= \set{v_0}, \\
    V_{v, i} &= \set{v_{i, 1}, \ldots, v_{i, b_2}}, 1 \leq i \leq r. 
\end{align} 

Let $\RR^V$ be the space of all real functions on $V$. 
Let $W$ be the subspace which takes constant value in each part. 
We first determine the eigenspace in $W^\perp$. 
It is obtained by lifting some eigenfunctions of $P_{2r+1}$ to $W^\perp$. 

Let $n$ be a positive integer. 
Recall that $L(P_n)$ is the Laplacian matrix of path graph with $n$ vertices. 
We denote by $B_{n-1}$ the matrix obtained by deleting one row and one column corresponding to one end of $P_n$ from $L(P_n)$. 
We denote by $H_{n-2}$ the matrix obtained by deleting two rows and two columns corresponding to two ends of $P_n$ from $L(P_n)$. 

\begin{lemma}[{\cite[Lemma 2.8]{guo2008ConjectureAlgebraicConnectivity}}]
    Set $\Phi(P_0) = 0, \Phi(B_0) = 1$, and $\Phi(H_0) = 1$. 
    Then we have
    \begin{enumerate}
        \item $\Phi(P_{n+1}) = (x-2) \Phi(P_n) - \Phi(P_{n-1})\ (n \geq 1)$; 
        \item $x \Phi(B_n) = \Phi(P_{n+1}) + \Phi(P_n)$; 
        \item $x \Phi(H_n) = \Phi(P_{n+1})\ (n \geq 1)$;
        \item $\Phi(P_m) \Phi(P_n) - \Phi(P_{m-1}) \Phi(P_{n+1}) = \Phi(P_{m-1}) \Phi(P_{n-1}) - \Phi(P_{m-2}) \Phi(P_{n})\ (m \geq 2, n \geq 1)$.
    \end{enumerate}
\end{lemma}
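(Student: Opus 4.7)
\medskip

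\noindent\textbf{Proof plan.} My plan is to establish (3) first via a matrix identity, and then bootstrap (1), (2), and (4) from (3) together with standard tridiagonal three-term recurrences. For (3), I would let $D$ denote the $n \times (n+1)$ signed vertex--edge incidence matrix of an arbitrary orientation of $P_{n+1}$. A direct computation of the entries of $D^T D$ and $D D^T$ gives $D^T D = L(P_{n+1})$ and $D D^T = H_n$ (diagonal entries all equal $2$ and off-diagonal entries all equal $-1$, since each oriented edge contributes $\pm 1$ to exactly two rows of $D$). Because $P_{n+1}$ is a connected tree, $\ker D$ is one-dimensional (spanned by the all-ones vector on vertices), so $D D^T$ is nonsingular of size $n$ and the nonzero spectra of $D^T D$ and $D D^T$ agree with matching multiplicities. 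Thus the spectrum of $L(P_{n+1})$ equals the spectrum of $H_n$ together with exactly one additional zero, giving $\Phi(P_{n+1}) = x\, \Phi(H_n)$.

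For (1) and (2), I would cofactor-expand along the last row of $xI - H_n$ (last diagonal entry $x-2$) and of $xI - B_n$ (last diagonal entry $x-1$), noting that deleting the last row and column in either case leaves $xI - H_{n-1}$, while deleting the last two leaves $xI - H_{n-2}$. This yields
\begin{align*}
    \Phi(H_n) &= (x-2)\,\Phi(H_{n-1}) - \Phi(H_{n-2}), \\
    \Phi(B_n) &= (x-1)\,\Phi(H_{n-1}) - \Phi(H_{n-2}),
\end{align*}
valid for $n \geq 2$ (and extending to $n=1$ with the convention $\Phi(H_{-1}) := 0$). Subtracting gives $\Phi(B_n) - \Phi(H_n) = \Phi(H_{n-1})$; multiplying by $x$ and applying (3) twice delivers (2). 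Multiplying the $H$-recurrence by $x$ and applying (3) three times delivers (1). Base cases at small $n$ will be checked directly against the conventions $\Phi(P_0) = 0$ and $\Phi(B_0) = \Phi(H_0) = 1$.

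Finally, I would derive (4) from (1) by rewriting the desired identity in the symmetric form
\begin{equation*}
    \bigl[\Phi(P_m) + \Phi(P_{m-2})\bigr]\,\Phi(P_n) = \Phi(P_{m-1})\,\bigl[\Phi(P_{n+1}) + \Phi(P_{n-1})\bigr],
\end{equation*}
and then applying (1) to each bracket: for $m \geq 2$ one has $\Phi(P_m) + \Phi(P_{m-2}) = (x-2)\Phi(P_{m-1})$, and for $n \geq 1$ one has $\Phi(P_{n+1}) + \Phi(P_{n-1}) = (x-2)\Phi(P_n)$, so both sides collapse to $(x-2)\,\Phi(P_{m-1})\,\Phi(P_n)$. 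The argument is essentially bookkeeping and no step is conceptually hard; the main pitfalls will be tracking signs carefully in $D D^T = H_n$ and synchronizing the base cases of the tridiagonal recurrences with the prescribed boundary conventions so that (1) and (2) hold all the way down to $n=1$.
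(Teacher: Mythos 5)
Your proposal is correct. Note that the paper offers no proof of this lemma at all --- it is imported verbatim from Guo's 2008 paper (Lemma 2.8 there) --- so there is no in-paper argument to compare against; what you have written is a valid self-contained verification. All four identities check out: the incidence-matrix factorization $D^{\top}D = L(P_{n+1})$, $DD^{\top} = H_n$ correctly yields (3) since $\ker D$ is one-dimensional and the nonzero spectra of $D^{\top}D$ and $DD^{\top}$ agree with multiplicity; the tridiagonal cofactor expansions for $\Phi(H_n)$ and $\Phi(B_n)$ are right, and subtracting them and invoking (3) gives (1) and (2); and the symmetric rewriting of (4) collapses both sides to $(x-2)\Phi(P_{m-1})\Phi(P_n)$ exactly as you say, with the ranges $m\ge 2$, $n\ge 1$ matching the hypotheses of (1). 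The one imprecision is the claim that $DD^{\top}$ has all off-diagonal entries equal to $-1$ for an \emph{arbitrary} orientation: two adjacent edges sharing a vertex $v$ contribute $D_{e,v}D_{f,v}=\pm 1$ depending on whether they agree in direction at $v$, so the entry is $-1$ only for the head-to-tail orientation of the path. This costs nothing --- either fix the consistent orientation at the outset, or observe that changing orientations conjugates $DD^{\top}$ by a diagonal $\pm 1$ matrix and hence leaves the characteristic polynomial unchanged --- but as written the parenthetical justification is not quite right. Your route through the edge Laplacian for (3) is arguably cleaner than the usual induction on tridiagonal determinants, since it explains structurally why the spectrum of $L(P_{n+1})$ is that of $H_n$ plus a single zero.
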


\begin{corollary}
    $\Phi(B_n) + \Phi(B_{n-1}) = \Phi(P_n)$.
\end{corollary}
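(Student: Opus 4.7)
The plan is to derive the identity by combining parts (1) and (2) of the preceding lemma; no new induction is needed.

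First I would apply part (2) of the lemma twice, once at index $n$ and once at index $n-1$, to get
\begin{align}
    x \Phi(B_n) &= \Phi(P_{n+1}) + \Phi(P_n), \\
    x \Phi(B_{n-1}) &= \Phi(P_n) + \Phi(P_{n-1}).
\end{align}
Adding these gives
\begin{align}
    x\bigl(\Phi(B_n) + \Phi(B_{n-1})\bigr) = \Phi(P_{n+1}) + 2\Phi(P_n) + \Phi(P_{n-1}).
\end{align}

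Next I would rewrite the right-hand side using the three-term recurrence of part (1), $\Phi(P_{n+1}) + \Phi(P_{n-1}) = (x-2)\Phi(P_n)$, so that
\begin{align}
    \Phi(P_{n+1}) + 2\Phi(P_n) + \Phi(P_{n-1}) = (x-2)\Phi(P_n) + 2\Phi(P_n) = x\Phi(P_n).
\end{align}
Combining the two displays yields $x(\Phi(B_n) + \Phi(B_{n-1})) = x\Phi(P_n)$, from which the claim $\Phi(B_n) + \Phi(B_{n-1}) = \Phi(P_n)$ follows upon cancelling $x$ (as both sides are polynomials in $x$, the identity as polynomials holds).

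There is no genuine obstacle here; the only mild subtlety is justifying the cancellation of $x$, which is immediate since the equality holds in $\mathbb{R}[x]$ and $x$ is a non-zero-divisor. The base cases $n = 0, 1$ are covered by the stated conventions $\Phi(B_0) = 1$ and the recurrence itself.
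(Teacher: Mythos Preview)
Your proof is correct and follows essentially the same route as the paper: multiply both sides by $x$, apply part (2) of the lemma at indices $n$ and $n-1$, then use the recurrence from part (1) to collapse the right-hand side to $x\Phi(P_n)$. The paper's version is terser but identical in substance; your remark about cancelling $x$ in $\mathbb{R}[x]$ is a reasonable clarification that the paper leaves implicit.
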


\begin{proof}
    \begin{align}
        x(\Phi(B_n) + \Phi(B_{n-1})) &= \Phi(P_{n+1}) + \Phi(P_n) + \Phi(P_n) + \Phi(P_{n-1}) \\
        &= (x-2) \Phi(P_n) - \Phi(P_{n-1}) + 2 \Phi(P_n) + \Phi(P_{n-1}) \\
        &= x \Phi(P_n). \qedhere
    \end{align}
\end{proof}

It is well-known that the roots of $\Phi(P_{n})$ are $4\sin^2 \frac{(i-1)\pi}{2n}$ for $i = 1,2, \ldots, n$, and the roots of $\Phi(B_{n})$ are $4 \sin^2 \frac{(2i-1)\pi}{4n+2}$ for $i = 1,2, \ldots, n$. 

\begin{lemma}\label{lem:writer}
    Let $\lambda = \lambda_{2i}(P_{2r+1})$ be the $(2i)$-th eigenvalue of $L(P_{2r+1})$ with $i = 1,2, \ldots, r$. 
    Then $W^\perp$ contains a $\lambda$-eigenspace of $L(CG_{b_1, b_2; r})$ with dimension $b_1 + b_2 - 2$.
\end{lemma}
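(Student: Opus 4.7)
The plan is to construct, for the given $\lambda = \lambda_{2i}(P_{2r+1})$, an explicit family of $\lambda$-eigenfunctions of $L(CG_{b_1, b_2; r})$ inside $W^\perp$ whose dimension is exactly $b_1 + b_2 - 2$. The driving observation is that any $\xi \in W^\perp$ must vanish on the singleton parts $\set{u_0}$ and $\set{v_0}$; once this vanishing is enforced, the eigenvalue equation on the graph decouples. Indeed, the equation at $u_0$ becomes $0 = \sum_j \xi(u_{1,j}) + \xi(v_0)$, which is automatic in $W^\perp$, and each pendant branch $u_0, u_{1,j}, \ldots, u_{r,j}$ then behaves independently as a path of length $r$ with Dirichlet boundary at $u_0$ and a free (Neumann) boundary at the leaf---that is, as the spectral problem for the matrix $B_r$ from the preceding subsection.

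The second step is to identify the spectrum of $B_r$ with the antisymmetric eigenvalues of $L(P_{2r+1})$. The reflection of $P_{2r+1}$ about its central vertex splits the eigenspaces into symmetric and antisymmetric parts; an antisymmetric eigenfunction vanishes at the center and, restricted to one half, is an eigenfunction of $B_r$. A direct check with $\phi_j(k) = \cos\bigl((k - \tfrac12)(j-1)\pi/(2r+1)\bigr)$ gives $\phi_j(r+1) = \cos((j-1)\pi/2)$, which vanishes iff $j$ is even, so the $r$ antisymmetric eigenvalues are exactly $\lambda_{2i}(P_{2r+1})$ for $i = 1, \ldots, r$, matching the hypothesis.

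Finally, let $g \colon \set{0, 1, \ldots, r} \to \RR$ denote the corresponding antisymmetric eigenfunction on the ``half-path'', indexed so that $g(0) = 0$ and $(g(1), \ldots, g(r))$ is an eigenvector of $B_r$. For each $(a_1, \ldots, a_{b_1}) \in \RR^{b_1}$ with $\sum_{j=1}^{b_1} a_j = 0$, I would define $\xi(u_0) = \xi(v_0) = 0$, $\xi(u_{i,j}) = a_j g(i)$ for $1 \le i \le r$, and $\xi \equiv 0$ on the entire $v$-side, and then verify $L\xi = \lambda \xi$ at three vertex types: interior branch vertex, leaf, and the center $u_0$. The first two reduce to the recurrence and Neumann condition for $g$; the third reduces to $0 = g(1)\sum_j a_j$, holding by the sum-zero constraint. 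This gives a $(b_1-1)$-dimensional family, and the mirror construction on the $v$-side yields an independent $(b_2-1)$-dimensional family with disjoint support, totalling $b_1 + b_2 - 2$. Membership in $W^\perp$ is immediate from $\xi(u_0) = \xi(v_0) = 0$ and the sum-zero conditions on each level. The main obstacle is the indexing step---matching the antisymmetric eigenvalues of $P_{2r+1}$ with the label $\lambda_{2i}$ and orienting $B_r$ so its Dirichlet end corresponds to $u_0$---but once this dictionary is fixed, the remainder is a direct substitution.
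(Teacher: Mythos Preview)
Your proposal is correct and follows essentially the same approach as the paper. The paper places the even-indexed (antisymmetric, center-vanishing) eigenfunctions of $P_{2r+1}$ on consecutive pairs of branches $u_{m,\cdot}\cup\{u_0\}\cup u_{m+1,\cdot}$ (and likewise on the $v$-side), whereas you parametrize the same space via arbitrary zero-sum coefficients $a_j$ against the half-path profile $g$; both constructions span the identical $(b_1+b_2-2)$-dimensional subspace of $W^\perp$.
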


\begin{proof}
    The $\lambda_i$-eigenfunction on $P_{2r+1} = w_1 \sim w_2 \sim \cdots \sim w_{2r+1}$ is given by $\xi(w_j) = (-1)^{j-1} (1 - 2 \cos \frac{(i-1)\pi}{n} + 2 \cos \frac{2(i-1)\pi}{n} - 2 \cos \frac{3(i-1)\pi}{n} + \cdots + (-1)^{j-1} 2 \cos \frac{(j-1) (i-1)\pi}{n})$ for $j = 1,2, \ldots, 2r+1$. 
    Note that $\xi(w_{r+1}) = 0$ for $\lambda = \lambda_{2i}(P_{2r+1})$ with $i = 1,2, \ldots, r$. 
    Consider the path $u_{m, r} \sim u_{m, r-1} \sim \cdots u_{m, 1} \sim u_{0} \sim u_{m+1, 1} \sim \cdots \sim u_{m+1, r}$ for $m = 1,2, \ldots, b_1 -1$. 
    We extend the $\lambda$-eigenfunction on $P_{2r+1}$ to $CG_{b_1, b_2; r}$ by assigning $0$ on the rest of vertices. 
    This gives us $b_1 - 1$ linearly independent $\lambda$-eigenfunctions of $L(CG_{b_1, b_2; r})$ in $W^\perp$. 
    Similarly, we can consider the path $v_{m, r} \sim v_{m, r-1} \sim \cdots v_{m, 1} \sim v_{0} \sim v_{m+1, 1} \sim \cdots \sim v_{m+1, r}$ for $m = 1,2, \ldots, b_2 -1$. 
    In total, we get $(b_1 + b_2 - 2)$ linear independent $\lambda$-eigenfunctions of $L(CG_{b_1, b_2; r})$ in $W^\perp$. 
\end{proof}

Next we determine the eigenfunction in $W$. 

Consider the quotient matrix $Q_{b_1, b_2; r}$ of $CG_{b_1, b_2; r}$ with parts $V_{u, i}, V_{v, i}$ given above. 
Define
\begin{align}
    Q_{b_1, b_2; r} = 
    \begin{bmatrix}
        b_1 + 1 & -b_1 \bm{e}_1^\top & -1 & \bm{0} \\
        -\bm{e}_1 & B_{r} & \bm{0} & \bm{0} \\
        -1 & \bm{0} & b_2 + 1 & -b_2 \bm{e}_1^\top \\
        \bm{0} & \bm{0} & -\bm{e}_1 & B_{r}
    \end{bmatrix},
\end{align}
where $\bm{e}_1$ is the column vector with the only nonzero entry $1$ at the first entry. 
Then $L(CG_{b_1, b_2; r}) P = P Q_{b_1, b_2; r}$, where $P$ consists of characteristic columns given by
\begin{align}
    P = 
    \begin{bmatrix}
        \bm{1}_{V_{u, 0}} & \bm{1}_{V_{u, 1}} & \cdots & \bm{1}_{V_{u, r}} & \bm{1}_{V_{v, 0}} & \bm{1}_{V_{v, 1}} & \cdots & \bm{1}_{V_{v, r}}
    \end{bmatrix}.
\end{align}

\begin{lemma}
    Let $\xi$ be a $\mu$-eigenfunction of $Q_{b_1, b_2; r}$. 
    Then $\xi$ induces a $\mu$-eigenfunction of $L(CG_{b_1, b_2; r})$ in $W$. 
\end{lemma}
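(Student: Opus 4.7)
The plan is to exploit the already-established intertwining relation $L(CG_{b_1,b_2;r})\,P = P\,Q_{b_1,b_2;r}$, where $P$ is the matrix of characteristic column vectors of the parts $V_{u,i}$ and $V_{v,i}$. Since $W$ is by definition the subspace of functions which are constant on each part, $W$ is exactly the column space of $P$, and the map $\xi \mapsto P\xi$ from $\RR^{2r+2}$ to $W$ is a linear bijection (the columns of $P$ are characteristic functions of disjoint nonempty sets, hence linearly independent).

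Given this setup, the induction is a one-line computation. First I would define the induced function to be $\widetilde{\xi} := P\xi \in W \subseteq \RR^{V(CG_{b_1,b_2;r})}$; concretely, $\widetilde{\xi}$ takes value $\xi_{u,i}$ on every vertex of $V_{u,i}$ and $\xi_{v,i}$ on every vertex of $V_{v,i}$. Then I would apply the intertwining relation:
\begin{align}
    L(CG_{b_1,b_2;r})\,\widetilde{\xi} = L(CG_{b_1,b_2;r})\,P\xi = P\,Q_{b_1,b_2;r}\,\xi = P(\mu\xi) = \mu\,\widetilde{\xi}.
\end{align}
Finally I would note that $\widetilde{\xi}\neq 0$ since $\xi\neq 0$ and $P$ has trivial kernel, so $\widetilde{\xi}$ is genuinely a $\mu$-eigenfunction of $L(CG_{b_1,b_2;r})$ lying in $W$.

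There is really no obstacle here; the content of the lemma is the general fact that an equitable partition yields a quotient matrix whose eigenpairs lift to eigenpairs of the original matrix on the space $W$ of functions constant on the parts. The only thing worth double-checking is the intertwining identity $LP = PQ_{b_1,b_2;r}$, which amounts to verifying that the entries of $Q_{b_1,b_2;r}$ displayed just before the lemma correctly record, for each part, the row-sum of $L$ against the characteristic vector of every other part — this is a direct combinatorial check from the structure of $CG_{b_1,b_2;r}$ (each $u_0$ is adjacent to $b_1$ vertices of $V_{u,1}$ and the single vertex $v_0$, each leaf-level vertex has degree $1$, etc.), and is implicit in the paragraph preceding the lemma statement.
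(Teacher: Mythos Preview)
Your proposal is correct and matches the paper's proof essentially verbatim: the paper's entire argument is the single line $L(CG_{b_1,b_2;r})\,P\xi = P\,Q_{b_1,b_2;r}\,\xi = \mu P\xi$. Your additional remarks about $P$ having trivial kernel and about verifying the intertwining identity are reasonable elaborations, but the core computation is identical.
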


\begin{proof}
    Note that $L(CG_{b_1, b_2; r}) P \xi = P Q_{b_1, b_2; r} \xi = \mu P \xi$. 
\end{proof}

The second-smallest eigenvalue of $CG_{b_1, b_2; r}$ must come from $Q_{b_1, b_2; r}$.

\begin{lemma}
    $\lambda_2(CG_{b_1, b_2; r}) = \mu_2(Q_{b_1, b_2; r}) \leq \lambda_2(P_{2r+2})$.
\end{lemma}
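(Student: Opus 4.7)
The plan is to split the spectrum of $L(CG_{b_1,b_2;r})$ using the equitable partition already introduced. Since $L$ is symmetric and $W$ is $L$-invariant, so is $W^\perp$, so the Laplacian spectrum of $CG_{b_1,b_2;r}$ is the disjoint union (with multiplicities) of the spectrum of $Q_{b_1,b_2;r}$ and the spectrum of $L$ restricted to $W^\perp$. The all-ones vector lies in $W$ and is annihilated by $L$, giving $\mu_1(Q_{b_1,b_2;r}) = \lambda_1(CG_{b_1,b_2;r}) = 0$.

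Next I would identify every eigenvalue of $L$ acting on $W^\perp$. By \cref{lem:writer}, for each $i = 1, 2, \ldots, r$ the subspace $W^\perp$ contains a $\lambda_{2i}(P_{2r+1})$-eigenspace of dimension $b_1 + b_2 - 2$. Since the values $\lambda_{2i}(P_{2r+1}) = 4\sin^2\frac{(2i-1)\pi}{2(2r+1)}$ are pairwise distinct, these eigenspaces are mutually orthogonal and contribute $r(b_1+b_2-2)$ linearly independent vectors in $W^\perp$. A direct count gives $\dim W^\perp = (b_1+b_2)r + 2 - (2r+2) = r(b_1+b_2-2)$, so these eigenspaces exhaust $W^\perp$. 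In particular, every eigenvalue of $L$ on $W^\perp$ is at least $\lambda_2(P_{2r+1}) = 4\sin^2\frac{\pi}{4r+2}$.

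Finally, $CG_{b_1, b_2; r}$ contains $P_{2r+2}$ as a subtree: pick one leaf on each side and the unique connecting path traverses $r$ edges inside the first spider, the central edge, and $r$ edges inside the second spider, for $2r+1$ edges and $2r+2$ vertices. Applying \cref{lem:naturally} yields $\lambda_2(CG_{b_1, b_2; r}) \leq \lambda_2(P_{2r+2}) = 4\sin^2\frac{\pi}{4r+4}$. Since $\sin^2$ is strictly increasing on $(0, \pi/2)$, we have $4\sin^2\frac{\pi}{4r+4} < 4\sin^2\frac{\pi}{4r+2}$, so $\lambda_2(CG_{b_1,b_2;r})$ is strictly smaller than every eigenvalue of $L$ on $W^\perp$ and must therefore come from $W$, i.e., equal $\mu_2(Q_{b_1,b_2;r})$. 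Combining these observations gives $\mu_2(Q_{b_1,b_2;r}) = \lambda_2(CG_{b_1,b_2;r}) \leq \lambda_2(P_{2r+2})$, which is the desired statement.

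The whole argument is a direct assembly of the preceding lemmas rather than a separate computation; the only delicate point is the dimension count certifying that the eigenspaces produced by \cref{lem:writer} span all of $W^\perp$, for without this we could not rule out an even smaller $W^\perp$-eigenvalue sitting below $\mu_2(Q_{b_1,b_2;r})$.
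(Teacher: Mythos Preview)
Your proof is correct and follows essentially the same route as the paper: count dimensions to see that the $W^\perp$-eigenvalues are exactly the even-indexed $\lambda_{2i}(P_{2r+1})$ found in \cref{lem:writer}, use the subtree $P_{2r+2}$ together with \cref{lem:naturally} to bound $\lambda_2(CG_{b_1,b_2;r})$ strictly below $\lambda_2(P_{2r+1})$, and conclude that the second eigenvalue must lie in $W$, hence equals $\mu_2(Q_{b_1,b_2;r})$. Your write-up is in fact more explicit than the paper's, spelling out the $L$-invariance of $W$ and $W^\perp$, the exact dimension count, and the numerical comparison $4\sin^2\tfrac{\pi}{4r+4} < 4\sin^2\tfrac{\pi}{4r+2}$.
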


\begin{proof}
    Note that $W^\perp$ contains $r(b_1 + b_2 -2)$ linearly independent eigenfunctions, and $W$ contains $2r+2$ linearly independent eigenfunctions. 
    In total, we have $r(b_1 + b_2) + 2$ linearly independent eigenfunctions. 
    So we have determined all eigenvalues, which comes from eigenvalues of $L(P_{2r+1})$ and $Q_{b_1, b_2; r}$. 
    Since $P_{2r+2}$ is a subtree of $CG_{b_1, b_2; r}$, we have $\lambda_2(CG_{b_1, b_2; r}) \leq \lambda_2(P_{2r+2}) < \lambda_2(P_{2r+1})$. 
    Therefore, $\lambda_2(CG_{b_1, b_2; r}) = \mu_2(Q_{b_1, b_2; r})$.
\end{proof}

Next we compare the algebraic connectivity of $CG_{b_1, b_2; r}$ with fixed leaf number $b = b_1 + b_2$.

\begin{lemma}
    \begin{align}
        \Phi(Q_{b_1, b_2; r}) &= (x^2 - (b_1 + b_2 + 2)x + b_1 b_2 + b_1 + b_2) \Phi(B_{r}) \Phi(B_r) \\
        &-((b_1 + b_2)x - 2b_1 b_2 - b_1 - b_2) \Phi(B_r) \Phi(B_{r-1}) \\
        &+ b_1 b_2 \Phi(B_{r-1}) \Phi(B_{r-1}).
    \end{align}
\end{lemma}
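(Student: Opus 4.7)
The plan is to evaluate $\Phi(Q_{b_1, b_2; r}) = \det(xI - Q_{b_1, b_2; r})$ by exploiting the natural $2 \times 2$ block decomposition of $xI - Q_{b_1, b_2; r}$ into two $(r+1) \times (r+1)$ diagonal blocks
\[
A = \begin{pmatrix} x - b_1 - 1 & b_1 \bm{e}_1^\top \\ \bm{e}_1 & xI - B_r \end{pmatrix}, \qquad C = \begin{pmatrix} x - b_2 - 1 & b_2 \bm{e}_1^\top \\ \bm{e}_1 & xI - B_r \end{pmatrix},
\]
together with off-diagonal blocks both equal to the rank-one matrix $\bm{e}_1 \bm{e}_1^\top$ encoding the single edge joining the two centers $u_0$ and $v_0$.

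First I would apply the Schur complement formula $\det\begin{pmatrix} A & E \\ F & C \end{pmatrix} = \det(A)\det(C - F A^{-1} E)$ together with the matrix determinant lemma. Since $F A^{-1} E = (\bm{e}_1^\top A^{-1} \bm{e}_1)\, \bm{e}_1 \bm{e}_1^\top$ and by Cramer's rule $(A^{-1})_{11} = \Phi(B_r)/\det(A)$ (the $(1,1)$ minor of $A$ being $xI - B_r$), a short computation collapses the determinant to
\[
\Phi(Q_{b_1,b_2;r}) = \det(A)\det(C) - \Phi(B_r)^2.
\]

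Next I would compute $\det(A)$ by cofactor expansion along the first row: the $(1,1)$ cofactor contributes $(x - b_1 - 1)\Phi(B_r)$, while the $(1,2)$ cofactor, after a further expansion along its column with a single nonzero entry, reduces to $\Phi(B_{r-1})$, giving
\[
\det(A) = (x - b_1 - 1)\Phi(B_r) - b_1 \Phi(B_{r-1}),
\]
and analogously $\det(C) = (x - b_2 - 1)\Phi(B_r) - b_2 \Phi(B_{r-1})$. Substituting and expanding the product produces the coefficient $(b_1+1)(b_2+1) = b_1 b_2 + b_1 + b_2 + 1$ in front of $\Phi(B_r)^2$; subtracting the $-\Phi(B_r)^2$ correction exactly absorbs the $+1$, and collecting the cross terms gives $-((b_1+b_2)x - 2b_1 b_2 - b_1 - b_2)\Phi(B_r)\Phi(B_{r-1})$, yielding the claimed identity.

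The main obstacle is bookkeeping: tracking signs carefully both in the Schur complement step and in the cofactor expansions, and correctly identifying the $(r-1) \times (r-1)$ submatrix that appears after deleting two rows and two columns of $A$ as $xI - B_{r-1}$ (diagonal $(2, 2, \ldots, 2, 1)$) rather than some other restricted Laplacian such as $xI - H_{r-1}$. Once the rank-one nature of the coupling blocks is recognized and the minor structure is organized cleanly, the closed-form identity drops out.
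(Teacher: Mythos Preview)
Your proposal is correct and amounts to the same computation as the paper's: the paper invokes Laplace expansion on $xI - Q_{b_1,b_2;r}$ along the two ``center'' rows to obtain the four-term expression $(x-b_1-1)(x-b_2-1)\Phi(B_r)^2 - (x-b_1-1)b_2\Phi(B_r)\Phi(B_{r-1}) - (x-b_2-1)b_1\Phi(B_r)\Phi(B_{r-1}) + b_1 b_2\Phi(B_{r-1})^2$ (with the implicit $-\Phi(B_r)^2$ correction from the coupling entry), which is exactly your $\det(A)\det(C) - \Phi(B_r)^2$ expanded out. Your Schur-complement packaging and the explicit identification of the relevant minor as $xI - B_{r-1}$ (not $xI - H_{r-1}$) are a cleaner way to organize the same cofactor bookkeeping.
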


\begin{proof}
    By Laplace expansion, we have
    \begin{align}
        \Phi(Q_{b_1, b_2; r}) =& (x - b_1 - 1)(x - b_2 - 1) \Phi(B_{r}) \Phi(B_{r}) \\
        & -(x - b_1 - 1)b_2 \Phi(B_{r-1}) \Phi(B_{r}) \\
        & -(x - b_2 - 1)b_1 \Phi(B_{r}) \Phi(B_{r-1}) \\
        & +b_1 b_2 \Phi(B_{r-1}) \Phi(B_{r-1}) \\
        =& (x^2 - (b_1 + b_2 + 2)x + b_1 b_2 + b_1 + b_2) \Phi(B_{r}) \Phi(B_r) \\
        & -((b_1 + b_2)x - 2b_1 b_2 - b_1 - b_2) \Phi(B_r) \Phi(B_{r-1}) \\
        & + b_1 b_2 \Phi(B_{r-1}) \Phi(B_{r-1}). \qedhere
    \end{align}
\end{proof}

\begin{corollary}
    \begin{align}
        \Phi(Q_{b_1, b_2; 1}) &= x(x^3 - (b1+b_2+4) x^2 + (b_1 b_2 + 2b_1 + 2b_2 + 5) x - (b_1 + b_2 + 2)).
    \end{align}
\end{corollary}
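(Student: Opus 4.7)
The plan is to derive this corollary as a direct specialization of the preceding lemma at $r=1$, so essentially no new ideas are needed beyond an expansion.

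First I would identify the small base cases: by convention $\Phi(B_0) = 1$, and $B_1$ is the $1 \times 1$ matrix obtained from $L(P_2) = \bigl[\begin{smallmatrix} 1 & -1 \\ -1 & 1\end{smallmatrix}\bigr]$ by deleting the row and column of one end, so $\Phi(B_1) = x - 1$. These are the only ingredients from the recurrence that will enter.

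Next I would substitute $r = 1$ into the expression
\begin{align}
    \Phi(Q_{b_1, b_2; r}) &= \bigl(x^2 - (b_1 + b_2 + 2)x + b_1 b_2 + b_1 + b_2\bigr) \Phi(B_{r})^2 \\
    &\quad - \bigl((b_1 + b_2)x - 2 b_1 b_2 - b_1 - b_2\bigr) \Phi(B_r) \Phi(B_{r-1}) \\
    &\quad + b_1 b_2 \, \Phi(B_{r-1})^2
\end{align}
from the preceding lemma, obtaining
\begin{align}
    \Phi(Q_{b_1, b_2; 1}) &= \bigl(x^2 - (b_1 + b_2 + 2)x + b_1 b_2 + b_1 + b_2\bigr)(x-1)^2 \\
    &\quad - \bigl((b_1+b_2)x - 2 b_1 b_2 - b_1 - b_2\bigr)(x-1) + b_1 b_2.
\end{align}

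The remaining work is a bookkeeping expansion. Collecting coefficients of $x^4, x^3, x^2, x^1, x^0$ yields $1$, $-(b_1+b_2+4)$, $b_1 b_2 + 2 b_1 + 2 b_2 + 5$, $-(b_1+b_2+2)$, and $0$, respectively. The vanishing constant term is expected since $Q_{b_1, b_2; 1}$, being the quotient of the Laplacian by an equitable partition, must have $0$ as an eigenvalue (with the all-ones vector in its kernel), so $x$ factors out. Factoring gives exactly the claimed expression, and I would present the computation in a single aligned display, being careful with the sign of the middle summand, since that is the only place where a sign slip could easily occur.
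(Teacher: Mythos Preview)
Your proposal is correct and takes essentially the same approach as the paper: the corollary is stated without proof, being an immediate specialization of the preceding lemma at $r=1$ with $\Phi(B_0)=1$ and $\Phi(B_1)=x-1$, followed by expansion. Your coefficient computation is accurate, and the observation that the vanishing constant term reflects the all-ones kernel vector of the quotient matrix is a nice sanity check.
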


\begin{corollary}
    \begin{align}
        \Phi(Q_{1, b-1; 1}) &= x(x^3 - (b+4) x^2 + (3b+4) x - b - 2).
    \end{align}
    Hence, $\lambda_2(CG_{1, b-1; 1})$ is equal to the smallest root of $x^3 - (b+4) x^2 + (3b+4) x - b - 2 = 0$.
\end{corollary}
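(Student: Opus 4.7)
The plan is to obtain this corollary as a direct specialization of the preceding corollary, which gives $\Phi(Q_{b_1, b_2; 1})$ in a factored form for general $b_1, b_2$. I would substitute $b_1 = 1$ and $b_2 = b-1$ and simplify the three coefficients of the cubic factor:
\begin{align*}
    b_1 + b_2 + 4 &= 1 + (b-1) + 4 = b+4, \\
    b_1 b_2 + 2 b_1 + 2 b_2 + 5 &= (b-1) + 2 + 2(b-1) + 5 = 3b+4, \\
    b_1 + b_2 + 2 &= 1 + (b-1) + 2 = b+2,
\end{align*}
which yields $\Phi(Q_{1, b-1; 1}) = x(x^3 - (b+4) x^2 + (3b+4) x - (b+2))$ as stated. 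This part is a routine verification with no obstacle.

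For the second assertion, I would invoke the earlier lemma that $\lambda_2(CG_{b_1, b_2; r}) = \mu_2(Q_{b_1, b_2; r})$. Applied with $b_1 = 1, b_2 = b-1, r = 1$, this identifies $\lambda_2(CG_{1, b-1; 1})$ with the second-smallest eigenvalue of the $4 \times 4$ matrix $Q_{1, b-1; 1}$. The factorization above exhibits one eigenvalue equal to $0$ (corresponding to the all-ones characteristic vector, i.e.\ $\mu_1 = \lambda_1 = 0$), while the remaining three eigenvalues are the roots of the cubic $x^3 - (b+4) x^2 + (3b+4) x - (b+2)$.

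To conclude that $\lambda_2(CG_{1, b-1; 1})$ is the \emph{smallest} root of this cubic, I need to confirm that the three cubic roots are all strictly positive, so that the smallest of them is in fact $\mu_2$ rather than $\mu_1$. This follows because the Laplacian $L(CG_{1, b-1; 1})$ is positive semidefinite with a one-dimensional kernel (the graph is connected), and the eigenvalues of $Q_{1, b-1; 1}$ are a subset of the Laplacian eigenvalues (via the equitable partition correspondence $L \cdot P = P \cdot Q$); hence exactly one eigenvalue of $Q_{1, b-1; 1}$ is zero, and the other three are strictly positive. The smallest positive root of the cubic is therefore precisely $\lambda_2(CG_{1, b-1; 1})$.

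The argument is essentially mechanical; there is no serious obstacle beyond the routine algebraic substitution and the observation that the cubic's roots are all positive, both of which are immediate from what has already been established in the preceding lemmas and corollaries.
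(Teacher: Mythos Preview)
Your proposal is correct and matches the paper's intended approach: the paper states this corollary with no proof, treating it as an immediate specialization of the preceding general formula for $\Phi(Q_{b_1,b_2;1})$ combined with the earlier lemma $\lambda_2(CG_{b_1,b_2;r}) = \mu_2(Q_{b_1,b_2;r})$. Your added justification that the three cubic roots are strictly positive (via the equitable-partition correspondence and connectedness of the graph) fills in a small detail the paper leaves implicit.
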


\begin{lemma}
    Let $1 \leq b_1 \leq b_2 - 1$ be integers. 
    Then
    \begin{align}
        \Phi(Q_{b_1 + 1, b_2 - 1; r}) - \Phi(Q_{b_1, b_2; r}) &= (b_2 - b_1 - 1) (\Phi(B_{r-1}) + \Phi(B_r))^2 \\
        &= (b_2 - b_1 - 1) \Phi^2 (P_r) \geq 0.
    \end{align}
\end{lemma}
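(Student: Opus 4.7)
The plan is to obtain the identity by direct substitution into the closed form of $\Phi(Q_{b_1, b_2; r})$ already derived above, exploiting the observation that the transformation $(b_1, b_2) \mapsto (b_1+1, b_2-1)$ preserves the sum $b_1+b_2$ and shifts the product $b_1 b_2$ by exactly $b_2 - b_1 - 1$.

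First I would set $s = b_1 + b_2$ and $p = b_1 b_2$, and note that under the replacement $(b_1, b_2) \to (b_1+1, b_2-1)$ we have $s' = s$ but $p' = (b_1+1)(b_2-1) = p + (b_2 - b_1 - 1)$. Looking at the lemma's formula
\begin{align}
    \Phi(Q_{b_1, b_2; r}) &= (x^2 - (s+2)x + p + s)\Phi(B_r)^2 \\
    &\quad - (sx - 2p - s)\Phi(B_r)\Phi(B_{r-1}) + p\, \Phi(B_{r-1})^2,
\end{align}
only the coefficients involving $p$ are affected. Writing $\Delta = b_2 - b_1 - 1$, the $\Phi(B_r)^2$ term contributes $\Delta$, the cross term contributes $+2\Delta$ (since $-sx - (-2p - s) = -sx + 2p + s$, and the $p$-coefficient is $+2$), and the $\Phi(B_{r-1})^2$ term contributes $\Delta$. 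Collecting yields
\begin{align}
    \Phi(Q_{b_1+1, b_2-1; r}) - \Phi(Q_{b_1, b_2; r}) = \Delta\bigl(\Phi(B_r)^2 + 2\Phi(B_r)\Phi(B_{r-1}) + \Phi(B_{r-1})^2\bigr),
\end{align}
which factors as the perfect square $\Delta (\Phi(B_r) + \Phi(B_{r-1}))^2$.

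Next I would invoke the corollary $\Phi(B_r) + \Phi(B_{r-1}) = \Phi(P_r)$ to rewrite this as $(b_2 - b_1 - 1)\Phi^2(P_r)$. Non-negativity is then immediate: the hypothesis $b_1 \leq b_2 - 1$ forces $\Delta \geq 0$, and $\Phi^2(P_r)$ is a square, so the whole expression is non-negative as a polynomial identity (and pointwise for real $x$).

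I do not expect any serious obstacle: the argument is purely symbolic bookkeeping, and the only non-trivial ingredient, the identity $\Phi(B_r) + \Phi(B_{r-1}) = \Phi(P_r)$, is already a corollary established earlier in this subsection. The only care needed is to track the signs correctly when subtracting the two characteristic polynomials, so that the cross-term coefficient indeed comes out to $+2\Delta$ rather than some other value, thereby producing a perfect square.
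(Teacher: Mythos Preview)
Your proof is correct and is exactly the ``direct computation'' the paper alludes to, just written out in full: you use the previously established closed form for $\Phi(Q_{b_1,b_2;r})$, observe that only the product $p=b_1b_2$ changes (by $\Delta=b_2-b_1-1$) under the shift, collect the resulting $\Delta$, $2\Delta$, $\Delta$ coefficients into a perfect square, and then apply the corollary $\Phi(B_r)+\Phi(B_{r-1})=\Phi(P_r)$. This matches the paper's approach precisely.
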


\begin{proof}
    It follows from direct computation. 
\end{proof}

\begin{corollary}
    Let $1 \leq b_1 \leq b_2 - 1$ be integers. 
    Then 
    $\mu_2(Q_{b_1 + 1, b_2 - 1; r}) \leq \mu_2(Q_{b_1, b_2; r})$. 
\end{corollary}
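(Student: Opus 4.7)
The plan is to leverage the polynomial identity from the preceding lemma together with a sign analysis of the characteristic polynomials at the candidate eigenvalue.

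First, I recall from the discussion preceding this corollary that $\Phi(Q_{b_1, b_2; r})$ is a monic polynomial of degree $2r+2$ whose roots are the eigenvalues $0 = \mu_1 < \mu_2 \leq \mu_3 \leq \cdots \leq \mu_{2r+2}$ of $Q_{b_1, b_2; r}$, and that $\mu_2 = \mu_2(Q_{b_1,b_2;r}) = \lambda_2(CG_{b_1, b_2;r}) > 0$. A sign count from the factorisation $\Phi(Q_{b_1, b_2;r})(x) = x \prod_{i=2}^{2r+2} (x - \mu_i)$ gives that $\Phi(Q_{b_1, b_2;r})(x) < 0$ on the open interval $(0, \mu_2)$ (since $2r+1$ of the $\mu_i$ lie to the right of such $x$).

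Next, I evaluate $\Phi(Q_{b_1+1, b_2-1; r})$ at $\mu_2(Q_{b_1, b_2; r})$. By the preceding lemma,
\begin{align}
    \Phi(Q_{b_1+1, b_2-1; r})(\mu_2) = \Phi(Q_{b_1, b_2; r})(\mu_2) + (b_2 - b_1 - 1) \Phi^2(P_r)(\mu_2) = (b_2 - b_1 - 1)\Phi^2(P_r)(\mu_2) \geq 0,
\end{align}
since $\mu_2$ is a root of $\Phi(Q_{b_1, b_2; r})$ and the second summand is a non-negative multiple of a square.

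Finally, the same sign analysis applied to $\Phi(Q_{b_1+1, b_2-1; r})$ shows that this polynomial is strictly negative on $(0, \mu_2(Q_{b_1+1, b_2-1; r}))$. Since it is non-negative at $\mu_2(Q_{b_1, b_2; r}) > 0$, the intermediate value theorem forces $\mu_2(Q_{b_1+1, b_2-1; r}) \leq \mu_2(Q_{b_1, b_2; r})$. The main subtlety to address is the borderline case where $\Phi(Q_{b_1+1, b_2-1; r})(\mu_2) = 0$, which can happen when $b_2 = b_1 + 1$ or when $\mu_2$ happens to be a root of $\Phi(P_r)$; in either case $\mu_2$ is itself a root of $\Phi(Q_{b_1+1, b_2-1; r})$, hence $\mu_2(Q_{b_1+1, b_2-1; r}) \leq \mu_2$, still giving the desired inequality.
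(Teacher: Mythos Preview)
Your argument is correct and follows the same approach as the paper's proof: both exploit the preceding polynomial identity to show $\Phi(Q_{b_1+1,b_2-1;r})(\mu_2(Q_{b_1,b_2;r})) \geq 0$, combine this with the fact that $\Phi(Q_{b_1+1,b_2-1;r})$ is negative immediately to the right of $0$ (since the degree $2r+2$ is even and $0$ is a simple root), and conclude via a sign/intermediate-value argument that $\mu_2(Q_{b_1+1,b_2-1;r}) \leq \mu_2(Q_{b_1,b_2;r})$. Your treatment is slightly more explicit about the sign analysis and the borderline case, but the substance is identical.
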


\begin{proof} 
    Note that the order of $\Phi(Q_{b_1+1, b_2-1; r})$ is $2r+2$ (even). 
    Since $0$ is a simple eigenvalue for $Q_{b_1+1, b_2-1; r}$, we have $\Phi(Q_{b_1+1, b_2-1; r}, \epsilon) < 0$ for $\epsilon > 0$ small enough. 
    Let $\mu = \mu_2(Q_{b_1, b_2; r}) > 0$ be the second-smallest eigenvalue of $Q_{b_1, b_2; r}$. 
    Then $\Phi(Q_{b_1 + 1, b_2 - 1; r}, \mu) = \Phi(Q_{b_1 + 1, b_2 - 1; r}, \mu) - \Phi(Q_{b_1, b_2; r}, \mu) \geq 0$ implies that $\mu_2(Q_{b_1 + 1, b_2 - 1; r}) \leq \mu = \mu_2(Q_{b_1, b_2; r})$.
\end{proof}

\begin{corollary}
    Let $1 \leq b_1 \leq b_2 - 1$ be integers. 
    Then 
    $\lambda_2(CG_{b_1 + 1, b_2 - 1; r}) \leq \lambda_2(CG_{b_1, b_2; r})$. 
\end{corollary}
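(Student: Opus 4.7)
The plan is to combine the two preceding results on the quotient matrices $Q_{b_1, b_2; r}$ with the identification between $\lambda_2$ of the crab graph and $\mu_2$ of its quotient matrix. Since every ingredient has already been established in the preceding paragraphs, this corollary is essentially immediate.

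More concretely, I would proceed as follows. First, recall the lemma asserting $\lambda_2(CG_{b_1, b_2; r}) = \mu_2(Q_{b_1, b_2; r})$; applying it to both $(b_1, b_2)$ and $(b_1+1, b_2-1)$ gives
\begin{align}
    \lambda_2(CG_{b_1, b_2; r}) = \mu_2(Q_{b_1, b_2; r}), \quad \lambda_2(CG_{b_1+1, b_2-1; r}) = \mu_2(Q_{b_1+1, b_2-1; r}).
\end{align}
This reduces the statement to the inequality between second-smallest eigenvalues of the two quotient matrices. Next, I would invoke the immediately preceding corollary, which asserts precisely that $\mu_2(Q_{b_1+1, b_2-1; r}) \leq \mu_2(Q_{b_1, b_2; r})$. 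Chaining these two facts yields $\lambda_2(CG_{b_1+1, b_2-1; r}) \leq \lambda_2(CG_{b_1, b_2; r})$, as required.

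There is no real obstacle here: the substantive work was done in establishing (i) that $W^\perp$ contributes only eigenvalues coming from $L(P_{2r+1})$, each of which is at least $\lambda_2(P_{2r+1}) > \lambda_2(P_{2r+2})$, so that $\lambda_2$ of the crab graph must arise from the quotient $Q_{b_1, b_2; r}$; and (ii) that the characteristic polynomial difference $\Phi(Q_{b_1+1, b_2-1; r}) - \Phi(Q_{b_1, b_2; r}) = (b_2 - b_1 - 1)\Phi^2(P_r) \geq 0$ together with a sign argument at $\mu = \mu_2(Q_{b_1, b_2; r})$ forces the monotonicity of $\mu_2$. Given those, the corollary is a one-line composition of two already-proved implications.
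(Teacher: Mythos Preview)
Your proposal is correct and matches the paper's intended argument: the paper states this corollary without proof because it is indeed an immediate consequence of the identification $\lambda_2(CG_{b_1,b_2;r}) = \mu_2(Q_{b_1,b_2;r})$ together with the preceding corollary $\mu_2(Q_{b_1+1,b_2-1;r}) \leq \mu_2(Q_{b_1,b_2;r})$, exactly as you lay out.
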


In fact, the inequality is strict if $b_2 > b_1 + 1$. 

\begin{lemma}
    Let $1 \leq b_1 < b_2 - 1$ be integers. 
    Then 
    $\lambda_2(CG_{b_1 + 1, b_2 - 1; r}) < \lambda_2(CG_{b_1, b_2; r})$. 
\end{lemma}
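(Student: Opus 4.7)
The plan is to upgrade the weak inequality of the preceding corollary to a strict one by showing that the polynomial identity from the earlier lemma gives a \emph{strictly} positive value at $\mu := \mu_2(Q_{b_1, b_2; r}) = \lambda_2(CG_{b_1, b_2; r})$. Explicitly, we have
\begin{align}
    \Phi(Q_{b_1+1, b_2-1; r}, x) - \Phi(Q_{b_1, b_2; r}, x) = (b_2 - b_1 - 1) \Phi^2(P_r, x),
\end{align}
and when $b_2 > b_1 + 1$ the prefactor $b_2 - b_1 - 1$ is positive. So the whole task reduces to checking that $\Phi(P_r, \mu) \neq 0$.

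To rule out $\Phi(P_r, \mu) = 0$, we compare $\mu$ to the smallest positive root of $\Phi(P_r)$. By the earlier lemma we have $0 < \mu \leq \lambda_2(P_{2r+2}) = 4\sin^2\frac{\pi}{4r+4}$, while the positive roots of $\Phi(P_r)$ are $4\sin^2 \frac{(i-1)\pi}{2r}$ for $i = 2, \ldots, r$, whose minimum is $\lambda_2(P_r) = 4\sin^2 \frac{\pi}{2r}$ (when $r \geq 2$; when $r = 1$, $\Phi(P_1, x) = x$ has no positive root at all). Since $\frac{\pi}{4r+4} < \frac{\pi}{2r}$ for every $r \geq 1$, we obtain $\mu < \lambda_2(P_r)$, so $\mu$ lies strictly between $0$ and the smallest positive root of $\Phi(P_r)$, giving $\Phi^2(P_r, \mu) > 0$.

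Plugging this into the identity yields $\Phi(Q_{b_1+1, b_2-1; r}, \mu) > 0$. Combined with $\Phi(Q_{b_1+1, b_2-1; r}, \epsilon) < 0$ for all sufficiently small $\epsilon > 0$ (which follows as in the preceding corollary, since $0$ is a simple eigenvalue and the polynomial has even degree $2r+2$), the intermediate value theorem produces a root of $\Phi(Q_{b_1+1, b_2-1; r}, \cdot)$ in the open interval $(0, \mu)$. This root is a positive eigenvalue of $Q_{b_1+1, b_2-1;r}$, hence $\mu_2(Q_{b_1+1, b_2-1; r}) < \mu$, and translating back through the identification $\lambda_2(CG) = \mu_2(Q)$ gives the desired strict inequality.

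The only real obstacle is the verification that $\mu$ does not happen to coincide with a positive root of $\Phi(P_r)$; this is handled by the chain $\mu \leq \lambda_2(P_{2r+2}) < \lambda_2(P_r)$ above, and is the whole reason the upper bound $\mu_2(Q_{b_1, b_2; r}) \leq \lambda_2(P_{2r+2})$ was proved earlier. Everything else is a sign-chase on the characteristic polynomial.
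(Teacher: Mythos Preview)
Your proposal is correct and takes essentially the same approach as the paper: the decisive step in both is ruling out $\Phi(P_r,\mu)=0$ via the chain $0<\mu\le\lambda_2(P_{2r+2})<\lambda_2(P_r)$. The paper packages this as a short contradiction argument (if equality held, both characteristic polynomials would vanish at $\lambda$, forcing $\Phi(P_r,\lambda)=0$), whereas you spell out the direct intermediate-value-theorem version; these are the same proof in different clothing.
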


\begin{proof}
    Suppose $\lambda = \lambda_2(CG_{b_1 + 1, b_2 - 1; 1}) = \lambda_2(CG_{b_1, b_2; 1})$. 
    Then $(b_2 - b_1 - 1) \Phi^2 (P_r, \lambda) = 0$. 
    Hence, $\Phi(P_r, \lambda) = 0$.
    But $0 < \lambda \leq \lambda_2(P_{2r+2}) < \lambda_2(P_r)$. 
    Contradiction. 
\end{proof}

\begin{corollary}
    Let $1 \leq b_1 < b$ be integers. 
    Then 
    $\lambda_2(CG_{b_1, b - b_1; r}) \leq \lambda_2(CG_{1, b - 1; r})$. 
    The equality holds if and only if $b_1 = 1$ or $b-1$. 
\end{corollary}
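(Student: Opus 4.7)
The plan is to deduce this corollary directly by iterating the two preceding lemmas. First I would invoke the graph isomorphism $CG_{b_1, b - b_1; r} \cong CG_{b - b_1, b_1; r}$ to reduce to the case $b_1 \leq b - b_1$, i.e.\ $b_1 \leq b/2$. Then, starting from the pair $(1, b-1)$ and repeatedly applying the preceding corollary (each step shifts one leaf from the larger side to the smaller side by the operation $(k, b-k) \mapsto (k+1, b-k-1)$), I would obtain the monotone chain
\begin{align}
\lambda_2(CG_{1, b-1; r}) \geq \lambda_2(CG_{2, b-2; r}) \geq \cdots \geq \lambda_2(CG_{b_1, b-b_1; r}),
\end{align}
which proves the weak inequality.

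For the equality analysis, the sufficient direction is immediate: if $b_1 = 1$ or $b_1 = b-1$ then $CG_{b_1, b-b_1; r}$ and $CG_{1, b-1; r}$ are isomorphic and so share the same $\lambda_2$. For the necessary direction, suppose $2 \leq b_1 \leq b-2$, which in particular forces $b \geq 4$. After the symmetry reduction I may assume $b_1 \leq b/2$, so the chain above contains at least the initial step $(1, b-1) \to (2, b-2)$. Since $b \geq 4$ gives $1 < (b-1) - 1$, this step satisfies the hypothesis of the preceding strict lemma and is therefore strictly decreasing; hence the whole chain is strict, as required.

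I do not anticipate a real obstacle here — this corollary is essentially a packaging of the two preceding lemmas, and the only bookkeeping is the symmetry reduction when $b_1 > b/2$.
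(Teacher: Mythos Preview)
Your proposal is correct and matches the paper's intent: the paper states this corollary without proof, leaving it as the evident consequence of the preceding weak and strict monotonicity results, and your argument is exactly the natural unpacking of that implication (symmetry reduction, then the monotone chain, with the first step strict when $b\geq 4$).
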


In the following we give a lower bound of $\lambda_2(CG_{1, b-1; r})$.

\begin{lemma}
    Let $b \geq 2$ be an integer. 
    Then
    \begin{align}
        \Phi(Q_{1, b - 1; r}) - \Phi(Q_{1, 1; r}) &= (b-2)((1-x)\Phi(B_r) + \Phi(P_r)) \Phi(P_r).
    \end{align}
\end{lemma}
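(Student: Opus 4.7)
The plan is to prove this identity by direct algebraic manipulation, exploiting the explicit formula for $\Phi(Q_{b_1,b_2;r})$ established earlier in the excerpt together with the recursion $\Phi(P_r)=\Phi(B_r)+\Phi(B_{r-1})$.

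First I would substitute $b_1=1,\,b_2=b-1$ into the formula
\begin{align*}
\Phi(Q_{b_1,b_2;r}) &= (x^2-(b_1+b_2+2)x+b_1b_2+b_1+b_2)\Phi(B_r)^2 \\
&\quad -((b_1+b_2)x-2b_1b_2-b_1-b_2)\Phi(B_r)\Phi(B_{r-1}) \\
&\quad +b_1b_2\Phi(B_{r-1})^2,
\end{align*}
yielding a polynomial in $\Phi(B_r),\Phi(B_{r-1})$ with coefficients $(x^2-(b+2)x+2b-1),\,-(bx-3b+2),\,b-1$. Then I would do the same substitution with $b_1=b_2=1$, giving coefficients $(x^2-4x+3),\,-(2x-4),\,1$. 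Subtracting term by term, the $b$-dependent coefficients all carry a factor of $b-2$, so
\[
\Phi(Q_{1,b-1;r})-\Phi(Q_{1,1;r}) = (b-2)\bigl[(2-x)\Phi(B_r)^2-(x-3)\Phi(B_r)\Phi(B_{r-1})+\Phi(B_{r-1})^2\bigr].
\]

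Next I would simplify the target right-hand side using $\Phi(P_r)=\Phi(B_r)+\Phi(B_{r-1})$, which is the identity $\Phi(B_r)+\Phi(B_{r-1})=\Phi(P_r)$ proved as a corollary above. This gives
\[
(1-x)\Phi(B_r)+\Phi(P_r) = (2-x)\Phi(B_r)+\Phi(B_{r-1}),
\]
so that
\[
\bigl((1-x)\Phi(B_r)+\Phi(P_r)\bigr)\Phi(P_r) = \bigl((2-x)\Phi(B_r)+\Phi(B_{r-1})\bigr)\bigl(\Phi(B_r)+\Phi(B_{r-1})\bigr).
\]
Expanding the product yields $(2-x)\Phi(B_r)^2+(3-x)\Phi(B_r)\Phi(B_{r-1})+\Phi(B_{r-1})^2$, which agrees exactly with the bracket in my difference calculation. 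Multiplying by $(b-2)$ matches the desired formula.

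The argument is a routine computation with no real obstacle; the only step that requires care is keeping track of signs in the subtraction, and recognizing that the natural way to unify $(2-x)\Phi(B_r)+\Phi(B_{r-1})$ is to split off $(1-x)\Phi(B_r)$ and apply the identity $\Phi(P_r)=\Phi(B_r)+\Phi(B_{r-1})$ to the remaining $\Phi(B_r)+\Phi(B_{r-1})$. This rewriting is presumably what motivates the particular factored form stated in the lemma, which will be used later to compare $\lambda_2(CG_{1,b-1;r})$ with $\lambda_2(CG_{1,1;r})=\lambda_2(P_{2r+2})$.
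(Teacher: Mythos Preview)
Your proposal is correct and follows essentially the same approach as the paper: specialize the general formula for $\Phi(Q_{b_1,b_2;r})$ at $(1,b-1)$ and $(1,1)$, subtract to extract the factor $(b-2)$, and then use the identity $\Phi(P_r)=\Phi(B_r)+\Phi(B_{r-1})$ to rewrite the bracket in the stated factored form. The only cosmetic difference is that the paper factors the bracket forward into $\Phi(P_r)$ while you expand the target backward and match; the computations are identical.
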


\begin{proof}
    Note that 
    \begin{align}
        \Phi(Q_{1, b - 1; r}) =& (x^2 - (b + 2)x + 2b-1) \Phi(B_{r}) \Phi(B_r) \\
        &-(bx - 3b + 2) \Phi(B_r) \Phi(B_{r-1}) \\
        &+ (b-1) \Phi(B_{r-1}) \Phi(B_{r-1}).
    \end{align}
    Hence,
    \begin{align}
        \Phi(Q_{1, b - 1; r}) - \Phi(Q_{1, 1; r}) =& (b-2)(-(x-2) \Phi(B_r) \Phi(B_r) \\
        &-(x-3) \Phi(B_r) \Phi(B_{r-1}) + \Phi(B_{r-1}) \Phi(B_{r-1})) \\
        =& (b-2)( (2-x) \Phi(B_r) \Phi(P_r) + \Phi(B_{r-1}) \Phi(P_r)) \\
        =& (b-2)( (1-x) \Phi(B_r) + \Phi(P_r)) \Phi(P_r).  \qedhere
    \end{align}
\end{proof}

\begin{lemma}
    Let $b \geq 3$ be an integer. 
    Then
    $\lambda_2(CG_{1, b-1; r}) > \lambda_2(P_{2r+3})$. 
\end{lemma}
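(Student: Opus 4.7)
The plan is to start from the identity established in the preceding lemma and reduce the claim to sign bookkeeping at $\lambda^{\star} := \lambda_2(P_{2r+3}) = 4\sin^2\frac{\pi}{4r+6}$. Since that preceding lemma already gives a closed form for $\Phi(Q_{1, b-1; r}) - \Phi(Q_{1, 1; r})$, and since $CG_{1,1;r} = P_{2r+2}$ (so $\Phi(Q_{1,1;r}) = \Phi(P_{2r+2})$), the whole argument reduces to evaluating $\Phi(Q_{1, b-1; r})$ at $\lambda^{\star}$ and then extending the sign information to the full interval $(0, \lambda^{\star}]$.

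For the computational step, I would use the recursions $x\Phi(B_n) = \Phi(P_{n+1}) + \Phi(P_n)$ together with $\Phi(P_{n+2}) = (x-2)\Phi(P_{n+1}) - \Phi(P_n)$ to derive the simplification
\begin{align}
    (1-x)\Phi(B_r, x) + \Phi(P_r, x) = -\Phi(B_{r+1}, x),
\end{align}
and therefore
\begin{align}
    \Phi(Q_{1, b-1; r}, x) = \Phi(P_{2r+2}, x) - (b-2)\,\Phi(B_{r+1}, x)\,\Phi(P_r, x).
\end{align}
The key observation is that $\lambda^{\star}$ is the smallest root of $\Phi(B_{r+1})$, whose roots are $4\sin^2\frac{(2i-1)\pi}{4r+6}$ for $i = 1, \ldots, r+1$. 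In particular $\Phi(B_{r+1}, \lambda^{\star}) = 0$, so $\Phi(Q_{1, b-1; r}, \lambda^{\star}) = \Phi(P_{2r+2}, \lambda^{\star})$, which is strictly negative since $\lambda^{\star} < \lambda_2(P_{2r+2}) = 4\sin^2\frac{\pi}{4r+4}$.

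To upgrade this into $\mu_2(Q_{1, b-1; r}) > \lambda^{\star}$, i.e.\ to rule out the possibility that $\lambda^{\star}$ lies in some higher interval between consecutive eigenvalues on which $\Phi$ is also negative, I would check that $\Phi(Q_{1, b-1; r}, x) < 0$ on the \emph{entire} interval $(0, \lambda^{\star}]$. Since $\lambda^{\star}$ sits strictly below the smallest positive root of each of $\Phi(P_{2r+2})$, $\Phi(B_{r+1})$ and $\Phi(P_r)$, each factor has a constant sign on $(0, \lambda^{\star})$, which one reads off at $x = 0^{+}$: using that $B_n$ is positive definite and that $L(P_n)$ has a simple zero eigenvalue, these signs are $-1$, $(-1)^{r+1}$, and $(-1)^{r-1}$ respectively. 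Consequently $\Phi(B_{r+1}, x)\Phi(P_r, x) > 0$ while $\Phi(P_{2r+2}, x) < 0$ on $(0, \lambda^{\star})$, so for $b \geq 3$ both summands are negative; at $x = \lambda^{\star}$ the second summand vanishes and the first remains negative. Hence $\Phi(Q_{1, b-1; r}, x) < 0$ throughout $(0, \lambda^{\star}]$, so the smallest positive root $\mu_2(Q_{1, b-1; r}) = \lambda_2(CG_{1, b-1; r})$ strictly exceeds $\lambda^{\star}$. The only non-routine step is spotting the identity $(1-x)\Phi(B_r) + \Phi(P_r) = -\Phi(B_{r+1})$, which makes $\lambda^{\star}$ manifestly a zero of the $(b-2)$ correction term and causes the rest of the argument to collapse into elementary sign tracking.
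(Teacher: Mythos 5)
Your proposal is correct and follows essentially the same route as the paper: both evaluate $\Phi(Q_{1,b-1;r})$ on $(0,\lambda_2(P_{2r+3})]$ via the decomposition $\Phi(Q_{1,b-1;r}) = \Phi(P_{2r+2}) + (b-2)\bigl((1-x)\Phi(B_r)+\Phi(P_r)\bigr)\Phi(P_r)$ and conclude that the second root exceeds $\lambda_2(P_{2r+3})$. The one genuine improvement is your identity $(1-x)\Phi(B_r)+\Phi(P_r) = -\Phi(B_{r+1})$, which makes the sign of the correction term transparent (the two terms $(1-x)\Phi(B_r)$ and $\Phi(P_r)$ have opposite signs on this interval, so the sign of their sum is not immediate); the paper simply asserts the strict inequality $\bigl((1-x)\Phi(B_r)+\Phi(P_r)\bigr)\Phi(P_r)<0$ up to and including the endpoint, where your identity shows it in fact vanishes (since $\lambda_2(P_{2r+3})$ is the smallest root of $\Phi(B_{r+1})$) — harmless for the conclusion, but your version is the rigorous one.
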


\begin{proof}
    Note that $Q_{1,1, r} = L(P_{2r+2})$. 
    It is clear that $\Phi(Q_{1, b - 1; r}, 0) = \Phi(Q_{1, 1; r}, 0) = 0$. 
    Consider $0 < \theta \leq \lambda_2(P_{2r+3}) < \lambda_2(P_{2r+1}) = \lambda_2(B_r)$. 
    We have $\Phi(Q_{1,1; r}, \theta) < 0$. 
    Meanwhile, $( (1-x) \Phi(B_r, \theta) + \Phi(P_r, \theta)) \Phi(P_r, \theta) < 0$. 
    Therefore, $\Phi(Q_{1, b-1; r}, \theta) < 0$. 
    This implies that $\lambda_2(Q_{1, b-1; r}) > \lambda_2(P_{2r+3})$.
    So $\lambda_2(CG_{1, b-1; r}) > \lambda_2(P_{2r+3})$.
\end{proof}

\subsubsection{Laplacian eigenvalues of the spider graph}

In this subsection we determine the Laplacian eigenvalues of $L(Sp_{b ; r})$. 

Note that we have an equitable partition of $Sp_{b ; r}$, namely the parts $V_{u, i}$ with $i = 0,1, \ldots, r$ given by
\begin{align}
    V_{u, 0} &= \set{u_0}, \\
    V_{u, i} &= \set{u_{i, 1}, \ldots, u_{i, b}}, 1 \leq i \leq r. 
\end{align} 

Let $\RR^V$ be the space of all real functions on $V$. 
Let $W$ be the subspace which takes constant value in each parts. 
We first determine the eigenspace in $W^\perp$. 
It is obtained by lifting some eigenfunctions of $P_{2r+1}$ to $W^\perp$. 

\begin{lemma}
    Let $b \geq 2$ be an integer. 
    Let $\lambda = \lambda_{2i}(P_{2r+1})$ be the $(2i)$-th eigenvalue of $L(P_{2r+1})$ with $i = 1,2, \ldots, r$. 
    Then $W^\perp$ contains a $\lambda$-eigenspace of $L(Sp_{b ; r})$ with dimension $b - 1$.
\end{lemma}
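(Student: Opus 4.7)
The plan is to mirror the construction used for the crab graph in \cref{lem:writer}: lift those eigenfunctions of $P_{2r+1}$ that vanish at the central vertex, embedding them into pairs of branches of the spider so that the center of the path coincides with $u_0$. For each $m = 1, 2, \ldots, b-1$, consider the embedded path
\begin{align}
    u_{m, r} \sim u_{m, r-1} \sim \cdots \sim u_{m, 1} \sim u_0 \sim u_{m+1, 1} \sim \cdots \sim u_{m+1, r},
\end{align}
which is a copy of $P_{2r+1}$ inside $Sp_{b; r}$ with $u_0$ sitting at the middle position $w_{r+1}$. Exactly as in the proof of \cref{lem:writer}, for $\lambda = \lambda_{2i}(P_{2r+1})$ with $i = 1, \ldots, r$, the corresponding $\lambda$-eigenfunction $\xi$ of $L(P_{2r+1})$ is antisymmetric about $w_{r+1}$, hence $\xi(w_{r+1}) = 0$ and $\xi(w_{r+1-j}) = -\xi(w_{r+1+j})$ for every $j$.

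I define $\phi_m$ on $Sp_{b; r}$ by copying $\xi$ onto the $m$-th and $(m+1)$-th branches along the path above, and setting $\phi_m$ to zero on every other vertex. Next I verify that $\phi_m$ is a $\lambda$-eigenfunction of $L(Sp_{b; r})$: on interior vertices of the embedded $P_{2r+1}$ the calculation is identical to the path Laplacian; on vertices of the other $b-2$ branches both the vertex and all its neighbors are zero; the only delicate case is the center $u_0$, where $\phi_m(u_0) = 0$ and
\begin{align}
    (L \phi_m)(u_0) = b \cdot 0 - \phi_m(u_{m, 1}) - \phi_m(u_{m+1, 1}) = -\xi(w_r) - \xi(w_{r+2}) = 0,
\end{align}
the final equality being exactly the antisymmetry that singled out the even-indexed eigenvalues.

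It remains to check that the $b-1$ functions $\phi_1, \ldots, \phi_{b-1}$ lie in $W^\perp$ and are linearly independent. Orthogonality to $W$ reduces to showing that $\sum_{v \in V_{u, i}} \phi_m(v) = 0$ for every level $i$; for $i = 0$ the sum equals $\phi_m(u_0) = 0$, while for $i \geq 1$ the only two nonzero contributions are $\phi_m(u_{i, m}) = \xi(w_{r+1-i})$ and $\phi_m(u_{i, m+1}) = \xi(w_{r+1+i})$, which cancel by antisymmetry. For independence, since $\xi$ is a nontrivial eigenfunction of $P_{2r+1}$, there exists $i^{*} \in \{1, \ldots, r\}$ with $c := \xi(w_{r+1-i^{*}}) \neq 0$. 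Restricting the functions to the level $V_{u, i^{*}}$ yields the $(b-1) \times b$ matrix whose row $m$ has entry $c$ in column $m$, entry $-c$ in column $m+1$, and zero elsewhere; its leading $(b-1) \times (b-1)$ minor is upper triangular with $c$ on the diagonal, so the rows are linearly independent, and hence so are $\phi_1, \ldots, \phi_{b-1}$.

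There is no serious obstacle: the only place the argument could stumble is the eigenfunction identity at the high-degree vertex $u_0$, but this is precisely forced by the selection $\lambda = \lambda_{2i}(P_{2r+1})$, so the construction goes through verbatim as in the crab graph case.
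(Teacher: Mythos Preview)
Your proposal is correct and follows exactly the approach the paper intends: the paper's own proof simply reads ``identical to the proof of \cref{lem:writer},'' and you have carried out precisely that transplantation of the crab-graph argument to the spider graph, with the added benefit of spelling out the eigenfunction check at the high-degree vertex $u_0$, the orthogonality to $W$, and the linear independence, none of which the paper makes explicit even in \cref{lem:writer}.
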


\begin{proof}
    The proof is identical to the proof of~\cref{lem:writer}.
\end{proof}

Next we determine the eigenfunctions in $W$.

Consider the quotient matrix $Q_{b; r}$ of $Sp_{b ; r}$ with parts $V_{u, i}$ given above. 
Define
\begin{align}
    Q_{b; r} = 
    \begin{bmatrix}
        b & -b_1 \bm{e}_1^\top \\
        -\bm{e}_1 & B_{r}
    \end{bmatrix}.
\end{align}
Then $L(CG_{b_1, b_2; r}) P = P Q_{b_1, b_2; r}$, where $P$ consists of characteristic columns given by
\begin{align}
    P = 
    \begin{bmatrix}
        \bm{1}_{V_{u, 0}} & \bm{1}_{V_{u, 1}} & \cdots & \bm{1}_{V_{u, r}}
    \end{bmatrix}.
\end{align}

\begin{lemma}
    Let $\xi$ be a $\mu$-eigenfunction of $Q_{b; r}$. 
    Then $\xi$ induces a $\mu$-eigenfunction of $L(Sp_{b ; r})$ in $W$. 
\end{lemma}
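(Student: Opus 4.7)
The plan is to mimic verbatim the strategy used for the crab graph in the immediately preceding subsection, since the spider graph is a simpler structure and the same equitable-partition machinery applies directly.

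First, I would verify (or simply cite as a direct computation) that the partition $V_{u,0}, V_{u,1}, \ldots, V_{u,r}$ is equitable for $L(Sp_{b;r})$, which is captured by the matrix identity $L(Sp_{b;r})\, P = P\, Q_{b;r}$. This is essentially the definition of the quotient matrix: the center $u_0$ has degree $b$ and is connected to each of the $b$ vertices of $V_{u,1}$, giving the first row $(b, -b, 0, \ldots, 0)$ of $Q_{b;r}$; each vertex in $V_{u,i}$ for $1 \leq i \leq r-1$ has degree $2$ with one neighbor in $V_{u,i-1}$ and one in $V_{u,i+1}$, matching the tridiagonal block $B_r$; and each leaf in $V_{u,r}$ has degree $1$, matching the last diagonal entry of $B_r$.

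Given this relation, the lemma is immediate: if $Q_{b;r}\xi = \mu\xi$, then
\begin{align}
    L(Sp_{b;r})(P\xi) \;=\; (L(Sp_{b;r})\, P)\,\xi \;=\; P\,Q_{b;r}\,\xi \;=\; \mu\,(P\xi).
\end{align}
Since $P\xi$ is by construction a linear combination of the characteristic vectors $\bm{1}_{V_{u,i}}$, it takes a constant value on each part and hence lies in $W$. Provided $\xi$ is nonzero, so is $P\xi$ (the columns of $P$ have disjoint supports and are therefore linearly independent), so it is a genuine eigenfunction.

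There is no real obstacle here; the only thing to be careful about is stating the equitable-partition identity correctly, and the proof structure is identical to the corresponding crab-graph lemma already proved above. I would therefore simply write a two-line proof of the form ``$L(Sp_{b;r})\, P\xi = P\,Q_{b;r}\,\xi = \mu\, P\xi$,'' mirroring the earlier proof.
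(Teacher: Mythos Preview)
Your proposal is correct and matches the paper's own proof essentially verbatim: the paper's argument is the single line $L(Sp_{b;r})\,P\xi = P\,Q_{b;r}\,\xi = \mu\,P\xi$, which is exactly what you propose. Your additional remarks verifying the equitable-partition identity and the injectivity of $P$ are sound extra justification that the paper omits.
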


\begin{proof}
    Note that $L(Sp_{b; r}) P \xi = P Q_{b; r} \xi = \mu P \xi$. 
\end{proof}

Next we give a lower bound of $\lambda_2(Q_{b; r})$.

\begin{lemma}
    Let $1 \leq k \leq n$ be integers. 
    Let $P$ be a positive definite matrix of order $n$. 
    Let $S$ be a positive semidefinite matrix of order $n$. 
    Then $\mu_k(PS) \geq \mu_1(P) \mu_k(S)$.
\end{lemma}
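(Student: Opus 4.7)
The plan is to reduce the non-symmetric problem to a symmetric one via a similarity transformation and then apply the Courant--Fischer max--min principle. Since $P$ is positive definite, the positive-definite square root $P^{1/2}$ exists, and the identity $PS = P^{1/2}(P^{1/2} S P^{1/2}) P^{-1/2}$ exhibits $PS$ as similar to the symmetric positive semidefinite matrix $P^{1/2} S P^{1/2}$. In particular, every eigenvalue of $PS$ is real and non-negative, and $\mu_k(PS) = \mu_k(P^{1/2} S P^{1/2})$.

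Next I would invoke the max--min characterization of the $k$-th smallest eigenvalue,
\begin{align}
    \mu_k(P^{1/2} S P^{1/2}) = \max_{\dim V = n-k+1} \ \min_{0 \neq x \in V} \frac{x^\top P^{1/2} S P^{1/2} x}{x^\top x}.
\end{align}
Substituting $y = P^{1/2} x$ (equivalently $x = P^{-1/2} y$) and using that $V \mapsto P^{1/2} V$ is a dimension-preserving bijection on subspaces (because $P^{1/2}$ is invertible), the right-hand side becomes
\begin{align}
    \max_{\dim U = n-k+1} \ \min_{0 \neq y \in U} \frac{y^\top S y}{y^\top P^{-1} y}.
\end{align}

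The final step is a one-line Rayleigh-quotient bound on the denominator: since $P^{-1}$ is positive definite with largest eigenvalue $1/\mu_1(P)$, we have $y^\top P^{-1} y \leq y^\top y / \mu_1(P)$ for every $y$, so
\begin{align}
    \frac{y^\top S y}{y^\top P^{-1} y} \ \geq \ \mu_1(P)\, \frac{y^\top S y}{y^\top y}.
\end{align}
Taking $\min$ over $0 \neq y \in U$ and then $\max$ over $U$ with $\dim U = n-k+1$, the right-hand side becomes $\mu_1(P)\,\mu_k(S)$, yielding $\mu_k(PS) \geq \mu_1(P)\,\mu_k(S)$.

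There is no real obstacle to the argument: the whole proof consists of the similarity observation and the elementary estimate on $y^\top P^{-1} y$. The only point that requires a moment's care is confirming that $V \mapsto P^{1/2} V$ preserves dimension and ranges over all $(n-k+1)$-dimensional subspaces, which is immediate from the invertibility of $P^{1/2}$.
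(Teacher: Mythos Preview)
Your proof is correct and follows essentially the same route as the paper: reduce to the symmetric matrix $P^{1/2}SP^{1/2}$ via similarity, then apply Courant--Fischer together with the Rayleigh-quotient bound for $P$. The only cosmetic difference is that the paper uses the min--max form over $k$-dimensional subspaces and bounds $x^\top P x \geq \mu_1(P)\,x^\top x$ directly, whereas you use the max--min form over $(n-k+1)$-dimensional subspaces and bound $y^\top P^{-1} y \leq y^\top y/\mu_1(P)$ after the substitution; the two are equivalent.
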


\begin{proof}
    \begin{align}
        \mu_k(PS) &= \mu_k(\sqrt{P} S \sqrt{P}) \\
        &= \min_{\dim U = k} \max_{0 \neq x \in U} \frac{x^\top \sqrt{P} S \sqrt{P}x}{x^\top x} \\
        &= \min_{\dim U = k} \max_{0 \neq x \in U} \frac{(\sqrt{P}x)^\top S (\sqrt{P}x)}{(\sqrt{P}x)^\top (\sqrt{P}x)} \cdot \frac{x^\top P x}{x^\top x} \\
        &\geq \mu_1(P) \min_{\dim U = k} \max_{0 \neq x \in U} \frac{(\sqrt{P}x)^\top S (\sqrt{P}x)}{(\sqrt{P}x)^\top (\sqrt{P}x)} \\
        &= \mu_1(P) \mu_k(S). \qedhere
    \end{align}
\end{proof}

\begin{corollary}
    Let $b \geq 2$ be an integer. 
    Then 
    $\mu_2(Q_{b; r}) \geq \mu_2(Q_{1; r}) = \lambda_2(P_{r+1})$. 
\end{corollary}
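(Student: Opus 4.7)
The plan is to factor $Q_{b;r}$ as a product of a positive definite diagonal matrix and the full Laplacian $L(P_{r+1})$, and then invoke the preceding lemma directly. The key preliminary observation is that setting $b = 1$ in the block description
\begin{align}
    Q_{b;r} = \begin{bmatrix} b & -b\bm{e}_1^\top \\ -\bm{e}_1 & B_r \end{bmatrix}
\end{align}
recovers exactly $L(P_{r+1})$, because $B_r$ is by definition obtained by deleting one endpoint from $L(P_{r+1})$, and the top block $[1, -\bm{e}_1^\top]$ paired with the leftmost column $[1; -\bm{e}_1]$ reintroduces that endpoint. Hence $Q_{1;r} = L(P_{r+1})$, and in particular $\mu_2(Q_{1;r}) = \lambda_2(P_{r+1})$.

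Next, I would verify the factorization $Q_{b;r} = P \cdot L(P_{r+1})$ where $P = \diag(b, 1, 1, \ldots, 1)$ is of order $r+1$. Left-multiplication by $P$ scales only the first row of $L(P_{r+1})$ by $b$, turning $(1, -1, 0, \ldots, 0)$ into $(b, -b, 0, \ldots, 0)$ and leaving every other row unchanged. Reading off the block form of $Q_{b;r}$, its first row is precisely $(b, -b, 0, \ldots, 0)$, and rows $1$ through $r$ coincide with the rows of $L(P_{r+1})$ below the top. So the factorization is a line of matrix arithmetic.

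Finally, the matrix $P$ is positive definite with smallest eigenvalue $\mu_1(P) = \min(b, 1) = 1$ (since $b \geq 2$), and $L(P_{r+1})$ is positive semidefinite. Applying the preceding lemma with $k = 2$ gives
\begin{align}
    \mu_2(Q_{b;r}) = \mu_2\bigl(P \cdot L(P_{r+1})\bigr) \geq \mu_1(P) \cdot \mu_2\bigl(L(P_{r+1})\bigr) = 1 \cdot \lambda_2(P_{r+1}),
\end{align}
which is the claimed inequality. The apparent issue that $Q_{b;r}$ is not symmetric is already absorbed inside the preceding lemma via the similarity $PS \sim \sqrt{P}\,S\,\sqrt{P}$, so there is no genuine obstacle; the main content is just recognizing the factorization.
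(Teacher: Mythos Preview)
Your proposal is correct and matches the paper's own argument essentially line for line: the paper's proof simply records the factorization $Q_{b;r} = \Lambda\, Q_{1;r}$ with $\Lambda = \diag(b,1,\ldots,1)$ and appeals to the preceding lemma, which is exactly what you do (with the extra, and helpful, explicit verification that $Q_{1;r} = L(P_{r+1})$ and $\mu_1(\Lambda)=1$).
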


\begin{proof}
    Note that $Q_{b; r} = \Lambda Q_{1; r}$ where $\Lambda$ is the diagonal matrix $\diag(b, 1, \ldots, 1)$. 
\end{proof}

The second-smallest Laplacian eigenvalue of $Sp_{b; r}$ must come from $P_{2r+1}$.

\begin{lemma}
    Let $b \geq 2, r \geq 1$ be integers. 
    Then $\lambda_2(Sp_{b ; r}) = \lambda_2(P_{2r+1}) = 4 \sin^2 \frac{\pi}{4r+2}$.
\end{lemma}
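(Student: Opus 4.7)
The plan is to assemble the proof from the spectral decomposition developed in this subsection, exactly in parallel with the analogous lemma for the crab graph. First, I would observe that $\RR^V = W \oplus W^\perp$ together with the preceding two lemmas accounts for the entire Laplacian spectrum of $Sp_{b;r}$: the subspace $W^\perp$ contains the eigenvalues $\lambda_{2i}(P_{2r+1})$ for $i = 1, 2, \ldots, r$, each with multiplicity at least $b-1$, while the subspace $W$ contributes the $r+1$ eigenvalues of the quotient matrix $Q_{b;r}$. A dimension count gives $r(b-1) + (r+1) = br + 1 = \card{V(Sp_{b;r})}$, so every eigenvalue is captured and the multiplicities coming from $W^\perp$ are exactly $b-1$.

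Next, since the constant function lies in $W$ and the Laplacian of a connected graph has $0$ as a simple eigenvalue, the second-smallest Laplacian eigenvalue is the minimum of the smallest positive contribution from each subspace, namely
\begin{align}
    \lambda_2(Sp_{b;r}) = \min\set{\mu_2(Q_{b;r}),\ \lambda_2(P_{2r+1})}.
\end{align}
The $W^\perp$ contribution is $\lambda_2(P_{2r+1}) = 4\sin^2 \frac{\pi}{4r+2}$ by the standard closed-form spectrum of the path. The preceding corollary gives the lower bound $\mu_2(Q_{b;r}) \geq \lambda_2(P_{r+1}) = 4\sin^2 \frac{\pi}{2r+2}$, obtained from the positive-definite comparison $Q_{b;r} = \Lambda Q_{1;r}$ with $\Lambda = \diag(b, 1, \ldots, 1)$.

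Finally, I would compare the two candidates. For $r \geq 1$ we have $0 < \frac{\pi}{4r+2} < \frac{\pi}{2r+2} \leq \frac{\pi}{4}$, so by strict monotonicity of $\sin$ on $[0, \pi/2]$ it follows that $\lambda_2(P_{2r+1}) < \lambda_2(P_{r+1}) \leq \mu_2(Q_{b;r})$. Hence the minimum in the displayed formula is realised by the $W^\perp$ term, yielding $\lambda_2(Sp_{b;r}) = \lambda_2(P_{2r+1}) = 4\sin^2 \frac{\pi}{4r+2}$. I do not foresee any real obstacle here: the genuinely substantive step, the lower bound on $\mu_2(Q_{b;r})$, has already been proved, and what remains is bookkeeping together with an elementary monotonicity comparison.
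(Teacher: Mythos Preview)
Your proposal is correct and follows essentially the same route as the paper: a dimension count showing the spectrum splits between $W^\perp$ (yielding the even-indexed eigenvalues of $P_{2r+1}$ with multiplicity $b-1$) and $W$ (yielding the spectrum of $Q_{b;r}$), followed by the comparison $\mu_2(Q_{b;r}) \geq \lambda_2(P_{r+1}) > \lambda_2(P_{2r+1})$. The only difference is that you spell out the inequality $\lambda_2(P_{2r+1}) < \lambda_2(P_{r+1})$ via the explicit sine formula, whereas the paper just asserts it.
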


\begin{proof}
    Note that $W^\perp$ contains $r(b - 1)$ linearly independent eigenfunctions, and $W$ contains $r+1$ linearly independent eigenfunctions. 
    In total, we have $rb + 1$ linearly independent eigenfunctions. 
    So we have determined all eigenvalues, which comes from eigenvalues of $L(P_{2r+1})$ and $Q_{b; r}$. 
    Since $\lambda_2(Q_{b; r}) \geq \lambda_2(P_{r+1}) > \lambda_2(P_{2r+1})$, we have $\lambda_2(Sp_{b ; r}) = \lambda_2(P_{2r+1})$. 
\end{proof}

\subsubsection{Proofs for the maximum algebraic connectivity}

\begin{proof}[Proof of~\cref{thm:ranch}]
    Let $T$ be a tree with $n$ vertices and matching number $m$. 
    If $T$ contains a path of length $\ell$, then the matching number of $T$ is at least $\ceil{\ell/2}$. 
    Therefore, if the matching number of a tree is $m$, then the diameter $D$ of the tree is at most $2m$. 

    If $m = 1$, then the diameter is at most $2$. 
    The only tree with diameter $1$ is the star graph $S_2$. 
    Every tree with diameter $2$ is a star graph. 
    It is clear that the second Laplacian eigenvalue of a star graph is $1$. 

    If $m = 2$, then the diameter is at most $4$. 
    If the diameter of $T$ is $4$, then it contains the path graph $P_5$ as a subtree. 
    Then by~\cref{lem:naturally} $\lambda_2(T) \leq \lambda_2(P_5) < \lambda_2(CG_{1, n-3; 1})$. 
    If the diameter of $T$ is $3$, then $T$ is a crab graph $CG_{p,q;1}$ with $p + q = n-2$. 
    Then $\lambda_2(T) = \lambda_2(CG_{p, q; 1}) \leq \lambda_2(CG_{1, n-3; 1})$. 
    The equality holds if and only if $T \cong CG_{1, n-3; 1}$. 
    If the diameter of $T$ is $2$, then it is a star graph and the matching number is $1$, contradiction. 

    Suppose $m \geq 3$. 
    Note that $\lambda_2(Sp_{m-1,n-2m+1;2,1}) \geq \lambda_2(Sp_{n-m; 2}) = \lambda_2(P_{5})$ as $n-m \geq 2$.
    If the diameter of $T$ is at least $5$, then it contains the path graph $P_6$ as a subtree. 
    Then by~\cref{lem:naturally} $\lambda_2(T) \leq \lambda_2(P_6) < \lambda_2(P_5)$. 
    If the diameter of $T$ is $4$. 
    If $T$ contains the spider graph $Sp_{1, 2; 3, 1}$ as a subtree, then by~\cref{lem:naturally} we have $\lambda_2(T) \leq \lambda_2(Sp_{1,2;3,1}) \approx 0.32 < \lambda_2(P_5) \approx 0.38$. 
    Suppose $T$ does not contain the spider graph $Sp_{1,2; 3, 1}$ as a subtree.
    Let $u$ and $v$ be two vertices of distance $4$. 
    This implies that neither of $u$ and $v$ is a twin vertex. 
    So the tree $T$ must be a spider graph $Sp_{p_1,p_2; 2, 1}$ with $2p_1+p_2+1 = n$ and $p_1 \geq 2$. 
    Since the matching number is $m$, it can only be the spider graph $Sp_{m-1, n-2m+1;2,1}$. 
    If the diameter of $T$ is at most $3$, then the matching number is at most $2$, contradiction. 
\end{proof}

\begin{proof}[{Proof of~\cref{thm:unit}}]
    Let $T$ be a tree with $b$ leaves and maximum matching number $m$. 

    If $m = 1$, then $T$ is a star and $\lambda_2(T) = 1$. 

    If $m = 2$, then $T$ is a crab graph $CG_{b_1, b - b_1; 1}$ for some $b_1 = 1,2,\ldots, b-1$. 
    We have $\lambda_2(T) = \lambda_2(CG_{b_1, b-b_1; 1}) \leq \lambda_2(CG_{1, b-1; 1})$. 
    The equality holds if and only if $T$ is the crab graph $CG_{1, b-1; 1}$.

    If $m \geq 3$ and $b = 2$, then $T$ is the path graph $P_{2m}$ or $P_{2m+1}$. 
    We have $\sigma_2(T) = \lambda_2(P_{2m})$ or $\sigma_2(T) = \lambda_2(P_{2m+1})$. 

    Suppose $b \geq 3$ and $m = br+1, r \in \ZZ_+$. 
    By~\cref{lem:visitor}, the diameter of $T$ is at least $4r+1$. 
    If the diameter of $T$ is at least $4r+2$, then $\lambda_2(T) \leq \lambda_2(P_{4r+3}) < \lambda_2(CG_{1, b-1; 2r})$. 
    If the diameter of $T$ is exactly $4r+1$, then by~\cref{lem:recent} the tree $T$ is a crab graph $CG_{b_1, b-b_1; 2r}$ for some positive integer $b_1 = 1,2,\ldots, b-1$. 
    So $\lambda_2(T) = \lambda_2(CG_{b_1, b-b_1; 2r}) \leq \lambda_2(CG_{1, b-1; 2r})$. 
    The equality holds if and only if $T$ is the crab graph $CG_{1, b-1; 2r}$.
    
    Suppose $b \geq 3$ and $m = br+2, r \in \ZZ_+$. 
    By~\cref{lem:surface}, the diameter of $T$ is at least $4r+3$. 
    So $\lambda_2(T) \leq \lambda_2(P_{4r+4})$.

    Suppose $b \geq 3$ and $m = br+s, 3 \leq s \leq b, r \in \ZZ_+$. 
    By~\cref{lem:material}, the diameter of $T$ is at least $4r+4$. 
    So $\lambda_2(T) \leq \lambda_2(P_{4r+5})$. 
\end{proof}

\section*{Acknowledgements}
Huiqiu Lin was supported by the National Natural Science Foundation of China (No. 12271162, No. 12326372), and the Natural Science Foundation of Shanghai (No. 22ZR1416300 and No. 23JC1401500) and The Program for Professor of Special Appointment (Eastern Scholar) at Shanghai Institutions of Higher Learning (No. TP2022031). 
Da Zhao was supported in part by the National Natural Science Foundation of China (No. 12471324), the Natural Science Foundation of Shanghai, Shanghai Sailing Program (No. 24YF2709000), and the Fundamental Research Funds for the Central Universities.

\bibliographystyle{plain}
\bibliography{ref}

\end{document}